\theoremstyle{definition}
\newtheorem{thm}[equation]{Theorem}
\newtheorem{theorem}[equation]{Theorem}
\newtheorem*{theorem*}{Theorem}
\newtheorem*{com*}{Comments}
\newtheorem{corollary}[equation]{Corollary}
\newtheorem{lemma}[equation]{Lemma}
\newtheorem{proposition}[equation]{Proposition}
\newtheorem*{ack*}{Acknowlegements}
      \newtheorem{defi}[equation]{Definition}
      \newtheorem{remark}[equation]{Remark}
      \newtheorem{example}[equation]{Example}
      \numberwithin{equation}{section}
\newcommand{\nc}{\newcommand}
\newcommand{\adj}[4]{#1\negmedspace: #2\rightleftarrows #3:\negmedspace #4}
\nc{\DMO}{\DeclareMathOperator}
\newcommand{\bigslant}[2]{{\raisebox{.2em}{$#1$}\left/\raisebox{-.2em}{$#2$}\right.}}	
\nc{\Cob}{\mathsf{Cob}}
\nc{\cob}{\mathsf{Cob}}
\nc{\lag}{\mathsf{Lag}}
\nc{\fun}{\mathsf{Fun}}
\nc{\cat}{\mathsf{Cat}}
\nc{\vect}{\mathsf{Vect}}
\nc{\sets}{\mathsf{Sets}}
\nc{\symp}{\mathsf{Symp}}
\nc{\corr}{\mathsf{Corr}}
\nc{\fuk}{\mathsf{Fukaya}}
\nc{\chain}{\mathsf{Chain}}
\nc{\coder}{\mathsf{Coder}}
\nc{\ssets}{\mathsf{sSets}}
\nc{\cmpct}{\mathsf{cmpct}}
\nc{\grmod}{\mathsf{GrMod}}
\nc{\dbcoh}{D^b\mathsf{Coh}}
\nc{\fukaya}{\mathsf{Fukaya}}
\nc{\spaces}{\mathsf{Spaces}}
\nc{\corres}{\mathsf{Corres}}
\nc{\inftycat}{\infty\mathsf{Cat}}
\DMO{\conf}{Conf}
\DMO{\chains}{Chains}
\DMO{\cochains}{Cochains}
\DMO{\cone}{Cone}
\DMO{\ran}{Ran}
\DMO{\leg}{Leg}
\DMO{\cube}{Cube}
\DMO{\floer}{Floer}
\DMO{\holomaps}{Holomaps}
\DMO{\maps}{Maps}
\DMO{\Decomp}{Decomp}
\DMO{\decomp}{Decomp}
\DMO{\yoneda}{Yoneda}
\DMO{\strict}{strict}
\DMO{\comp}{Comp}
\DMO{\crit}{Crit}
\DMO{\test}{{test}}
\DMO{\sign}{sign}
\DMO{\topp}{top}
\DMO{\indx}{Index}
\DMO{\Break}{Break} 
\DMO{\zero}{zero} 
\DMO{\ob}{Ob}
\DMO{\gr}{Gr} 
\DMO{\Gr}{Gr} 
\DMO{\cl}{Cl} 
\DMO{\grlag}{GrLag}
\DMO{\Pin}{Pin}
\DMO{\Graph}{Graph}
\DMO{\pin}{Pin}
\DMO{\gap}{Gap}
\DMO{\Ex}{Ex}
\DMO{\id}{id}
\DMO{\End}{End}
\DMO{\sym}{Sym} 
\DMO{\aut}{Aut}
\DMO{\DK}{DK} 
\DMO{\poly}{poly} 
\DMO{\diff}{Diff} 
\DMO{\dist}{dist} 
\DMO{\coker}{coker} 
\nc{\kernel}{\ker} 
\DMO{\sspan}{span}
\DMO{\hocolim}{hocolim}	
\DMO{\holim}{holim}
\DMO{\sk}{sk}
\nc{\xto}{\xrightarrow}
\nc{\xra}{\xto}
\nc{\tensor}{\otimes}
\nc{\del}{\partial}
\nc{\delbar}{\overline{\del}}
\nc{\dd}{\diamond}
\nc{\tri}{\triangle}
\nc{\bb}{\Box}
\nc{\into}{\hookrightarrow}
\nc{\contains}{\supset}
\nc{\trbar}{\overline{T^*\RR}}
\nc{\tsa}{Ts\cA}
\nc{\tsb}{Ts\cB}
\nc{\Ainf}{\mathcal{A}_{\infty}}
\nc{\Cinf}{\mathcal{C}_{\infty}}
\nc{\vece}{ {\vec \epsilon}}	
\nc{\vecd}{ {\vec \delta}}
\nc{\vt}{ {\vec t}}
\nc{\vx}{ {\vec x}}
\nc{\vs}{ {\vec s}}
\DMO{\op}{op}
\nc{\eqn}{\begin{equation}}
\nc{\eqnd}{\end{equation}}
\nc{\hiro}{\textcolor{blue}}
\nc{\gio}{\textcolor{green}}
\def\cA{\mathcal A}\def\cB{\mathcal B}
\def\RR{\mathbb R}
\begin{document}

\setcounter{section}{-1}
\setcounter{tocdepth}{1}

\title{$\Ainf$-functors and homotopy theory of dg-categories}

\author{Giovanni Faonte}
\address[Giovanni Faonte]{Department of Mathematics\\
        Yale University\\
       10 Hillhouse Avenue, New Haven CT 06520.}
\email[G.~Faonte]{giovanni.faonte@yale.edu}


\begin{abstract}
In this paper we prove that T\"{o}en's derived enrichment of the model category of dg-categories defined by Tabuada, is computed by the dg-category of $\Ainf$-functors. This approach was suggested by Kontsevich. We further put this construction into the framework of $(\infty,2)$-categories. Namely, we enhance the categories $dgCat$ and $\Ainf Cat$, of dg and $\Ainf$-categories, to $(\infty,2)$-categories using the nerve construction of \cite{Fao} and the $\Ainf$-formalism. We prove that the $(\infty,1)$-truncation of to the $(\infty,2)$-category of dg-categories is a model for the simplicial localization at the model structure of Tabuada. As an application, we prove that the homotopy groups of the mapping space of endomorphisms at the identity functor in the $(\infty,2)$-category of $\Ainf$-categories compute the Hochschild cohomology.
\end{abstract}

\maketitle

\tableofcontents

\section{Introduction}

Differential graded categories and $\Ainf$-categories have been subject of study in non commutative geometry and symplectic geometry. Remarkable work has been done by Drinfeld \cite{Dr} and Keller \cite{Ke3} defining dg-quotients of dg-categories. Related to this notion, is the existence of a model category $(dgCat,Tab)$ of dg-categories defined by Tabuada in \cite{Tab}. It is known that the category of dg-categories has a structure of closed symmetric monoidal category 
\medskip
\begin{equation}\label{eq1}
Hom_{dgCat}(C\otimes D,E)\cong Hom_{dgCat}(C,dgFun^{\bullet}(D,E))
\end{equation}
where $-\otimes -$ is the tensor product of dg-categories and $dgFun^{\bullet}(-,-)$ is the dg-category of dg-functors. However, as pointed out by T\"{o}en \cite{To}, the tensor product is not compatible with the model category defined in \cite{Tab}, in the sense that it does not preserve cofibrant objects. In particular the adjunction (\ref{eq1}) is not a Quillen adjunction of two variables \cite{Hov}. The tensor product can still be left derived defining a monoidal structure on the homotopy category of the model category $(dgCat,Tab)$
\medskip
\[
-\otimes^{\mathbb{L}} -: Ho(dgCat) \times Ho(dgCat_) \to Ho(dgCat)
\]
The result of T\"{o}en tells that this monoidal structure is closed.
\medskip
\begin{thm}\cite{To}
The monoidal category $(Ho(dgCat),\otimes^\mathbb{L})$ is closed. Namely, given dg-categories $D,E$, there exists an object of 
\medskip
\[
\mathbb{R}Hom(D,E)\in Ho(dgCat)
\] 
and natural isomorphisms 
\medskip
\[
Hom_{Ho(dgCat)}(C\otimes^\mathbb{L} D,E)\cong Hom_{Ho(dgCat)}(C,\mathbb{R}Hom(D,E))
\] 
\end{thm}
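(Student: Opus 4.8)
The plan is to build the internal $\mathrm{Hom}$ object by hand and to verify the adjunction directly in the homotopy category, since---as the excerpt stresses---the tensor product fails to preserve cofibrant objects, so (\ref{eq1}) is not a Quillen adjunction of two variables and closedness cannot simply be transported along it. Throughout I write $Q$ for a cofibrant replacement functor in $(dgCat,Tab)$ and compute the derived tensor product as $C\otimes^{\mathbb{L}}D = Q(C)\otimes Q(D)$. Since $-\otimes-$ sends quasi-equivalences between cofibrant dg-categories to quasi-equivalences, it suffices to establish the natural bijection when $C$ and $D$ are cofibrant, and then to check naturality.

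For the candidate internal $\mathrm{Hom}$ I would use derived Morita theory. Define $\mathbb{R}Hom(D,E)$ to be the full sub-dg-category of a cofibrant-fibrant model of the dg-category of $(D^{\op}\otimes^{\mathbb{L}} E)$-dg-modules spanned by the \emph{right quasi-representable} bimodules: those $M$ for which, for every object $d\in D$, the induced right $E$-module $M(d,-)$ is quasi-isomorphic to a representable module $E(-,x)$. This is the homotopy-coherent shadow of the elementary fact that a dg-functor $D\to E$ is the same datum as a $D$-$E$-bimodule that is representable in the $E$-variable; passing to quasi-representable objects is exactly what repairs the homotopical defect of $dgFun^{\bullet}(D,E)$.

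The adjunction is then verified through the Morita description of morphism sets. For cofibrant-fibrant dg-categories $A,B$ one has a natural identification of $Hom_{Ho(dgCat)}(A,B)$ with the set of isomorphism classes of right quasi-representable $(A^{\op}\otimes^{\mathbb{L}} B)$-modules in the derived category (the set of \emph{quasi-functors} $A\to B$). Applying this on both sides, $Hom_{Ho(dgCat)}(C\otimes^{\mathbb{L}} D, E)$ is identified with iso-classes of $(C^{\op}\otimes D^{\op}\otimes E)$-modules that are quasi-representable in the $E$-variable, while $Hom_{Ho(dgCat)}(C,\mathbb{R}Hom(D,E))$ is identified with iso-classes of $(C^{\op}\otimes \mathbb{R}Hom(D,E))$-modules that are quasi-representable; unwinding the definition of $\mathbb{R}Hom(D,E)$ expresses the latter again as $(C^{\op}\otimes D^{\op}\otimes E)$-modules carrying two representability conditions. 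The tensor-hom adjunction for dg-modules, together with the associativity isomorphism $C^{\op}\otimes(D^{\op}\otimes E)\cong (C\otimes D)^{\op}\otimes E$, then matches the two sets, and functoriality of all the module constructions gives naturality in $D$ and $E$.

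The step I expect to be the main obstacle is the compatibility of the derived tensor-hom adjunction with the quasi-representability conditions: one must show that the equivalence on derived categories of bimodules carries right quasi-representable objects to right quasi-representable objects (up to quasi-isomorphism), so that the representability constraints on the two sides genuinely correspond. Controlling the derived tensor and internal hom of bimodules homotopically---and keeping cofibrancy under control so that these derived operations are computed correctly---is precisely where the failure of (\ref{eq1}) to be homotopically well behaved must be circumvented, and is the technical heart of the argument.
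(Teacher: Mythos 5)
The paper itself offers no proof of this statement---it is quoted directly from T\"{o}en \cite{To}, together with the identification of $\mathbb{R}Hom(D,E)$ as the dg-category $Int((Mod^{\bullet}(Q(D),E))^{rqr})$ of fibrant-cofibrant right quasi-representable bimodules, which is exactly the candidate you construct. Your proposal is a faithful outline of T\"{o}en's own argument (derived Morita theory: quasi-functors as right quasi-representable bimodules, then matching representability conditions across the tensor-hom adjunction), with the one caveat that the identification of $Hom_{Ho(dgCat)}(A,B)$ with iso-classes of right quasi-representable $(A^{op}\otimes^{\mathbb{L}}B)$-modules, which you take as input, is itself the main theorem of \cite{To}, so your argument is best read as a correct reduction to that result rather than an independent proof.
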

T\"{o}en's description of the derived enrichment is rather implicit: it involves a certain dg-category of right quasi-representable dg-bimodules. Many authors \cite{Dr}, \cite{Ke1}, \cite{To} refer to a result of Kontsevich stating that the derived enrichment is given by a more explicit dg-category, $\mathcal{A}_{\infty}(D,E)$, whose objects are $\Ainf$-functors from $D$ to $E$. However, no proof of this fact can be found in the literature. The first result of this paper is a proof of this statement.
\medskip
\begin{thm}\label{th1}
Given dg-categories $D,E$, there exists natural isomorphisms in $Ho(dgCat)$ 
\[
\mathbb{R}Hom(D,E)\xrightarrow{\simeq}  \mathcal{A}_{\infty}(D,E)
\]
\end{thm}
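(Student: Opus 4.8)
The plan is to establish the equivalence by identifying both sides with a third, intermediate object and showing that the known model of $\mathbb{R}\mathrm{Hom}(D,E)$ — T\"oen's dg-category of right quasi-representable bimodules — is quasi-equivalent to $\mathcal{A}_\infty(D,E)$. The essential technical input is the fact that an $\Ainf$-functor $F\colon D\to E$ between dg-categories is the same datum as an $\Ainf$-functor $D\to E$, which by the bar construction is the same as a coalgebra map $BD\to BE$ of the corresponding dg-coalgebras, and under Koszul duality this corresponds to a dg-bimodule over $D$ and $E$ that is right quasi-representable. So the strategy is: first assemble the objects of $\mathcal{A}_\infty(D,E)$ (the $\Ainf$-functors) and show they match the right quasi-representable objects on the nose; then upgrade this bijection of objects to a quasi-equivalence of dg-categories by comparing morphism complexes.

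\emph{First}, I would recall the explicit description of the morphism complex in $\mathcal{A}_\infty(D,E)$: a degree-$k$ morphism between $\Ainf$-functors $F,G$ is a pre-natural transformation, i.e. a collection of maps $\eta_n\colon D(x_{n-1},x_n)\otimes\cdots\otimes D(x_0,x_1)\to E(Fx_0,Gx_n)$ of the appropriate degree, with the differential given by the usual $\Ainf$ formula built from the structure maps of $E$ and the components of $F$ and $G$. \emph{Second}, I would pass to the bimodule side: to each $\Ainf$-functor $F$ one associates the $E$-$D$ bimodule $M_F$ with $M_F(e,d)=E(e,Fd)$, equipped with the right $D$-action twisted through the higher components $F_n$. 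A direct computation shows $M_F$ is right quasi-representable (its restriction to each object of $D$ is representable up to quasi-isomorphism), and conversely every right quasi-representable bimodule arises this way up to the relevant homotopy. The key point is that the morphism complex $\mathcal{A}_\infty(D,E)(F,G)$ is \emph{isomorphic}, not merely quasi-isomorphic, to the bimodule-morphism complex $\mathrm{Hom}(M_F,M_G)$ — this is a formal consequence of the bar/cobar adjunction together with the identification of pre-natural transformations with bimodule maps.

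\emph{Third}, having matched both the objects and the Hom-complexes, I would invoke T\"oen's theorem (the boxed Theorem above) to conclude that $\mathcal{A}_\infty(D,E)$, being quasi-equivalent to the dg-category of right quasi-representable bimodules, represents the internal Hom $\mathbb{R}\mathrm{Hom}(D,E)$ in $Ho(dgCat)$. To make this rigorous I must check that the comparison functor $F\mapsto M_F$ is a quasi-equivalence: it is essentially surjective (up to homotopy, by the converse direction above, using that any quasi-representable bimodule is quasi-isomorphic to one of the form $M_F$) and fully faithful on the level of cohomology of Hom-complexes (from the isomorphism of complexes just established). Naturality in $D$ and $E$ follows because all constructions are functorial in the entries.

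\emph{The main obstacle} I anticipate is the essential surjectivity, i.e. showing that every right quasi-representable $E$-$D$ bimodule is quasi-isomorphic to $M_F$ for some genuine $\Ainf$-functor $F$. This is where the $\Ainf$-formalism is indispensable: one cannot in general strictify a quasi-representable bimodule to a \emph{dg}-functor, but one can always produce an $\Ainf$-functor by choosing, for each object $d\in D$, a representing object $Fd\in E$ together with a quasi-isomorphism, and then transferring the bimodule structure across these quasi-isomorphisms via the homotopy transfer theorem to obtain the higher components $F_n$ satisfying the $\Ainf$-relations. Controlling these homotopy-transfer choices coherently — and verifying that different choices yield isomorphic objects in $\mathcal{A}_\infty(D,E)$, so that the assignment is well-defined up to the required homotopy — is the genuinely delicate part of the argument; the remaining steps are essentially bookkeeping with the bar construction.
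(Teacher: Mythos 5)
Your overall strategy is the same as the paper's (match $\Ainf$-functors with right quasi-representable bimodules, then invoke T\"oen's description of $\mathbb{R}Hom$), but there is a genuine gap at the very point where you declare the argument ``formal.'' The bimodule $M_F$ you attach to an $\Ainf$-functor $F$, with right $D$-action twisted by the higher components $F_n$, is an \emph{$\Ainf$-bimodule} over $(D,E)$, not a strict dg-bimodule. T\"oen's theorem, however, identifies $\mathbb{R}Hom(D,E)$ with $Int\bigl((Mod^{\bullet}(Q(D),E))^{rqr}\bigr)$: fibrant-cofibrant \emph{strict} dg-bimodules over $Q(D)\otimes E^{op}$, where $Q(D)$ is a \emph{cofibrant replacement} of $D$. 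Your proposal compares $\mathcal{A}_{\infty}(D,E)$ against bimodules over $D$ and $E$ themselves; but strict right quasi-representable dg-bimodules over $D\otimes E^{op}$ do \emph{not} compute $\mathbb{R}Hom(D,E)$ in general --- the failure of $\otimes$ to preserve cofibrant objects is exactly why T\"oen's theorem requires the replacement $Q(D)$. Bridging the two worlds ($\Ainf$-bimodules over $(D,E)$ versus strict dg-bimodules over $(Q(D),E)$) is the actual technical content of the paper: one takes $Q(D)=U(D)=(\Omega B(\overline{D}))^{+}$, builds the twisting-cochain functors $R_{\tau}(D,E)$ and $J(D,E)$, shows they extend to dg-functors compatible with the dg-enrichments, and proves (this is Proposition \ref{pr2}) that the resulting dg-functor $\phi(D,E)$ restricts to an equivalence on right quasi-representable objects --- which itself needs an $H^0$-level equivalence of localizations plus a shift argument to upgrade to quasi-isomorphisms on all Hom complexes. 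None of this is addressed, or even flagged, in your proposal.

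A secondary but related error: your claim that $\mathcal{A}_{\infty}(D,E)(F,G)$ is \emph{isomorphic} (not merely quasi-isomorphic) to the bimodule morphism complex $Hom(M_F,M_G)$ cannot be correct as stated. A pre-natural transformation has components with inputs only from $D$, namely maps $Hom^{\bullet}_{D}(x_{n-1},x_n)\otimes\cdots\otimes Hom^{\bullet}_{D}(x_0,x_1)\to Hom^{\bullet}_{E}(Fx_0,Gx_n)$, whereas a morphism of $\Ainf$-bimodules has components with inputs from both $D$ and $E$; these are different complexes, and the correct statement (due to Lef\`evre-Hasegawa, and used in the paper for the dg-functor $z:\Ainf(D,E)\to\Cinf(D,E)$) is that the canonical comparison map is a quasi-isomorphism. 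A quasi-isomorphism would suffice for your argument, so this is repairable, but it shows the ``formal consequence of the bar/cobar adjunction'' is being asked to carry more weight than it can. By contrast, the step you single out as the main obstacle --- essential surjectivity onto right quasi-representable bimodules via homotopy transfer --- is in fact the easiest to dispose of: it is precisely the cited result of Lyubashenko--Manzyuk on the essential image of $z$ in $\Cinf(D,E)$.
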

Next, we develop and interpret this result in terms of $(\infty,2)$-categories. The second result of this paper is the definition of two $(\infty,2)$-categories: the first, $\Ainf Cat_{(\infty,2)}$, has as objects $\Ainf$-categories and $(\infty,1)$-categories of morphisms given by
\medskip
\[
\Ainf Cat_{(\infty,2)}(A,B)=N_{\Ainf}(\mathcal{A}_{\infty}(A,B))
\]

Here $\mathcal{A}_{\infty}(A,B)$ is the $\Ainf$-category of unital $\Ainf$-functors as defined in \cite{Ly}, \cite{LH}, \cite{Sei} and $N_{\Ainf}$ is the $\Ainf$-nerve introduced in \cite{Fao}. The bar construction of $\mathcal{A}_{\infty}(A,B)$ provides an enrichment of the category of $\Ainf$-categories over the the monoidal category of dg-cocategories which is used to define the structure of $(\infty,2)$-category on $\Ainf Cat_{(\infty,2)}$. 

The second $(\infty,2)$-category we describe, $dgCat_{(\infty,2)}$, is the full $(\infty,2)$-subcategory of $\Ainf Cat_{(\infty,2)}$ whose objects are dg-categories. In this case, for dg-categories $C$ and $D$, the $\Ainf$-category $\mathcal{A}_{\infty}(C,D)$ is the dg-category of theorem \ref{th1}. Moreover we have that
\medskip
\[
dgCat_{(\infty,2)}(C,D)=N_{\Ainf}(\mathcal{A}_{\infty}(C,D))=N_{dg}(\mathcal{A}_{\infty}(C,D))
\]
where $N_{dg}$ is the dg-nerve of Lurie \cite{LHA}.

Our third result says that the associated $(\infty,1)$-category to $dgCat_{(\infty,2)}$ is a model for the simplicial localization of $(dgCat,Tab)$. Recall that Dwyer-Kan localization \cite{DK} associates to the model category $(dgCat,Tab)$ a simplicial category whose homotopy category is equivalent to the localization of $dgCat$ at the class of weak-equivalences of the Tabuada model structure. However, this simplicial category is not always the correct construction to consider in the context of $(\infty,1)$-categories. The reason is that the simplicial sets of morphisms in the Dwyer-Kan localization are not, in general, Kan fibrant simplicial sets. Models for the $(\infty,1)$-category associated to a model category exists when the model category is simplicial \cite{LHT}, which is not the case of $(dgCat,Tab)$. In general, a construction of an $(\infty,1)$-category associated to a category with a class of weak-equivalences can be defined \cite{LHA} but it is, in practice, difficult to manage for concrete applications. To an $(\infty,2)$ category $X$, we can associate an $(\infty,1)$-category, $X^{\circ}$ obtained by taking the maximal Kan complex contained in each of the $(\infty,1)$-category of morphisms in $X$. This procedure can be understood as a groupidification of $(\infty,1)$-category of morphisms. We prove that the $(\infty,1)$-category
\medskip
\[
dgCat_{(\infty,1)}= dgCat_{(\infty,2)}^{\circ}
\]
is a model for the $(\infty,1)$-category associated to the model category $(dgCat,Tab)$ in the following sense.
\medskip
\begin{thm}
Given dg-categories $D$ and $E$, there exists natural weak homotopy equivalences of simplicial sets
\medskip
\[
Map_{L_{Tab}(dgCat)}(C,D) \xrightarrow{} Map_{dgCat_{(\infty,1)}}(C,D) 
\]
where $Map_{L_{Tab}(dgCat)}(C,D)$ is the mapping space in the Dwyer-Kan localization of $(dgCat,Tab)$. 
\end{thm}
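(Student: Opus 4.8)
The plan is to exhibit the map in the statement as a weak equivalence by comparing both mapping spaces to a single space built from the internal Hom dg-category, the two comparisons being glued together by Theorem \ref{th1}. First I would construct the comparison map itself. There is a canonical functor from the category $dgCat$, equipped with the quasi-equivalences of the Tabuada structure, to $dgCat_{(\infty,1)}$, sending a dg-category to itself and a dg-functor $C\to D$ to the corresponding vertex of $N_{dg}(\mathcal{A}_{\infty}(C,D))$. Since the dg-nerve carries quasi-equivalences of dg-categories to categorical equivalences of $\infty$-categories, and the core functor (passage to the maximal Kan complex) preserves equivalences, quasi-equivalences are sent to equivalences in $dgCat_{(\infty,1)}$. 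By the universal property of the Dwyer--Kan localization \cite{DK}, this functor factors up to coherent homotopy through $L_{Tab}(dgCat)$ and induces the natural map on mapping spaces appearing in the theorem. It then suffices to prove that this single map is a weak homotopy equivalence, naturality in $C$ and $D$ being inherited from the functoriality of every construction involved.

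To analyze the target, recall that by definition $Map_{dgCat_{(\infty,1)}}(C,D)$ is the maximal Kan complex inside $dgCat_{(\infty,2)}(C,D)=N_{dg}(\mathcal{A}_{\infty}(C,D))$. The standard properties of the dg-nerve \cite{LHA} identify its homotopy category with $H^{0}(\mathcal{A}_{\infty}(C,D))$ and give, based at a functor $F$, isomorphisms $\pi_{n}\cong H^{-n}(\mathcal{A}_{\infty}(C,D)(F,F))$ for $n\geq 1$. Taking the core retains exactly the objects together with the edges that become invertible in $H^{0}$, so $\pi_{0}$ becomes the set of isomorphism classes of $H^{0}(\mathcal{A}_{\infty}(C,D))$ while the higher $\pi_{n}$ at $F$ are unchanged.

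For the source I would first use the general fact, valid in any model category, that $\pi_{0}Map_{L_{Tab}(dgCat)}(C,D)\cong Hom_{Ho(dgCat)}(C,D)$, and then rewrite the right-hand side using T\"oen's closed structure together with Theorem \ref{th1}: writing $\mathbf{1}$ for the monoidal unit, $Hom_{Ho(dgCat)}(C,D)\cong Hom_{Ho(dgCat)}(\mathbf{1},\mathbb{R}Hom(C,D))$ is the set of isomorphism classes of objects of $H^{0}(\mathbb{R}Hom(C,D))\cong H^{0}(\mathcal{A}_{\infty}(C,D))$. This matches $\pi_{0}$ of the target, and one checks the bijection is the one induced by the comparison map. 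For $n\geq 1$, the based loop space of $Map_{L_{Tab}(dgCat)}(C,D)$ at a functor $F$ computes the derived automorphisms of $F$, whose homotopy groups are the cohomology groups $H^{-n}$ of the endomorphism complex of $F$ in $\mathbb{R}Hom(C,D)$; by Theorem \ref{th1} these agree with $H^{-n}(\mathcal{A}_{\infty}(C,D)(F,F))$, compatibly with the comparison map. Hence the map is an isomorphism on all homotopy groups at every basepoint in every component, and therefore a weak homotopy equivalence.

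The technical heart, and the step I expect to be the main obstacle, is the higher-homotopy comparison on the source side. Because $(dgCat,Tab)$ is not a simplicial model category, the homotopy groups of $Map_{L_{Tab}(dgCat)}(C,D)$ are not computed by a naive Hom-complex; they must be extracted either from cosimplicial--simplicial resolutions or, more efficiently, from T\"oen's explicit model of the mapping space as the homotopy-coherent classifying space $|w\,\mathbb{R}Hom(C,D)|$ of the weak equivalences of the internal Hom \cite{To}. The delicate point is not that the relevant groups abstractly coincide but that they are matched by the \emph{single} comparison map built above. I would therefore organize the argument around one clean lemma: for every dg-category $\mathcal{A}$, the core of $N_{dg}(\mathcal{A})$ is naturally weakly equivalent to T\"oen's classifying space $|w\mathcal{A}|$ of weak equivalences, the latter understood in the homotopy-coherent sense so that it records the full $H^{-n}(\mathcal{A}(x,x))$. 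Granting this lemma, both sides of the theorem become naturally weakly equivalent to $|w\,\mathbb{R}Hom(C,D)|\simeq|w\,\mathcal{A}_{\infty}(C,D)|$ — the source by T\"oen's computation, the target by the lemma together with Theorem \ref{th1} — and the comparison map is identified with this chain of natural equivalences, which completes the proof.
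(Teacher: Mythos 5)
Your strategy is genuinely different from the paper's proof, which never computes a homotopy group: the paper simply chains derived adjunctions, using the Quillen adjunction $U(dg[-])\dashv N_{dg}$ of Proposition \ref{pr4} together with the mapping-space form of T\"{o}en's internal Hom adjunction and Theorem \ref{th1}, applied to the monoidal unit $dg[\Delta^0]=U(dg[\Delta^0])$, to produce natural equivalences
\[
Map_{L_{Tab}(dgCat)}(C,D)\simeq Map(dg[\Delta^0]\otimes^{\mathbb{L}}C,D)\simeq Map(U(dg[\Delta^0]),\Ainf(C,D))\simeq Map_{Ho(SSet)_{Joy}}(\Delta^0,N_{dg}(\Ainf(C,D)))\simeq (N_{dg}(\Ainf(C,D)))^{\circ},
\]
the last step being that the derived mapping space out of the point in the Joyal model structure is the maximal Kan complex. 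Your plan instead matches homotopy groups and funnels the coherence problem into one lemma; that reduction is a reasonable idea, and you correctly identify that abstract agreement of homotopy groups is not enough.

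The genuine gap is that your key lemma is false as stated. For a general dg-category $\cA$, the classifying space of the strict category $w\cA$ of morphisms in $Z^0(\cA)$ that become invertible in $H^0(\cA)$ does \emph{not} record the groups $H^{-n}(\cA(x,x))$: take $\cA$ with one object $x$ and $\End(x)=\mathbb{K}\oplus\mathbb{K}\epsilon$, $\deg(\epsilon)=-1$, $\epsilon^2=0$, $d=0$; then $N(w\cA)\simeq B\mathbb{K}^{*}$ is a $K(\mathbb{K}^{*},1)$, while $(N_{dg}(\cA))^{\circ}$ has $\pi_2\cong H^{-1}(\End(x))=\mathbb{K}\neq 0$. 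The equivalence you want is a Dwyer--Kan classification-space statement, and it holds only when $\cA$ is the dg-category of fibrant--cofibrant objects of a $Ch(\mathbb{K})$-enriched model category (it is the abundance of weak equivalences between different resolutions that lets the strict nerve capture the higher cohomology). Your hedge ``understood in the homotopy-coherent sense'' does not repair this: T\"{o}en's computation in \cite{To} identifies $Map_{L_{Tab}(dgCat)}(C,D)$ with the \emph{strict} nerve of weak equivalences of right quasi-representable bimodules, so that strict nerve is the object you must compare with; if you instead redefine $|w\cA|$ homotopy-coherently you then owe comparisons of that gadget both with T\"{o}en's nerve and with the core. Worse, your final gluing applies the lemma to $\Ainf(C,D)$ via the asserted equivalence $|w\,\mathbb{R}Hom(C,D)|\simeq|w\,\Ainf(C,D)|$, but $\Ainf(C,D)$ is not of the form $Int$ of a model category and the strict $|w(-)|$ is not invariant under quasi-equivalence, so this step is unjustified. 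The repair is to prove the lemma only for $Int$ of a dg-model category, apply it to $\mathbb{R}Hom(C,D)\simeq Int((Mod^{\bullet}(Q(C),D))^{rqr})$, and then transfer to $\Ainf(C,D)$ through Theorem \ref{th1}, using that $N_{dg}$ followed by $(-)^{\circ}$ (not $|w(-)|$) carries quasi-equivalences to homotopy equivalences of Kan complexes. With that restructuring your argument can be made to work, but the corrected lemma---which is the entire technical content of your approach---still requires a proof, essentially the comparison of both sides with $\coprod B\,\mathrm{haut}$ via \cite{DK}.
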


In the last section, as an application, we show how the Hochschild cohomology arises naturally from the $(\infty,2)$-categories $dgCat_{(\infty,2)}$ and $\Ainf Cat_{(\infty,2)}$. For a dg-category $D$, the Hochschild complex is defined as 
\medskip
\[
\mathbb{HH}(D,D)=\mathbb{R}Hom^{\bullet}_{Mod(D\otimes D^{op})}(D,D)
\] 
and its cohomology
\medskip
\[
HH^{i}(D,D)=H^{i}(\mathbb{HH}(D,D))
\] 
is the Hochschild cohomology. The approach of T\"{o}en to the computation of the derived enrichment of $(dgCat,Tab)$ via the dg-category of right quasi-representable dg-bimodules, provides an identification of the Hochschild complex of $D$ with the complex of endomorphism of $D$, seen as a dg-bimodule in the obvious way, in the dg-category $\mathbb{R}Hom(D,D)$. However, the Hochschild complex can be explicitly computed choosing a particular resolution \cite{FMT}, \cite{GeJo}, \cite{Ke2}, obtaining
\medskip
\[
\mathbb{HH}(D,D)\simeq Hom^{\bullet}_{\Ainf (D,D)}(Id_D,Id_D)
\]
This approach extends to $\Ainf$-categories, for which the Hochschild complex is given by
\medskip
\[
\mathbb{HH}(A,A)\simeq Hom^{\bullet}_{\Ainf (A,A)}(Id_A,Id_A)
\]
We remark that the approach via derived functor fails in the $\Ainf$-setting because of the lack of a model structure on the category of $\Ainf$-bimodules. We prove the following theorem, which generalise the computation of the Hochschild cohomology for dg-categories of \cite{To} to $\Ainf$-categories.
\medskip
\begin{thm}
For any $\Ainf$-category $A$, $i\ge 0$, we have
\medskip
\[
\pi_i(Map_{N_{\Ainf}(\Ainf (A,A))}(Id_A,Id_A))=HH^{-i}(A,A)
\]
\end{thm}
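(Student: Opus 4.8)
The plan is to reduce the statement to a single homotopical computation of mapping spaces in the $\Ainf$-nerve and then to the cohomology of a Hom-complex. First I would invoke the identification recalled in the introduction, the quasi-isomorphism $\mathbb{HH}(A,A)\simeq Hom^{\bullet}_{\Ainf(A,A)}(Id_A,Id_A)$ coming from the explicit Hochschild resolution of \cite{FMT}, \cite{GeJo}, \cite{Ke2}. This reduces the theorem to the assertion that for the $\Ainf$-category $\cC=\Ainf(A,A)$, with basepoint the identity morphism $Id_A\in Hom^{0}_{\cC}(Id_A,Id_A)$, one has $\pi_i(Map_{N_{\Ainf}(\cC)}(Id_A,Id_A))\cong H^{-i}(Hom^{\bullet}_{\cC}(Id_A,Id_A))$.

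Second, I would isolate and prove the following general lemma, of which the theorem is the special case $\cC=\Ainf(A,A)$: for any $\Ainf$-category $\cC$, any two objects $x,y$, and any closed degree-zero morphism $f\in Hom^{0}_{\cC}(x,y)$ serving as basepoint, there is a natural isomorphism $\pi_i(Map_{N_{\Ainf}(\cC)}(x,y),f)\cong H^{-i}(Hom^{\bullet}_{\cC}(x,y))$ for all $i\ge 0$. This is the $\Ainf$-analogue of Lurie's computation of mapping spaces in the dg-nerve \cite{LHA}, and it is where the real work lies. Since $N_{\Ainf}(\cC)$ is a quasi-category by \cite{Fao}, the relevant mapping spaces are Kan complexes and their homotopy groups are well defined.

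To prove the lemma I would proceed as follows. Using the explicit description of the simplices of $N_{\Ainf}$ from \cite{Fao}, I would adopt a pinched model for $Map_{N_{\Ainf}(\cC)}(x,y)$ in which an $n$-simplex is an $(n+1)$-simplex of $N_{\Ainf}(\cC)$ whose restriction to the initial $n$-face is the constant simplex at $x$ and whose final vertex is $y$; unwinding the definition of the $\Ainf$-nerve, such a simplex is a family of elements of $Hom^{\bullet}_{\cC}(x,y)$ of the appropriate degrees, subject to the $\Ainf$-Maurer--Cartan relations relative to $f$. I would then show that this simplicial set is weakly equivalent to the Dold--Kan image of the suitably truncated Hom-complex $(Hom^{\bullet}_{\cC}(x,y),m_1)$ based at $f$. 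Since the Dold--Kan correspondence identifies the homotopy groups of the simplicial abelian group associated to a cochain complex $V^{\bullet}$ with $\pi_i\cong H^{-i}(V^{\bullet})$, the formula follows.

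The main obstacle is controlling the contributions of the higher $\Ainf$-operations $m_k$, $k\ge 2$, to the simplicial structure of the mapping space: these are absent in the dg-nerve, where Lurie's computation applies verbatim, but in $N_{\Ainf}(\cC)$ they enter the face and degeneracy maps and the Maurer--Cartan equations. The key point is that, upon linearizing the $\Ainf$-Maurer--Cartan condition at the closed basepoint $f$, only the differential $m_1$ contributes to first order, while the operations $m_k$ for $k\ge 2$ produce higher corrections that shift the relevant filtration without altering the cohomology. An alternative, perhaps cleaner, route to the same lemma is to use the invariance of $N_{\Ainf}$ under $\Ainf$-quasi-equivalences together with the fact that every $\Ainf$-category is $\Ainf$-equivalent to a dg-category via the Yoneda embedding into modules; this reduces the computation to the dg-nerve, where the result is \cite{LHA}, at the cost of checking that the chosen equivalence matches the Hom-complexes up to quasi-isomorphism.
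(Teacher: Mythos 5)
Your proposal is correct and is essentially the paper's own argument: the paper likewise passes to the (pinched) left mapping space of \cite{LHT} to identify $\pi_i(Map_{N_{\Ainf}(\Ainf(A,A))}(Id_A,Id_A))$ with $H^{-i}(Hom^{\bullet}_{\Ainf(A,A)}(Id_A,Id_A))$, and then concludes by the paper's definition of the Hochschild complex of an $\Ainf$-category (so your step 1 is a definition in this setting, not an identification that needs to be invoked from the literature). The one technical obstacle you flag is in fact vacuous: in the pinched model every term of the $\Ainf$-functor equations involving $m_k$ with $k\ge 3$ has an identity (or a zero) among its inputs and hence vanishes by strict unitality, while $m_2$ enters only through unit actions, so the defining equations are linear in the simplex data and Lurie's dg-nerve computation via Dold--Kan applies verbatim, with no filtration or linearization argument needed.
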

\medskip
\begin{com*}
It is well known \cite{GeJo} that the bar construction of the Hochschild complex of an $\Ainf$-category (or dg) carries a structure of a dg-bialgebra, whose operations induce cup product and Gerstenhaber bracket on the Hochschild cohomology. The bialgbera structure is given \cite{Ke2} by considering the complex
\medskip
\[
B(Hom^{\bullet}_{\Ainf (A,A)}(Id_A,Id_A))
\]
as the endomorphims coalgebra of $Id_A$ in the enriched category of $\Ainf$-categories. Those ideas relate to the question of what dg-categories and $\Ainf$-categories form \cite{Tam}. Any possible answer should recover, in some form, the bialgebra structure of the Hoschschild complex. In this sense, the $(\infty,2)$-categories $dgCat_{(\infty,2)}$ and $\Ainf Cat_{(\infty,2)}$ seem to be candidates for this purpose. For a dg (or $\Ainf$)-category $D$ the mapping space of endomorphims at $Id_D$
\medskip
\begin{equation}\label{eq27}
End_{dgCat_{(\infty,2)}}(Id_D)=Map_{N_{dg}(\Ainf (D,D))}(Id_D,Id_D)
\end{equation}
comes equipped with two maps, one given by the $(\infty,2)$-category structure of $dgCat_{(\infty,2)}$ and the other from being the simplcial set of endomorphims in the $(\infty,1)$-category $N_{dg}(\Ainf (D,D)$ which, by construction, are related to the endomorphism coalgebra (\ref{eq27}), and its homotopy groups compute the Hochschild cohomology. 
\end{com*}
\medskip
\begin{ack*}
The author would like to thank his doctoral advisor Mikhail Kapranov and Hiro-Lee Tanaka for useful discussions through the realization of this paper. This work was partially supported by World Premier International Research Initiative (WPI), MEXT, Japan.
\end{ack*}

\newpage

\section{Homotopy theory of dg-categories and derived enrichment.}

In this section we recall preliminary results about the homotopy theory of dg-categories and the construction of the $\Ainf$-category of $\Ainf$-functors $\Ainf(A,B)$ between two given $\Ainf$-categories. We then prove theorem \ref{th1}. 

	\subsection{The Tabuada model structure on dg-categories and T\"{o}en's derived enrichment.}
From now on we fix a field $\mathbb{K}$ of characteristic $0$. Tabuada in \cite{Tab} defines a model structures on the category $dgCat$ (see appendix \ref{APPA}) that we recall.
\medskip
\begin{proposition}
There exists a cofibrantly generated model category structure on $dgCat$, denoted by $(dgCat,Tab)$, for which weak-equivalences  are dg-functors $f\in Hom_{dgCat}(D,E)$ such that for every pair of objects $x,y$ in $D$, the induced map of cochain complexes
\medskip
\[
f_{x,y}:Hom^{\bullet}_{D}(x,y)\to Hom^{\bullet}_{E}(f(x),f(y))
\]
is a quasi-isomorphism, and the induced functor 
\medskip
\[
H^0(f): H^0(D)\to H^0(E)
\]
is essentially surjective. Fibrations are dg-functors such that for every pair of objects $x,y$ in $D$, the induced map of cochain complexes
\medskip
\[
f_{x,y}:Hom^{\bullet}_{D}(x,y)\to Hom^{\bullet}_{E}(f(x),f(y))
\]
is an epimorphism and, for every object $x$ in $D$ and isomorphism $[v]\in Hom_{H^0(E)}(f(x),z)$, there exists an isomorphism $[u]\in Hom^{\bullet}_{H^0(D)}(x,y)$ such that $f(u)=v$. Cofibrations are dg-functors satisfying left lifting property with respect to trivial fibrations. In this model category every dg-category is fibrant. 
\end{proposition}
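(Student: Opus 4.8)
The plan is to establish the model structure via the recognition theorem for cofibrantly generated model categories (Hovey \cite{Hov}, Thm.~2.1.19). Thus I would first exhibit explicit sets $I$ of generating cofibrations and $J$ of generating trivial cofibrations, and take $W$ to be the class of \emph{quasi-equivalences} described in the statement. To set up the generators, for a cochain complex $V$ let $\mathcal{C}(V)$ denote the dg-category with two objects $0,1$, with $Hom(0,0)=Hom(1,1)=\mathbb{K}$, $Hom(1,0)=0$ and $Hom(0,1)=V$; a chain map $V\to W$ induces an identity-on-objects dg-functor $\mathcal{C}(V)\to\mathcal{C}(W)$. Writing $S^{n}$ for $\mathbb{K}$ placed in degree $n$ and $D^{n}$ for the acyclic complex $\mathbb{K}\xrightarrow{\id}\mathbb{K}$ in degrees $n-1,n$, I take $I=\{\,\emptyset\to\underline{\mathbb{K}}\,\}\cup\{\,\mathcal{C}(S^{n-1})\to\mathcal{C}(D^{n})\,\}_{n\in\ZZ}$, where $\underline{\mathbb{K}}$ is the one-object dg-category with endomorphisms $\mathbb{K}$. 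For $J$ I take the identity-on-objects functors $\mathcal{C}(0)\to\mathcal{C}(D^{n})$ induced by $0\to D^{n}$, together with the single functor $\underline{\mathbb{K}}\to\mathcal{K}$ selecting one object of the dg-category $\mathcal{K}$ that freely represents a coherent homotopy equivalence between two objects (so that a dg-functor $\mathcal{K}\to D$ is the datum of two objects of $D$ together with an $H^{0}$-isomorphism between them).

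Next I would dispatch the formal hypotheses. The class $W$ satisfies two-out-of-three and is closed under retracts, since this holds for quasi-isomorphisms of complexes and for essentially surjective functors on the $H^{0}$-level. Because $dgCat$ is locally presentable (objects are small diagrams of complexes over a field), the domains of $I$ and $J$ are small relative to the respective cellular maps, so the small-object argument applies. A direct inspection identifies the maps with the right lifting property: $J$-inj is exactly the class of fibrations described in the statement, since lifting against $\mathcal{C}(0)\to\mathcal{C}(D^{n})$ forces degreewise surjectivity of $f_{x,y}$ while lifting against $\underline{\mathbb{K}}\to\mathcal{K}$ forces the isomorphism-lifting condition, and $I$-inj is the class of dg-functors that are surjective on objects with each $f_{x,y}$ a surjective quasi-isomorphism. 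From these descriptions the equality $I\text{-inj}=W\cap J\text{-inj}$ follows: a quasi-equivalence that is a fibration is automatically surjective on objects, because essential surjectivity provides an $H^{0}$-isomorphism onto each object of the target which the isomorphism-lifting property then lifts to $D$, and the converse inclusion is immediate from the comparison of surjectivity and acyclicity of the $Hom$-complexes.

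The crux of the proof is the verification that $J\text{-cell}\subseteq W\cap I\text{-cof}$, i.e.\ that every transfinite composite of pushouts of maps in $J$ is a quasi-equivalence. The contribution of the $\mathcal{C}(0)\to\mathcal{C}(D^{n})$ is harmless: pushing them out only adds acyclic direct summands to the relevant $Hom$-complexes, preserving quasi-isomorphism type. The genuine difficulty is the behaviour of pushouts along $\underline{\mathbb{K}}\to\mathcal{K}$: attaching $\mathcal{K}$ formally adjoins a morphism together with all the higher homotopies witnessing its invertibility, and one must show that this operation neither changes the $Hom$-complexes up to quasi-isomorphism nor alters $H^{0}$ beyond adjoining isomorphisms. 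I expect this to be the main obstacle, and I would handle it by equipping the pushout with the increasing filtration by word-length in the freely adjoined generators of $\mathcal{K}$ and analysing the associated graded: the acyclicity built into $\mathcal{K}$ makes the successive quotients contractible, so a spectral-sequence (or direct telescoping) argument shows the inclusion of $Hom$-complexes is a quasi-isomorphism, while on $H^{0}$ the pushout exactly inverts the selected morphism. Stability of quasi-equivalences under transfinite composition then gives $J\text{-cell}\subseteq W$, and $J\text{-cell}\subseteq I\text{-cof}$ is automatic.

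With all hypotheses in place, the recognition theorem yields the cofibrantly generated model structure with weak equivalences $W$, generating cofibrations $I$ and generating trivial cofibrations $J$; the fibrations and trivial fibrations are then $J$-inj and $I$-inj, which are precisely the classes described, and cofibrations are defined by the left lifting property against trivial fibrations. Finally, every object is fibrant: the unique functor to the terminal dg-category, which has one object with zero endomorphism complex, is degreewise surjective on $Hom$-complexes and satisfies the isomorphism-lifting condition vacuously, hence is a fibration.
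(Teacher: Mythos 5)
You should first be aware that the paper contains no proof of this proposition at all: it is recalled verbatim from Tabuada's paper \cite{Tab}, and the citation stands in for the argument. What you have written is, in outline, a faithful reconstruction of Tabuada's original proof: the same generating sets (the point dg-category $\underline{\mathbb{K}}$ and the maps $\mathcal{C}(S^{n-1})\to\mathcal{C}(D^{n})$ for cofibrations; the maps $\mathcal{C}(0)\to\mathcal{C}(D^{n})$ together with the inclusion of one object into a homotopy-equivalence dg-category $\mathcal{K}$ for trivial cofibrations), the same appeal to the recognition theorem, and the same identification of the crux, namely that pushouts along $\underline{\mathbb{K}}\to\mathcal{K}$ remain quasi-equivalences. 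So the route is the correct one, and the formal steps (two-out-of-three, retracts, smallness, the characterization of $I$-inj, and the equality $I\text{-inj}=W\cap J\text{-inj}$) are all handled as in \cite{Tab}.

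Two places in your sketch conceal the real mathematical content, and you should flag them explicitly. First, everything hinges on taking the \emph{correct} $\mathcal{K}$: the dg-category freely generated by closed $f\in Hom^{0}(1,2)$, $g\in Hom^{0}(2,1)$, homotopies $r_{1},r_{2}$ of degree $-1$ with $dr_{1}=gf-1$, $dr_{2}=fg-1$, \emph{and} a degree $-2$ generator enforcing the coherence between them. Without that last generator the hom-complexes of $\mathcal{K}$ are not quasi-isomorphic to $\mathbb{K}$, the inclusion $\underline{\mathbb{K}}\to\mathcal{K}$ is not a quasi-equivalence, and your word-length filtration argument for $J\text{-cell}\subseteq W$ collapses; this is precisely the caveat emphasized by Drinfeld \cite{Dr}. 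Your word ``coherent'' suggests you intend this, but your parenthetical gloss --- that a dg-functor $\mathcal{K}\to D$ ``is'' the datum of two objects and an $H^{0}$-isomorphism --- is not literally true: it is the datum of a full coherent equivalence, and what is actually needed is the lemma that any $H^{0}$-isomorphism in a dg-category extends to such a datum. Second, and relatedly, the asserted equality of $J$-inj with the class of fibrations in the statement is itself a nontrivial lemma: one must show that for a dg-functor which is surjective on hom-complexes, lifting an isomorphism in $H^{0}$ on the nose ($f(u)=v$) is equivalent to lifting an entire coherent equivalence datum, which requires an inductive obstruction-correction argument using surjectivity at each stage. Both points are supplied in \cite{Tab} (see also \cite{Dr}); with them made explicit, your outline is the standard and correct proof.
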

\medskip
\begin{remark}
The category $dgCat$ carries a closed symmetric monoidal structure
\medskip
\begin{equation}
\begin{gathered}
-\otimes-:dgCat \times dgCat \to dgCat \\
dgFun^{\bullet}(-,-): dgCat^{op}\times dgCat \to dgCat
 \end{gathered}
 \end{equation} \label{eq2}
where $-\otimes-$ is the tensor product of dg-categories and $dgFun^{\bullet}(D,E)$ is the dg-category of dg-functors (see appendix \ref{APPA}). The adjunction (\ref{eq2}) do not define a Quillen adjunction of two variables \cite{Hov} but the tensor product can be left derived \cite{To} 
\medskip
\[
-\otimes^\mathbb{L}-: Ho(dgCat) \times Ho(dgCat) \to Ho(dgCat)
\]
by the formula
\medskip
\[
D \otimes^\mathbb{L}E=Q(D) \otimes Q(E)
\]
defining a monoidal structure on $Ho(dgCat)$ which is closed by the result of T\"{o}en.
\end{remark}
\medskip
\begin{thm}\cite{To}
The monoidal category $(Ho(dgCat),\otimes^\mathbb{L})$ is closed. Namely, given dg-categories $D,E$, there exists an object of 
\medskip
\[
\mathbb{R}Hom(D,E)\in Ho(dgCat)
\] 
and natural isomorphisms 
\medskip
\[
Hom_{Ho(dgCat)}(C\otimes^\mathbb{L} D,E)\cong Hom_{Ho(dgCat)}(C,\mathbb{R}Hom(D,E))
\] 
The dg-category $\mathbb{R}Hom(D,E)$ is equivalent to the full sub dg-category of right quasi-representable $Q(D)$-$E$ dg-bimodules 
\medskip
\[
Int((Mod^{\bullet}(Q(D),E)^{rqr})
\]
whose objects are fibrant and cofibrant dg-modules (see appendix \ref{APPB}).
\end{thm}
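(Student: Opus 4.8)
The plan is to realise $\mathbb{R}Hom(D,E)$ as the dg-category that classifies right quasi-representable bimodules, and then to obtain the closedness adjunction by ``currying'' a bimodule over $C\otimes^{\mathbb{L}}D$ into a $C$-indexed family of bimodules over $D$. The formal skeleton is a tensor--hom manipulation on bimodule categories; the work is entirely in controlling the homotopical conditions.

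First I would set up the two homotopical inputs. On $A$-$B$ dg-bimodules, that is dg-functors $A\otimes B^{op}\to\chain(\mathbb{K})$, there is a cofibrantly generated (projective) model structure whose weak equivalences and fibrations are detected objectwise (see appendix \ref{APPB}); write $Int(\mathcal{M})$ for the dg-category of fibrant--cofibrant objects of such a $\mathbb{K}$-linear model category $\mathcal{M}$. Let $Mod^{\bullet}(A,B)^{rqr}$ denote the full subcategory of right quasi-representable bimodules, those $F$ for which each $F(a,-)$ is isomorphic in the derived category of right $B$-modules to a representable one $\Hom_{B}(-,b)$. The first input is the invariance of the derived module category under weak equivalences of dg-categories, which lets us freely replace a dg-category by a weakly equivalent one (in particular by its cofibrant resolution) without changing the homotopy type of its bimodule categories. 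The second input is Toën's derived Morita classification \cite{To}: for dg-categories $A,B$ there is a natural bijection
\[
Hom_{Ho(dgCat)}(A,B)\cong\pi_0\!\left(Int(Mod^{\bullet}(Q(A),B)^{rqr})\right),
\]
sending a morphism, represented by a span $A\xleftarrow{\sim}Q(A)\xrightarrow{\phi}B$, to the $Q(A)$-$B$ bimodule obtained by restricting the diagonal bimodule of $B$ along $\phi$. In particular, with $A=D$ and $B=E$, the right-hand side is by definition $\pi_0$ of $\mathbb{R}Hom(D,E)=Int(Mod^{\bullet}(Q(D),E)^{rqr})$.

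The heart of the argument is then a currying identification. Applying the classification with source $C\otimes^{\mathbb{L}}D=Q(C)\otimes Q(D)$, and using invariance of module categories to absorb an outer cofibrant replacement, the set $Hom_{Ho(dgCat)}(C\otimes^{\mathbb{L}}D,E)$ is identified with isomorphism classes of right quasi-representable $(Q(C)\otimes Q(D))$-$E$ bimodules. Such a bimodule is a dg-functor $F\colon (Q(C)\otimes Q(D))\otimes E^{op}\to\chain(\mathbb{K})$; using the associativity isomorphism $(Q(C)\otimes Q(D))\otimes E^{op}\cong Q(C)\otimes(Q(D)\otimes E^{op})$ together with the closed monoidal (tensor--hom) adjunction (\ref{eq1}) applied with target $\chain(\mathbb{K})$, the datum of $F$ is equivalent to that of a dg-functor
\[
\widetilde{F}\colon Q(C)\longrightarrow Mod^{\bullet}(Q(D),E).
\]
The key point to check is that right quasi-representability of $F$ -- a condition imposed at every object $(c,d)$ of $Q(C)\otimes Q(D)$ -- is equivalent to the assertion that for each object $c$ the $Q(D)$-$E$ bimodule $\widetilde{F}(c)$ is itself right quasi-representable; equivalently, that $\widetilde{F}$ factors through $Mod^{\bullet}(Q(D),E)^{rqr}$. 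Matching the fibrant--cofibrant condition on $F$ with the corresponding cofibrancy of $\widetilde{F}$, and invoking the Morita classification a second time for the source $C$, identifies this set with $Hom_{Ho(dgCat)}(C,\mathbb{R}Hom(D,E))$. Naturality in $C$ (and in $D,E$) is then formal, since the whole correspondence is induced by the adjunction isomorphism already present at the level of strict bimodule categories.

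The main obstacle, as I see it, is homotopical bookkeeping rather than the formal currying, and it concentrates in two places. The first is the derived Morita classification itself: one must control cofibrant resolutions of bimodules and prove that fibrant--cofibrant right quasi-representable bimodules genuinely compute morphisms in $Ho(dgCat)$, i.e. that the passage from spans to bimodules is a bijection. The second, and subtler, is that the tensor product of dg-categories is \emph{not} a Quillen bifunctor and does not preserve cofibrant objects, so the resolutions $Q(C),Q(D)$ cannot be omitted; one must verify that the quasi-representability and (co)fibrancy conditions are preserved -- not merely up to homotopy -- under the currying isomorphism, using the invariance of module categories under weak equivalence to absorb the discrepancy between $Q(C)\otimes Q(D)$ and its cofibrant replacement.
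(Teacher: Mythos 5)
First, a point of order: the paper does not prove this theorem at all --- it is quoted from T\"{o}en \cite{To} (note the citation attached to the statement) and used as a black box; the paper's own contribution, Theorem \ref{th1}, takes it as input. So your proposal can only be compared with T\"{o}en's original argument, whose broad architecture it does reproduce: the derived Morita classification of $Hom_{Ho(dgCat)}(A,B)$ by isomorphism classes of right quasi-representable bimodules, followed by a currying of bimodules over a derived tensor product. Your intermediate reductions are sound: absorbing the outer cofibrant replacement of $Q(C)\otimes Q(D)$ by invariance of derived module categories, the strict tensor--hom identification of $(Q(C)\otimes Q(D))$-$E$-bimodules $F$ with dg-functors $\widetilde{F}\colon Q(C)\to Mod^{\bullet}(Q(D),E)$, and the observation that right quasi-representability of $F$ is equivalent to right quasi-representability of each value $\widetilde{F}(c)$.

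The genuine gap is the last step, which you compress into ``invoking the Morita classification a second time'' and declare formal; it does not type-check. The Morita classification applied to the pair $(C,\mathbb{R}Hom(D,E))$ identifies $Hom_{Ho(dgCat)}(C,\mathbb{R}Hom(D,E))$ with isomorphism classes of right quasi-representable $Q(C)$-$\mathbb{R}Hom(D,E)$-\emph{bimodules}, whereas the currying produces an object of a different kind: a strict dg-functor $\widetilde{F}\colon Q(C)\to Mod^{\bullet}(Q(D),E)$ whose values are rqr but need not lie in the full subcategory $Int(Mod^{\bullet}(Q(D),E)^{rqr})=\mathbb{R}Hom(D,E)$ (your phrase ``matching the fibrant--cofibrant condition on $F$ with the corresponding cofibrancy of $\widetilde{F}$'' has no precise content, since evaluation at $c$ is restriction along a dg-functor, hence right rather than left Quillen). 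To close the loop one must compare, in both directions and compatibly with the equivalence relations on each side, (a) dg-functors $Q(C)\to Mod^{\bullet}(Q(D),E)$ with rqr values, taken up to levelwise quasi-isomorphism, and (b) rqr $Q(C)$-$\mathbb{R}Hom(D,E)$-bimodules up to isomorphism in the derived category: one direction is a graph/Yoneda construction $(c,H)\mapsto Hom^{\bullet}_{Mod^{\bullet}(Q(D),E)}(H,\widetilde{F}(c))$, the other requires a rectification (Kan extension) argument, and one must verify that these induce mutually inverse bijections on equivalence classes while preserving quasi-representability. This comparison --- in T\"{o}en's paper, the identification of $\mathbb{R}Hom(C,Int(M\text{-}Mod))$ with $Int((C\otimes^{\mathbb{L}}M)\text{-}Mod)$ and its rqr refinement --- is precisely the technical core of his proof of closedness. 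As written, your proposal does not prove the theorem; it reduces it to its hardest sub-statement and labels that sub-statement formal.
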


\subsection{Derived enrichment via the dg-category of $\Ainf$-functors.}
$\Ainf$-categories and $\Ainf$-functors have been introduced by Fukaya \cite{Fuk} as a generalization of the notion of $\Ainf$-algebra due to Stasheff \cite{Sta}. They are an not strictly associative version of dg-categories. For a precise definition we refer to appendix \ref{APPA}. There is a construction, due originally to Kontsevich and Fukaya, of the $\Ainf$-category of $\Ainf$-functors between to two given $\Ainf$-categories. We briefly recall this construction and state the theorem that the derived enrichment of dg-categories is computed by the (in this case) dg-category of $\Ainf$-functors. We refer to \cite{LH}, \cite{Ly}, \cite{Sei} for a more detailed exposition. 
\medskip
\begin{proposition}\label{p4}
Given unital $\Ainf$-categories $A$ and $B$, there exists an $\Ainf$-category $\Ainf(A,B)$ whose objects are unital $\Ainf$-functors from $A$ to $B$ and graded complex of morphisms between two given $\Ainf$-functors $f$ and $g$ given, in degree $d$, by a sequence of graded morphisms
\medskip
\[
r^d_n: Hom^{\bullet}_{A}(x_{n-1}, x_n)\otimes \dots \otimes Hom^{\bullet}_{A}(x_0, x_1)\to Hom^{\bullet}_{B}(f(x_0),g(x_n))
\]
of degree $d-n$, for $n\ge 0$. 
\end{proposition}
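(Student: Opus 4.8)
The plan is to construct $\Ainf(A,B)$ as the $\Ainf$-category whose morphism complexes are given by the natural "pre-natural transformations" and whose $\Ainf$-operations are defined by summing over all ways of inserting the bar-differential of $A$ and the structure maps of $B$ into the sequences $r^d_n$. Concretely, I would first fix the graded vector space of morphisms: for $\Ainf$-functors $f,g\colon A\to B$, declare a degree-$d$ morphism to be a collection $r = (r^d_n)_{n\ge 0}$ as in the statement, each $r^d_n$ homogeneous of degree $d-n$. The degree shift $d-n$ is forced by the Koszul sign conventions once one passes to the bar construction, so I would set up the bar coalgebra $BA = \bigoplus_n (A[1])^{\otimes n}$ of $A$ and identify such a collection with a coderivation-type map $BA \to BB$ covering the pair $(f,g)$; this makes the grading and signs bookkeeping automatic rather than ad hoc. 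The key point to verify at this stage is that this identification is a bijection of graded vector spaces, which is essentially the universal property of the cofree coalgebra $BB$.

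Next I would define the $\Ainf$-structure maps $\mathfrak{m}_k$ on $\Ainf(A,B)$. The first differential $\mathfrak{m}_1(r)$ should be the graded commutator of $r$ with the codifferentials $b_A$ on $BA$ and $b_B$ on $BB$, i.e. $\mathfrak{m}_1(r) = b_B\circ r \pm r\circ b_A$, read off component-wise into the maps $r^d_n$. The higher compositions $\mathfrak{m}_k(r_k,\dots,r_1)$ are defined by the usual formula that inserts the $k$ inputs together with copies of $b_B$ into a single outer structure map of $B$, summing over all orderings compatible with the tree combinatorics. I would then check that these operations are well defined (land in the correct morphism complex with the correct degree) and, using the identification with coderivations, that $\mathfrak{m}_1^2 = 0$ and more generally the $\Ainf$-relations $\sum \pm \mathfrak{m}(\dots \mathfrak{m} \dots) = 0$ hold. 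The cleanest route is to deduce all relations at once from associativity of composition of the associated coalgebra maps, so that the $\Ainf$-axioms for $\Ainf(A,B)$ become a formal consequence of the $\Ainf$-axioms for $A$ and $B$.

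For the objects, I would verify that a degree-$0$ element $f$ with $\mathfrak{m}_1 f$ corresponding to the defining $\Ainf$-functor equations is exactly the data of a (unital) $\Ainf$-functor $A\to B$: the condition $\mathfrak{m}_1 f = 0$ translates into the statement that the coalgebra map $BA\to BB$ commutes with the codifferentials, which is the definition of an $\Ainf$-functor. Unitality is imposed by restricting to those $f$ whose linear term $f_1$ sends the cohomological unit to the unit and whose higher terms vanish appropriately on units; I would need to check this unitality condition is preserved by the identity morphism and stable under the operations, so that the unital functors genuinely form a full $\Ainf$-subcategory. The main obstacle I anticipate is purely the sign and grading bookkeeping: getting the Koszul signs in $\mathfrak{m}_1$ and in the higher $\mathfrak{m}_k$ consistent so that the $\Ainf$-relations close up exactly, rather than up to a sign discrepancy. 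I would manage this by never working with the $r^d_n$ directly in the verification, but instead phrasing everything in terms of coderivations and (co)composition on the bar side, where the signs are governed by a single consistent convention, and only at the very end unwinding back to the component maps $r^d_n$ to match the form stated in the proposition. Since this is a construction already carried out in \cite{LH}, \cite{Ly}, \cite{Sei}, I would cite those for the verification of the full $\Ainf$-axioms and present here only the data and the shape of the operations.
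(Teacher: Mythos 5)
Your proposal is correct and follows essentially the same route as the paper: the paper offers no proof of this proposition either, recalling it as the Kontsevich--Fukaya construction, writing out the component formulas for $m_1$ and $m_2$ in the subsequent remark, and deferring signs and the verification of the $\Ainf$-relations to \cite{LH}, \cite{Ly}, \cite{Sei} --- exactly the references you invoke, and whose bar-construction/coderivation formalism is the packaging you adopt. One slip to correct if you do write out the higher operations: $\mathfrak{m}_k(r_k,\dots,r_1)$ interleaves the $k$ inputs with components of the $\Ainf$-functors $f_0,\dots,f_k$ inside a single structure map $m_s$ of $B$, as in the paper's formula $m_2(r^{d_1},r^{d_2})_n=\sum \pm\, m_s(f_{\underline{i}}\otimes r^{d_1}_s\otimes g_{\underline{j}}\otimes r^{d_2}_u\otimes h_{\underline{l}})$, not with ``copies of $b_B$''.
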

\medskip
\begin{remark}
One can show that if $A$ and $B$ are dg-categories, then $\Ainf(A,B)$ is itself a dg-category. The $m_1$ term of $\Ainf(A,B)$ is given by the graded map
\medskip
\[
m_1: Hom^{\bullet}_{\Ainf(A,B)}(f,g)\to Hom^{\bullet}_{\Ainf(A,B)}(f,g) 
\]
which takes an element $r^d=\lbrace r^d_n \rbrace$ of degree $d$ to the element $m_1(r^d)$ of degree $d+1$ whose $n$-th component is given by the expression
\medskip
\[
m_1(r^d)_n=\sum (-1)^{\ast} m_{r+t+1}(f_{\underline{i}}\otimes r_s \otimes g_{\underline{j}}) - (-1)^d \sum_{u+t+s=n} (-1)^{\ast} r^d_{u+t+1}(Id^{\otimes u}\otimes m_s \otimes Id^{\otimes t})
\]
where $s\ge 0$, $q\ge 0$, $p\ge 0$, $p+q+s=n$, $\underline{i}=(i_1, \cdots, i_r)$, $i_1+\cdots +i_r=p$, $\underline{j}=(j_1, \cdots, j_t)$, $j_1+\cdots +j_t=q$. Morphisms in the category $Z^0(\Ainf(A,B))$ are then identified with natural $\Ainf$-transformations of $\Ainf$-functors \cite{Sei}. The $m_2$ term
\medskip
\[
m_2: Hom^{\bullet}_{\Ainf(A,B)}(f,g)\otimes Hom^{\bullet}_{\Ainf(A,B)}(g,h)\to Hom^{\bullet}_{\Ainf(A,B)}(g,h) 
\]
takes elements $r^{d_1}=\lbrace r^{d_1}_n \rbrace$ and $r^{d_2}=\lbrace r^{d_2}_n \rbrace$, to the element $m_2(r^{d_1},r^{d_2})$ whose $n$-th components are given by the expression
\medskip
\[
m_2(r^{d_1},r^{d_2})_n=\sum (-1)^{\ast} m_s(f_{\underline{i}}\otimes r^{d_1}_s \otimes g_{\underline{j}} \otimes r^{d_2}_u \otimes h_{\underline{l}})
\]
where $\underline{i}=(i_1, \cdots, i_r)$, $i_1+\cdots +i_r=p$, $\underline{j}=(j_1, \cdots, j_t)$, $j_1+\cdots +j_t=q$, $\underline{l}=(l_1, \cdots, l_v)$, $l_1+\cdots +l_v=z$, $p,q,z\ge 0$, $p+s+q+u+z=n$. Here we are not specifying the signs $(-1)^{\ast}$ and we refer to \cite{Sei} for the sign convention adopted.
\end{remark}
\medskip
We restate our first result.
\medskip
\begin{thm}\label{th2}
Given dg-categories $D,E$ there exists natural isomorphisms in $Ho(dgCat)$ 
\[
\mathbb{R}Hom(D,E)\xrightarrow{\simeq}  \mathcal{A}_{\infty}(D,E)
\]
\end{thm}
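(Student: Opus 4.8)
The plan is to compare both sides with the dg-category of right quasi-representable bimodules appearing in T\"{o}en's theorem. Since that theorem identifies $\mathbb{R}Hom(D,E)$ with $Int(Mod^{\bullet}(Q(D),E)^{rqr})$, it suffices to produce a quasi-equivalence between $\Ainf(D,E)$ and the dg-category of fibrant-cofibrant right quasi-representable $Q(D)$-$E$ bimodules. First I would observe that $\Ainf(-,E)$ sends quasi-equivalences to quasi-equivalences, so the cofibrant replacement map $Q(D)\to D$ induces a quasi-equivalence $\Ainf(D,E)\xrightarrow{\simeq}\Ainf(Q(D),E)$; this lets me replace $D$ by its cofibrant model and work throughout with $Q(D)$.

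Next I would construct the comparison dg-functor $\Phi\colon \Ainf(Q(D),E)\to Mod^{\bullet}(Q(D),E)$ sending an $\Ainf$-functor $F$ to its graph bimodule $\Gamma_F$, defined on objects by $\Gamma_F(x,y)=Hom^{\bullet}_{E}(y,F(x))$, with right $E$-action given by composition in $E$ and left $Q(D)$-action assembled from the higher components $F_n$ of the $\Ainf$-functor together with the products $m_k$ of $E$. On morphism complexes, a pre-natural transformation $r=\{r_n\}$ is sent to the corresponding bimodule map. One checks that $\Phi$ is a dg-functor and that each $\Gamma_F$ is right quasi-representable, since $\Gamma_F(x,-)=Hom^{\bullet}_{E}(-,F(x))$ is the representable module $h_{F(x)}$; in particular $\Phi$ factors through $Mod^{\bullet}(Q(D),E)^{rqr}$.

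The heart of the argument is to show that $\Phi$ is a quasi-equivalence. For essential surjectivity, a right quasi-representable bimodule $M$ assigns to each object $x$ a representing object $F(x)$ in $E$, well defined up to quasi-isomorphism, and the left $Q(D)$-action of $M$ reconstructs, through the quasi-representability quasi-isomorphisms, the higher structure maps of a unital $\Ainf$-functor $F$ with $M\simeq \Gamma_F$. For quasi-full-faithfulness I would compute the derived bimodule morphism complex using the bar resolution of $\Gamma_F$: maps out of $\mathrm{Bar}(\Gamma_F)$ into $\Gamma_G$ form precisely the complex of sequences $\{r^d_n\}$ of Proposition \ref{p4}, and a direct comparison shows that the induced differential is exactly the $m_1$ formula recalled in the remark above. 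Hence $Hom^{\bullet}_{\Ainf(Q(D),E)}(F,G)$ computes $\mathbb{R}Hom^{\bullet}_{Mod}(\Gamma_F,\Gamma_G)$, giving the required quasi-isomorphism on Hom complexes.

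Finally I would reconcile this with the $Int(-)$ construction. Since $Int$ selects the fibrant-cofibrant bimodules, I would verify that after a cofibrant replacement the graph bimodules exhaust, up to quasi-isomorphism, all fibrant-cofibrant right quasi-representable bimodules, so that $\Phi$ induces an equivalence of homotopy categories and therefore a quasi-equivalence of dg-categories, which is the desired isomorphism in $Ho(dgCat)$. The main obstacle I anticipate is the quasi-full-faithfulness step: matching the explicit pre-natural-transformation complex, with the precise signs and the $m_1,m_2$ formulas of \cite{Sei}, against the derived bimodule Hom computed through the bar resolution, that is, checking that the chosen resolution is small enough to reproduce the $\{r_n\}$ complex on the nose and that the two differentials agree after accounting for the sign conventions.
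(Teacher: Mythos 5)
Your overall strategy---comparing both sides with T\"{o}en's dg-category of right quasi-representable bimodules---matches the paper's, but your central construction has a genuine gap. The graph $\Gamma_F(x,y)=Hom^{\bullet}_{E}(y,F(x))$ of an $\Ainf$-functor $F\colon Q(D)\to E$ is \emph{not} an object of $Mod^{\bullet}(Q(D),E)$. A strict dg-bimodule carries only a binary action subject to strict associativity, whereas the action you describe, assembled from the higher components $F_n$ and the composition of $E$, necessarily has nonvanishing higher action maps
\[
Hom^{\bullet}_{Q(D)}(x_{n-1},x_n)\otimes \cdots \otimes Hom^{\bullet}_{Q(D)}(x_0,x_1)\otimes \Gamma_F(x_0,y)\to \Gamma_F(x_n,y)
\]
as soon as some $F_n$ with $n\ge 2$ is nonzero; the binary part built from $F_1$ alone is associative only up to the homotopies supplied by $F_2$. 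Hence your $\Phi$ is well defined only as a dg-functor into the dg-category $\Cinf(Q(D),E)$ of $\Ainf$-bimodules---this is precisely the functor the paper calls $z$, whose essential image is identified with the right quasi-representable $\Ainf$-bimodules by the theorem of Lyubashenko and Manzyuk. The same defect propagates to your full-faithfulness step: taking a bar resolution of $\Gamma_F$ \emph{inside the strict bimodule category} makes no sense before $\Gamma_F$ has been strictified, so the obstacle is not the sign bookkeeping you anticipate but the existence of the object you want to resolve.

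What your outline is missing is exactly the bridge between strict dg-bimodules and $\Ainf$-bimodules, and that bridge is the technical heart of the paper's proof. There one replaces the abstract cofibrant replacement $Q(D)$ by the enveloping dg-category $U(D)=(\Omega(B(\overline{D})))^+$ (itself a cofibrant replacement), extends the twisted-tensor-product functor $R_{\tau}(D,E)$ and the comparison functor $J(D,E)$ of Lef\`evre-Hasegawa from functors between ordinary module categories to dg-functors between the dg-enhancements $Mod^{\bullet}(U(D),U(E))$ and $\Cinf(D,E)$, and proves (proposition \ref{pr2}) that the resulting composite restricts to a quasi-equivalence $Int((Mod^{\bullet}(U(D),E))^{rqr})\to (\Cinf(D,E))^{rqr}$; combined with the equivalence $z\colon \Ainf(D,E)\xrightarrow{\sim}\Cinf(D,E)^{rqr}$ this yields the theorem. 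Your proposal treats this rectification as free, in effect assuming that an $\Ainf$-functor has a canonical strict graph bimodule. If you repair $\Phi$ by routing it through this bar--cobar machinery you essentially reproduce the paper's argument; without it, the comparison functor you write down does not exist.
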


\subsection{Reminder on $\Ainf$-bimodules.}
In order to give a proof of theorem \ref{th2}, we need to recall the language of $\Ainf$-modules and bimodules. Given a dg-category $E$ there is a dg-category $\Cinf(E)$ whose objects are $\Ainf$-modules on $E$ (see appendix \ref{APPB}). The category $Z^0(\Cinf(E))=Mod_{\infty}(E)$ comes equipped with a notion of weak-equivalences \cite{LH}. As in the case of dg-modules, there is an $\Ainf$-Yoneda embedding, 
\medskip
\[
h^{\infty}:E\to \mathcal{C}_{\infty}(E)
\]
which, for dg-categories, is a dg-functor sending every object of $E$ into its representable dg-module. This dg-functor induces a dg-functor
\medskip
\[
h^{\infty}_{\ast}: \Ainf(D,E)\to \Ainf(D,\Cinf(E)) 
\]
Similarly, if $D$ and $E$ are dg-categories, there exists a dg-category of $\Ainf$-bimodules, $\Cinf(D,E)$ (see appendix \ref{APPB}). We give the following definition.
\medskip
\begin{defi}\label{def1}
An $\Ainf$-bimodule $M$ is called right quasi-representable if, for every $x\in Ob(D)$, the induced $\Ainf$-module $M(x,-)$ is weakly-equivalent in $Mod_{\infty}(E)$ to $h^{\infty}(y(x))$ for some $y(x)\in Ob(E)$.  
\end{defi}
\medskip
A relevant feature of $\Cinf(D,E)$ is that there exists a natural dg-functor
\medskip
\[
z: \Ainf(D,E)\to \Cinf(D,E)
\]
which induces quasi-isomorphisms on each chain complex of morphisms \cite{LH}. Such dg-functor is obtained by composing $h^{\infty}_{\ast}$ with an isomorphism of dg-categories \cite{LH}
\medskip
\[
\Ainf(D,\Cinf(E))\to \Cinf(D,E)
\] 
This construction can be equivalently defined using the notion of $\Ainf$-bifunctors as done in \cite{LyMa}. A result of Lyubashenko and Manzyuk \cite{LyMa} allows to characterize the essential image of $z$.
\medskip
\begin{proposition}
An $\Ainf$-bimodule $M\in  \Cinf(C,D)$ lies in the essential image of $z$ if and only if it is right quasi-representable.
\end{proposition}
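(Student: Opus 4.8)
The plan is to prove the two inclusions separately, and essentially all of the content lies on one side. First I would dispatch the inclusion ``essential image of $z$ $\subseteq$ right quasi-representable,'' which is a direct computation. By construction $z=j\circ h^{\infty}_{\ast}$, where $j\colon \Ainf(D,\Cinf(E))\to \Cinf(D,E)$ denotes the isomorphism of dg-categories recalled above; hence for an $\Ainf$-functor $f\in\Ainf(D,E)$ the bimodule $z(f)$ satisfies, for each $x\in Ob(D)$, $z(f)(x,-)=h^{\infty}(f(x))$, the representable module at $f(x)$. Thus $z(f)$ is even strictly right quasi-representable, with $y(x)=f(x)$. Since right quasi-representability is defined only up to weak equivalence in $Mod_{\infty}(E)$, any bimodule $M\simeq z(f)$ satisfies $M(x,-)\simeq h^{\infty}(f(x))$ and is therefore right quasi-representable as well. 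This settles one direction.

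For the converse I would reinterpret right quasi-representable bimodules functorially. Let $\Cinf(E)^{rep}\subset \Cinf(E)$ be the full sub-dg-category whose objects are the $\Ainf$-modules weakly equivalent in $Mod_{\infty}(E)$ to a representable one. Under $j$ a bimodule $M$ corresponds to an $\Ainf$-functor $G\colon D\to\Cinf(E)$ with $G(x)=M(x,-)$, and $M$ is right quasi-representable exactly when every object $G(x)$ lies in $\Cinf(E)^{rep}$. Because $\Cinf(E)^{rep}$ is a \emph{full} subcategory, such a $G$ factors through it as an $\Ainf$-functor. Hence the right quasi-representable bimodules are precisely the objects of $\Ainf(D,\Cinf(E)^{rep})$, viewed inside $\Cinf(D,E)$ through $j$.

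The key input is then the homotopy invariance of $\Ainf$-functor categories in the target variable. The $\Ainf$-Yoneda embedding induces quasi-isomorphisms on Hom-complexes, and by the very definition of $\Cinf(E)^{rep}$ it is essentially surjective onto it; thus $h^{\infty}$ restricts to a quasi-equivalence $E\xrightarrow{\simeq}\Cinf(E)^{rep}$. I would then invoke the fact that a quasi-equivalence $B\to B'$ of unital $\Ainf$-categories induces a quasi-equivalence $\Ainf(D,B)\to\Ainf(D,B')$ \cite{LH},\cite{Ly},\cite{LyMa}; applied to $h^{\infty}$, this makes $h^{\infty}_{\ast}\colon \Ainf(D,E)\to \Ainf(D,\Cinf(E)^{rep})$ a quasi-equivalence, in particular essentially surjective up to natural $\Ainf$-isomorphism. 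Consequently any $G$ as above is naturally $\Ainf$-isomorphic to $h^{\infty}_{\ast}(f)$ for some $\Ainf$-functor $f\colon D\to E$, and translating this isomorphism back through the dg-isomorphism $j$ yields a weak equivalence $M\simeq z(f)$. Hence $M$ lies in the essential image of $z$, completing the converse.

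The main obstacle is precisely this homotopy invariance, i.e. the essential surjectivity of $h^{\infty}_{\ast}$ up to $\Ainf$-isomorphism. If I did not wish to cite it, I would prove it by the standard obstruction-theoretic induction: put $f(x):=y(x)$ on objects, choose module quasi-isomorphisms $M(x,-)\xrightarrow{\simeq} h^{\infty}(y(x))$, and then build the higher Taylor components $f_n$ of the $\Ainf$-functor together with the components of a natural $\Ainf$-isomorphism $h^{\infty}\circ f\Rightarrow G$ by induction on $n$. At each stage the obstruction to extending is a cocycle in a mapping complex assembled from $Hom^{\bullet}_{\Cinf(E)}$, and it is a coboundary because $h^{\infty}$ induces quasi-isomorphisms on Hom-complexes; the delicate points are the sign bookkeeping in the differential of $\Ainf(D,\Cinf(E))$ recalled above and the verification that $f$ may be taken unital. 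This inductive vanishing, rather than either inclusion itself, is the technical heart of the proposition.
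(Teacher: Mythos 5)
Your proof is essentially correct, but you should know that the paper itself offers no argument for this proposition: it is invoked as a result of Lyubashenko--Manzyuk \cite{LyMa}, whose proof goes through their $\Ainf$-bifunctor formalism and representability theorem. Your route is therefore a genuine alternative rather than a reconstruction of the paper's proof. The forward direction is exactly as you say: under the dg-isomorphism $j\colon\Ainf(D,\Cinf(E))\to\Cinf(D,E)$ one has $z(f)(x,-)=h^{\infty}(f(x))$, and restriction at a fixed object of $D$ is a dg-functor, so an $H^0$-isomorphism of bimodules restricts to $H^0$-isomorphisms of modules; the one point you should make explicit is that isomorphism in $H^0(\Cinf(E))$ agrees with weak equivalence in $Mod_{\infty}(E)$, which holds because quasi-isomorphisms of $\Ainf$-modules over a field are homotopy equivalences. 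For the converse, factoring $G=j^{-1}(M)$ through the full sub-dg-category $\Cinf(E)^{rep}$ and invoking invariance of $\Ainf(D,-)$ under unital quasi-equivalences of the target is a clean and correct packaging: $h^{\infty}\colon E\to\Cinf(E)^{rep}$ is cohomologically fully faithful by $\Ainf$-Yoneda and essentially surjective on $H^0$ by construction, and since unital quasi-equivalences over a field admit quasi-inverses, $h^{\infty}_{\ast}$ is essentially surjective on $H^0$ of the unital functor categories, which is all you need. What your argument buys is a self-contained proof that isolates the single nontrivial input (homotopy invariance of $\Ainf$-functor categories in the target variable, plus Yoneda) and correctly identifies it, together with the unitality of the resulting $f$, as the technical heart; what the paper's citation buys is outsourcing precisely those points, with all sign and unit bookkeeping, to \cite{LyMa}.
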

\medskip
In paricular, let $\Cinf(D,E)^{rqr}$ be the full dg-subcategory of $\Cinf(D,E)$ whose objects are right quasi-representable $\Ainf$-bimodules, we then have
\medskip
\begin{proposition}\label{pr1}
Given dg-categories $D$ and $E$ the dg-functor $z$ induces natural dg-equivalences
\[
z: \Ainf(D,E)\xrightarrow{\sim} \Cinf(D,E)^{rqr}
\]
\end{proposition}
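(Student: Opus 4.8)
The plan is to obtain the statement as a formal consequence of the two facts just recalled, by checking that $z$, viewed as a functor into the full dg-subcategory $\Cinf(D,E)^{rqr}$, is a quasi-equivalence. Matching the weak-equivalences of the Tabuada structure, a dg-functor $F$ is a dg-equivalence exactly when it is quasi-fully-faithful, i.e.\ it induces quasi-isomorphisms on all complexes of morphisms, and $H^0(F)$ is essentially surjective. I would verify these two conditions separately.

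Quasi-full-faithfulness is immediate: by the result of \cite{LH} recalled above, $z \colon \Ainf(D,E) \to \Cinf(D,E)$ induces quasi-isomorphisms on each complex of morphisms. Because $\Cinf(D,E)^{rqr}$ is a \emph{full} dg-subcategory, the morphism complexes between objects $zf$ and $zg$ are unchanged, so the corestricted functor $z \colon \Ainf(D,E) \to \Cinf(D,E)^{rqr}$ is still quasi-fully-faithful. That the image indeed lands in $\Cinf(D,E)^{rqr}$ is part of the Lyubashenko--Manzyuk characterization of the essential image.

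For essential surjectivity I would invoke the preceding proposition of Lyubashenko and Manzyuk \cite{LyMa}: the essential image of $z$ is exactly the class of right quasi-representable bimodules, that is, all objects of $\Cinf(D,E)^{rqr}$. The step that genuinely needs attention, and which I view as the crux, is to pass from this statement --- phrased through weak-equivalence of $\Ainf$-modules as in Definition \ref{def1} --- to the essential surjectivity of $H^0(z)$. For this one uses that in the $\Ainf$-setting a quasi-isomorphism of $\Ainf$-(bi)modules is automatically a homotopy equivalence, so that weakly-equivalent bimodules become isomorphic in $H^0(\Cinf(D,E))$. This is precisely the feature that makes the $\Ainf$-model more economical than T\"{o}en's, which must restrict to fibrant-cofibrant modules via the $Int$ construction. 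Granting it, every object of $\Cinf(D,E)^{rqr}$ is $H^0$-isomorphic to some $zf$, so $H^0(z)$ is essentially surjective.

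Combining the two clauses, $z \colon \Ainf(D,E) \to \Cinf(D,E)^{rqr}$ is a quasi-equivalence, hence the asserted dg-equivalence; naturality in $D$ and $E$ follows from the functoriality of the construction of $z$ in its two arguments. The formal assembly is routine; the only real content beyond the cited results is the identification of module weak-equivalences with $H^0$-isomorphisms, which is where I expect the main (though mild) obstacle to lie.
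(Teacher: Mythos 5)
Your proposal is correct and follows the paper's own route: the paper deduces Proposition \ref{pr1} directly ("in particular") from the two facts you cite, namely that $z$ is quasi-fully-faithful by \cite{LH} and that its essential image consists exactly of the right quasi-representable bimodules by \cite{LyMa}. Your extra care in translating weak-equivalence of $\Ainf$-bimodules into $H^0$-isomorphism is exactly the identification $Mod_{\infty}(D,E)[W^{-1}]\simeq H^0(\Cinf(D,E))$ recorded in the paper's appendix, so nothing beyond the cited results is needed.
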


\subsection{The enveloping dg-category.}
In this section we will describe a dg-functor
\medskip
\[
Mod^{\bullet}(U(D),E)\xrightarrow{} C_{\infty}(D,E)
\]
where $U(D)$ is a particular cofibrant replacement of the dg-category $D$. The restriction of this dg-functor to right quasi-representable bimodules will provide an equivalence of dg-categories
\medskip
\[
Int((Mod^{\bullet}(U(D),E)))^{rqr})\to (\Cinf(D,E))^{rqr}
\]
The combination of this result and proposition \ref{pr1} gives a proof of theorem \ref{th2}. The particular cofibrant replacement $U(D)$ is the enveloping dg-category, that can be defined more generally for any $\Ainf$-category. It has the property that (see appendix \ref{APPA})
\medskip
\[
Hom_{dgCat}(U(A),C)\simeq Hom_{\Ainf Cat}(A,D)
\]
for any $\Ainf$-category $A$ and dg-category $D$. This construction is still meaningful just for dg-categories because it allows to compare dg-bimodules with $\Ainf$-bimodules. 
\medskip
\begin{defi}
Given a dg-category $D$, its enveloping dg-category $U(D)$ is
\medskip
\[
U(D)=(\Omega(B(\overline{D})))^+
\]
where $\Omega$ is the cobar construction, $B$ is the bar construction and $\overline{D}$ is the reduction of the dg-category $D$.
\end{defi}
\medskip
\begin{remark}
$U(D)$ is a dg-category with the same objects of $D$. Each  complex of morphisms in $U(D)$ is the free tensor algebra over the graded vector spaces $B(\overline{D})(x,y)[-1]$ and hence $U(D)$ is a cofibrant dg-category in $(dgCat, Tab)$. Moreover, there is a canonical weak-equivalence  of dg-categories
\medskip
\[
\gamma_D: U(D)\to D
\]
and $(\gamma_D,U(D))$ provides a cofibrant replacement of any dg-category $D$. This is a model for the so called standard resolution $stand(D)$ of \cite{Dr}. The dg-equivalence $\gamma_D$ is determined by its restriction to $B(\overline{D})$ which is itself determined by $Id_{B(\overline{D})}$ projected onto the quiver determined by $\overline{D}$. Explicitly, it sends an object into itself and is defined on morphisms
\medskip
\[
\gamma_D: Hom^{\bullet}_{U(D)}(y_0,y_1)\to Hom^{\bullet}_{D}(y_0,y_1) 
\]
on an element (with abuse of notation) $v_1\otimes \cdots \otimes v_k\in (B(\overline{D})[-1])^{\otimes k}(y_0,y_1)$ by
\medskip
\[
\gamma_D(v_1\otimes \cdots \otimes v_k)=v_1\circ \cdots \circ v_k
\]
if each $v_i\in Hom_{\overline{D}}(y_{i-1},y_i)$ for some pair of objects $(y_{i-1},y_i)$, and $\gamma_D(v_1\otimes \cdots \otimes v_k)=0$ otherwise. It sends moreover the unit into the unit. We discuss now how the enveloping dg-category relates categories of $\Ainf$-bimodules to categories of dg-bimodules. Given $D$ a dg-category, there is a natural commutative diagram of functors
\medskip
\[
 \begin{tikzpicture}
    \def\x{1.5}
    \def\y{-1.2}
    \node (A2_2) at (4*\x, 2.5*\y) {$Mod_{\infty}(D)$};
    \node (A2_1) at (2*\x, 1*\y) {$Mod(U(D))$};
    \node (A1_2) at (0*\x, 2.5*\y) {$CoMod(B^+(D))$};
   
     \path (A2_1) edge [->] node [auto] {$\scriptstyle{J_{D}}$} (A2_2);
     \path (A2_1) edge [->] node [auto,swap] {$\scriptstyle{R_{\tau}(D)}$} (A1_2);
   \path (A2_2) edge [->] node [auto,swap] {$\scriptstyle{B_{D}}$} (A1_2);
          \end{tikzpicture}
\]
Here $Mod(U(D))$ is the category of dg-modules on $U(D)$, $CoMod(B^+(D))$ is the category of dg-comodules over the coaugmented dg-cocategory $B^+(D)$ and $Mod_{\infty}(D)=Z^0(\Cinf(D))$ is the category of $\Ainf$-modules on $D$. Each of those categories comes equipped with a notion of weak-equivalences and those functors induce equivalences on the respective localizations. For more details we refer to appendix \ref{APPB} and to \cite{LH}. We have the following lemma describing the behavior of $J_{D}$ with respect to representable objects.
\end{remark}
\medskip
\begin{lemma}\label{le1}
Consider the composition
\medskip
\[
J_{D}\circ \gamma_D^{\ast}: Mod(D)\to Mod(U(D))\to Mod_{\infty}(D)
\]
where 
\medskip
\[
\gamma_D^{\ast}: Mod(D)\to Mod(U(D))
\]
is the pullback functor along the equivalence $\gamma_D$. Then, the image of a dg-module $M$ is the underlying quiver of $M$ with $\Ainf$-module structure
\medskip
\[
m_i:M(y_0)\otimes Hom_{D}(y_0,y_1)\otimes \cdots \otimes Hom_{D}(y_{i-2},y_{i-1})\to M(y_{i-1}) 
\]
given by
\medskip
\[ 
\left \{
  \begin{tabular}{ccc}
  $m_1(m)=m_1^{M}(m)$ \\
  $m_2(m,\beta_{01})=\sigma(m,\beta_{01})$\\
  $m_i=0,  i>2$
  \end{tabular}
  \right.
\]
where $\sigma$ is the dg-action of $D$ on $M$ and $m_1^{M}$ is the differential of $M$. 
\end{lemma}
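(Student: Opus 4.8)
The plan is to make the two functors $J_{D}$ and $\gamma_D^{\ast}$ explicit on the level of $\Ainf$-structure maps and then read off the answer directly from the formula for $\gamma_D$ recalled above. First I would recall from \cite{LH} the precise description of $J_{D}$ on structure maps. Through the commuting triangle, $J_{D}$ is governed by the universal twisting cochain $\tau: B^+(D)\to U(D)$, under which a reduced bar element $[\beta_1|\cdots|\beta_{\ell}]$, with the $\beta_j\in\overline{D}$, is sent to the corresponding single tensor factor in the free tensor algebra $Hom^{\bullet}_{U(D)}$. Concretely, for a dg-module $N$ over $U(D)$ the $\Ainf$-module $J_{D}(N)$ has underlying quiver and differential $N$, and for $n\ge 2$ its structure map $m_n(-;\beta_1,\ldots,\beta_{n-1})$ is the $U(D)$-action of the single generator $[\beta_1|\cdots|\beta_{n-1}]\in B(\overline{D})[-1]$ of bar-length $n-1$. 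Thus $m_1$ records the differential and, for $n\ge 2$, the $n$-th structure map is controlled by a single cobar tensor factor whose bar-length is exactly $n-1$.

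Next I would compute $\gamma_D^{\ast}(M)$ for a dg-module $M$. Since $\gamma_D$ is the identity on objects, $\gamma_D^{\ast}(M)$ has the same underlying quiver and differential $m_1^{M}$ as $M$, and the action of a morphism $w\in Hom^{\bullet}_{U(D)}(y_0,y_1)$ is $\sigma(-,\gamma_D(w))$, with $\sigma$ the dg-action of $D$. Combining this with Step 1, the differential of $J_{D}\gamma_D^{\ast}(M)$ is $m_1^{M}$, so $m_1=m_1^{M}$, while for $n\ge 2$ the structure map is $\sigma(-,\gamma_D([\beta_1|\cdots|\beta_{n-1}]))$. Now $[\beta_1|\cdots|\beta_{n-1}]$ is a single cobar tensor factor (the case $k=1$ in the formula for $\gamma_D$), so $\gamma_D$ is nonzero on it precisely when it is a length-one bar element, i.e. a single morphism of $\overline{D}$. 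For $n=2$ this yields $\gamma_D([\beta_1])=\beta_1$ and hence $m_2(-,\beta_1)=\sigma(-,\beta_1)$; for $n\ge 3$ the factor has bar-length $\ge 2$ and $\gamma_D$ annihilates it, giving $m_n=0$. This is exactly the asserted structure, and the $\Ainf$-module relations hold automatically because they reduce to the dg-module axioms for $M$.

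The main obstacle is Step 1: extracting the exact recipe for $J_{D}$ on structure maps from the identity $B_{D}\circ J_{D}=R_{\tau}(D)$, with correct signs and shifts, and in particular verifying that products of several cobar generators do not contribute additional terms to the $m_n$. This is where the freeness of the morphism complexes of $U(D)$ as tensor algebras and the defining property of the twisting cochain $\tau$ are essential; the action of a composite $w_1\otimes\cdots\otimes w_k$ factors through the separate actions of its generators, so only single generators feed into the primitive structure maps. Once this recipe and its sign conventions are pinned down, the remaining steps are a direct substitution into the explicit formula for $\gamma_D$.
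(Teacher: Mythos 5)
Your proposal is correct and follows essentially the same route as the paper: both arguments use the fact (from Lef\`evre-Hasegawa) that the $\Ainf$-module structure of $J_{D}(N)$ is determined by restricting the $U(D)$-action to the bar generators $B^+(D)[-1]\subset U(D)$ (equivalently, via the twisting cochain $\tau$), and then observe that $\gamma_D$ annihilates bar elements of length $\ge 2$, so only the differential and the length-one action $\sigma(-,\beta_1)$ survive, yielding $m_1=m_1^M$, $m_2=\sigma$, and $m_i=0$ for $i>2$. The paper handles the suspension bookkeeping exactly as you flag in your final paragraph, by passing through the degree-$1$ maps $b_i$ and "suspending and desuspending a suitable number of times," so your identified obstacle is dealt with in the same implicit, citation-reliant way.
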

\begin{proof}
The $\Ainf$-module $J_{D}\circ \gamma_D^{\ast}(M)$ is determined \cite{LH} by the dg-module $\gamma_D^{\ast}(M)$ by taking its restriction to $B^+(D)[-1]$
\medskip
\[
M\otimes B^+(D)[-1] \xrightarrow{Id\otimes i} M\otimes U(D) \to M 
\]
Such restriction, by the definition of $\gamma_D$, is given by $\sigma(m,v_1)$ for $v_1\in D$ and is $0$ otherwise. Hence, the $\Ainf$-module $J_{D}\circ \gamma_D^{\ast}(M)$ is determined by the composition
\medskip
\[
M\otimes B^+(D)\xrightarrow{(Id\otimes i)\circ (Id\otimes s^{-1})} M\otimes U(D) \to M 
\]
where $s:M\to M[1]$ is the degree $-1$ map of suspension. Such map determines morphisms of degree $1$
\medskip
\[
b_i:M(y_0)\otimes Hom_{D}(y_0,y_1)[1]\otimes \cdots \otimes Hom_{D}(y_{i-2},y_{i-1})[1]\to M(y_{i-1}) 
\]
which are $0$ for $i\ge 2$, and $b_1=m_1^{M}$, $b_2=\sigma(m,s^{-1}(\beta_{01}))$. Suspending and desuspending a suitable number of times we get the result.

\end{proof}

\subsection{Enveloping dg-categories and bimodules.}
Similar constructions exist in the setting of bimodules. Recall \cite{LH} that, for dg-categories $D$ and $E$, there exists a natural commutative diagram 
\medskip
\begin{equation}\label{d1}
 \begin{tikzpicture}
    \def\x{1.5}
    \def\y{-1.2}
    \node (A2_2) at (4*\x, 2.5*\y) {$Mod_{\infty}(D,E)$};
    \node (A2_1) at (2*\x, 1*\y) {$Mod(U(D),U(E))$};
    \node (A1_2) at (0*\x, 2.5*\y) {$CoMod(B^+(D), B^+(E))$};
   
     \path (A2_1) edge [->] node [auto] {$\scriptstyle{J(D,E)}$} (A2_2);
     \path (A2_1) edge [->] node [auto,swap] {$\scriptstyle{R_{\tau}(D,E)}$} (A1_2);
   \path (A2_2) edge [->] node [auto,swap] {$\scriptstyle{B(D,E)}$} (A1_2);
          \end{tikzpicture}
\end{equation}

Here 
\begin{itemize}
\item $Mod(U(D),U(E)))$ is the category of dg-bimodules on $U(D)$ and $U(E)$ \\
\item $CoMod(B^+(D), B^+(E))$ is the category of dg-bicomodules over the coaugmented dg-cocategories $B^+(D)$ and $B^+(E)$ \\
\item $Mod_{\infty}(D,E)=Z^0(\Cinf(D,E))$ is the category of $\Ainf$-bimodules on $D$ and $E$
\end{itemize}
Also in this case, those categories come equipped with a notion of weak-equivalences  with respect to which the functors in the diagram induce equivalences in the localizations. Lemma \ref{le1} has the following corollary.
\medskip
\begin{corollary}\label{cor1}
For any dg-categories $D$ and $E$ the composition 
\medskip
\[
\phi(D,E): Mod(U(D),E)\xrightarrow{(Id_{U(D)}\otimes \gamma_E)^{\ast}} Mod(U(D),U(E)) \xrightarrow{J(D,E)} Mod_{\infty}(D,E)
\]
restricts to a functor
\medskip
\[
Mod(U(D),E)^{rqr}\to Mod_{\infty}(D,E)^{rqr}
\]
\end{corollary}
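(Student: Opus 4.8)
The plan is to reduce the assertion to the module-level computation of Lemma \ref{le1}, exploiting the fact that right quasi-representability (Definition \ref{def1}) is checked one object of $D$ at a time and only sees the induced right $E$-module structure. So for a dg-bimodule $N\in Mod(U(D),E)^{rqr}$ I must show that for every $x$ the induced $\Ainf$-module $\phi(D,E)(N)(x,-)$ is weakly-equivalent to $h^{\infty}(y(x))$ for some $y(x)\in \ob(E)$.

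First I would fix an object $x\in \ob(D)=\ob(U(D))$ and analyze the induced right $E$-module of $\phi(D,E)(N)$. The two functors composing $\phi(D,E)$ act separately on the two variables: $(Id_{U(D)}\otimes \gamma_E)^{\ast}$ only modifies the $E$-action, restricting it along $\gamma_E\colon U(E)\to E$, while $J(D,E)$ is the bimodule incarnation of the functor $J_E$ appearing in the module diagram of the previous subsection. Consequently, freezing the left variable at $x$ --- which amounts to discarding all bimodule operations that read a positive number of inputs from the $U(D)$-side and retaining only the right-action part --- should identify the induced right $E$-module $\phi(D,E)(N)(x,-)$ with $(J_E\circ \gamma_E^{\ast})(N(x,-))$, i.e. with the module-level construction of Lemma \ref{le1} (with $E$ in place of $D$) applied to the right $E$-module $N(x,-)$. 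Making this identification precise, by tracking the explicit bar--cobar formulas and verifying that the right-hand operations are untouched by the left variable, is the technical heart of the argument.

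Granting this, I would invoke Lemma \ref{le1} for $E$: the $\Ainf$-module $(J_E\circ \gamma_E^{\ast})(M)$ has the same underlying graded quiver as $M$, with $m_1$ the differential, $m_2$ the action, and $m_i=0$ for $i>2$. Applied to a representable dg-module $M=h(y)$, this is exactly the representable $\Ainf$-module $h^{\infty}(y)$, since for a dg-category $E$ the $\Ainf$-Yoneda module $h^{\infty}(y)$ is precisely the representable complex with differential and composition and no higher terms. Moreover $\gamma_E^{\ast}$ is pullback along the weak-equivalence $\gamma_E$ and $J_E$ preserves weak-equivalences by the properties of the module diagram recalled from \cite{LH}, so the composite $J_E\circ \gamma_E^{\ast}$ sends weak-equivalences to weak-equivalences in $Mod_{\infty}(E)$.

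Finally I would combine these observations. If $N$ is right quasi-representable, then for each $x$ the right $E$-module $N(x,-)$ is weakly-equivalent to some representable $h(y(x))$; applying the weak-equivalence-preserving functor $J_E\circ \gamma_E^{\ast}$ and the identification above gives
\[
\phi(D,E)(N)(x,-)\;\simeq\;(J_E\circ \gamma_E^{\ast})(h(y(x)))\;=\;h^{\infty}(y(x))
\]
in $Mod_{\infty}(E)$, which is exactly the condition that $\phi(D,E)(N)$ be right quasi-representable. The main obstacle is the reduction of the second paragraph: verifying rigorously that the bimodule functor $\phi(D,E)$ decouples in the second variable into the module functor $J_E\circ \gamma_E^{\ast}$, so that fixing the left object genuinely collapses the $\Ainf$-bimodule structure onto the $\Ainf$-module structure produced by Lemma \ref{le1}.
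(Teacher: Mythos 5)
Your proposal is correct and follows essentially the same route as the paper: unwind right quasi-representability object by object, use that $J_E$ (composed with $\gamma_E^{\ast}$) preserves weak-equivalences, and apply Lemma \ref{le1} to identify the image of the representable dg-module $h(y(x))$ with the representable $\Ainf$-module. The ``technical heart'' you flag --- that fixing $x$ decouples $\phi(D,E)$ into the module-level functor $J_E\circ\gamma_E^{\ast}$ of Lemma \ref{le1} --- is exactly the step the paper's proof treats as implicit, so your write-up is if anything more careful than the original.
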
 
\begin{proof}
A bimodule $M\in Mod(U(D),E)$ is right quasi-representable if and only if, for every $x\in Ob(U(D))$, $M(x,-)\in Mod(E)$ is weakly-equivalent to the representable functor $h(y(x))$ for some $y(x)\in E$. This implies that their images in $Mod_{\infty}(E)$ are equivalent, because $J_{E}$ preserves weak-equivalences . By the lemma \ref{le1}, the image in $Mod_{\infty}(E)$ of $h(y(x))$ is itself with higher degree components of the $\Ainf$-module structure equal to $0$, which implies the result. 
\end{proof}
\medskip

We now upgrade the diagram (\ref{d1}) to the level of dg-categories. Recall that the categories of dg-bimodules, $\Ainf$-bimodules and dg-comodules admit a dg-enhancement (see appendix \ref{APPB}). We want to extend the diagram (\ref{d1}) to a commutative diagram of natural dg-functors
\medskip
\[
 \begin{tikzpicture}
    \def\x{1.5}
    \def\y{-1.2}
    \node (A2_2) at (4*\x, 2.5*\y) {$C_{\infty}(D,E)$};
    \node (A2_1) at (2*\x, 1*\y) {$Mod^{\bullet}(U(D),U(E))$};
    \node (A1_2) at (0*\x, 2.5*\y) {$CoMod^{\bullet}(B^+(D), B^+(E))$};
   
     \path (A2_1) edge [->] node [auto] {$\scriptstyle{J(D,E)}$} (A2_2);
     \path (A2_1) edge [->] node [auto,swap] {$\scriptstyle{R_{\tau}(D,E)}$} (A1_2);
   \path (A2_2) edge [->] node [auto,swap] {$\scriptstyle{B(D,E)}$} (A1_2);
          \end{tikzpicture}
\]
where the dg-categories in the diagram are the dg-enhancement of the respective categories of bimodules. The existence of the dg-functor $J(D,E)$ will allow us to define a dg-functor
\medskip
\[
Mod^{\bullet}(U(D),E)\xrightarrow{(Id_{U(D)}\otimes \gamma_E)^{\ast}} Mod^{\bullet}(U(D),U(E)) \xrightarrow{J(D,E)} C_{\infty}(D,E)
\]
which, by corollary \ref{cor1}, will restrict to a dg-functor on the dg-categories of right quasi-representable bimodules
\medskip
\[
(Mod(U(D),E)^{\bullet})^{rqr}\to C_{\infty}(D,E)^{rqr}
\]
Now, the functor $B(D,E)$ comes already from an isomorphism of dg-categories (see appendix \ref{APPB})
\medskip
\[
B(D,E): C_{\infty}(D,E)\to CoMod^{\bullet}(B^+(D), B^+(E))
\]
The functor
\medskip
\[
R_{\tau}(D,E):Mod(U(D),U(E))\to CoMod(B^+(D),B^+(E))
\]
is defined via the indentification 
\medskip
\begin{equation}\label{d2}
 \begin{tikzpicture}
    \def\x{1.5}
    \def\y{-1.2}
    \node (A2_2) at (4*\x, 2.5*\y) {$CoMod(B^+(D)\otimes B^+(E)^{op})$};
    \node (A2_1) at (4*\x, 1*\y) {$CoMod(B^+(D),B^+(E))$};
    \node (A1_2) at (0*\x, 2.5*\y) {$Mod(U(D)\otimes U(E)^{op})$};
    \node(A1_1) at (0*\x, 1*\y) {$Mod(U(D),U(E))$};
   
   \path (A1_1) edge [->] node [auto,swap] {$\scriptstyle{R_{\tau}(D,E)}$} (A2_1);
   \path (A1_1) edge [->] node [auto,swap] {$\scriptstyle{\simeq}$} (A1_2);
   \path (A1_2) edge [->] node [auto,swap] {$\scriptstyle{R_{\tau}}$} (A2_2);
    \path (A2_1) edge [->] node [auto,swap] {$\scriptstyle{\simeq}$} (A2_2);
          \end{tikzpicture}
\end{equation}
where
\medskip
\[
R_{\tau}:Mod(U(D)\otimes U(E)^{op})\to CoMod(B^+(D)\otimes B^+(E)^{op})
\]
sends a dg-module $M\in Mod(U(D)\otimes U(E)^{op})$ in its twisted by $\tau$ tensor product, given by the comodule $M\otimes B^+(D)\otimes B^+(E)^{op}$ with differential twisted by $\tau$ according to the formula
\medskip
\[
b_{\tau}=b_{M}\otimes Id + Id_M \otimes b+(\sigma_M\otimes Id)\circ (Id_M\otimes \tau \otimes Id)\circ (Id_M\otimes \Delta)
\]
Here $\tau$ is a certain acyclic twisted cochain \cite{LH}, $b$, $Id$ and $\Delta$ are differential, identity and cocomposition on $B^+(D)\otimes B^+(E)^{op}$ and $\sigma$ is the dg-action of $U(D)\otimes U(E)^{op}$ on $M$. We have the following proposition.
\medskip
\begin{proposition}
The functor 
\medskip
 \[
 R_{\tau}(D,E): Mod(U(D),U(E))\to CoMod(B^+(D), B^+(E))
 \]
 admits an extension to a dg-functor
 \medskip
 \[
 R_{\tau}(D,E): Mod^{\bullet}(U(D), U(E))\to CoMod^{\bullet}(B^+(D), B^+(E))
 \]
\end{proposition}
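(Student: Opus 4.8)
The plan is to extend $R_{\tau}(D,E)$ on Hom-complexes by the rule ``tensor with the identity of the cocategory'', and then to verify that this rule is a chain map and is strictly functorial. First I would reduce to the one-sided case: the vertical identifications in diagram (\ref{d2}), between bimodules and left modules over $U(D)\otimes U(E)^{op}$ and between bicomodules and comodules over $B^+(D)\otimes B^+(E)^{op}$, are isomorphisms of categories which evidently respect the full graded Hom-spaces and their differentials, hence promote to isomorphisms of the dg-enhancements $Mod^{\bullet}$ and $CoMod^{\bullet}$. It therefore suffices to dg-extend the one-sided functor $R_{\tau}\colon Mod(U)\to CoMod(B)$, where I abbreviate $U=U(D)\otimes U(E)^{op}$ and $B=B^+(D)\otimes B^+(E)^{op}$.

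On objects, $R_{\tau}$ sends a dg-module $M$ to the cofree graded $B$-comodule $M\otimes B$ with coaction $Id_M\otimes \Delta$, equipped with the twisted differential $b_\tau=b_M\otimes Id+Id_M\otimes b+(\sigma_M\otimes Id)(Id_M\otimes \tau\otimes Id)(Id_M\otimes \Delta)$. The key structural point is that $\tau$ alters only the differential and not the underlying graded comodule. Accordingly, for a homogeneous $f\in Hom^{k}_{Mod^{\bullet}(U)}(M,N)$, I define $R_{\tau}(f):=f\otimes Id_{B}$. Since $f\otimes Id_B$ commutes with $Id\otimes\Delta$ on the nose, it is a degree-$k$ morphism of graded $B$-comodules, i.e. an element of $Hom^{k}_{CoMod^{\bullet}(B)}(R_\tau M,R_\tau N)$; and on degree-$0$ cocycles it reproduces the original $R_{\tau}(D,E)$, so this is a genuine extension.

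The heart of the matter is to show that $f\mapsto f\otimes Id_B$ intertwines the commutator differentials on the two Hom-complexes. Expanding the graded commutator $b_\tau^{N}\circ(f\otimes Id)-(-1)^{k}(f\otimes Id)\circ b_\tau^{M}$ into the three summands of $b_\tau$, I expect the following: the algebra-differential terms $b_\bullet\otimes Id$ collect into $(b_N f-(-1)^{k}f b_M)\otimes Id$, which is precisely $R_{\tau}$ applied to the differential of $f$; the coalgebra terms $Id\otimes b$ cancel by the Koszul sign rule, as $f\otimes Id$ and $Id\otimes b$ act on disjoint tensor factors; and the two twisting terms cancel against one another. This final cancellation is where the compatibility of $f$ with the action is used: the twisting term applied after $f\otimes Id$ and the twisting term applied before it differ only by sliding $f$ past the action $\sigma$, the cochain $\tau$, and the coproduct $\Delta$, none of which touch the $M$-factor except through $\sigma$, so the compatibility $f(\sigma_M(m\otimes u))=(-1)^{k|u|}\sigma_N(f(m)\otimes u)$ of $f$ with the action, combined with the Koszul signs accrued in commuting $f$ past $\tau\otimes Id$, supplies exactly the sign $(-1)^{k}$ needed for the two terms to annihilate.

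Strict functoriality is then immediate, since $(g\otimes Id)(f\otimes Id)=(gf)\otimes Id$ and $Id_M\otimes Id_B=Id_{M\otimes B}$, so $R_{\tau}$ preserves composition and units exactly. I expect the main obstacle to be the Koszul-sign bookkeeping in the cancellation of the twisting terms, together with fixing a sign convention for the action of $U$ on degree-$k$ module maps and for $\tau$ that is consistent with the convention defining $b_\tau$; once this is pinned down the cancellation is forced, and it is the only genuinely computational step of the proof.
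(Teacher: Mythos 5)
Your proposal is correct and follows essentially the same route as the paper: reduce to the one-sided case via the dg-isomorphisms underlying the vertical arrows of diagram (\ref{d2}), extend $R_{\tau}$ on Hom-complexes by $f\mapsto f\otimes \mathrm{Id}_{B}$, and verify the chain-map property by expanding the commutator against the three summands of $b_{\tau}$, with the twisting terms cancelling by the compatibility of $f$ with the module action. The only cosmetic difference is your Koszul-signed form of that compatibility, whereas the paper's convention for morphisms in $Mod^{\bullet}$ imposes strict commutation with $\sigma$, which makes the cancellation sign-free; your added check of strict functoriality is left implicit in the paper.
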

\begin{proof}
We show that the functor
\medskip
\[
 R_{\tau}:Mod(U(D)\otimes U(E)^{op})\to CoMod(B^+(D)\otimes B^+(E)^{op})
 \]
admits an extension to a dg-functor. Indeed, this is enough because the vertical arrows of the diagram (\ref{d2}) come from isomorphisms of dg-categories. Fix dg-modules $M_0,M_1$ and consider
\medskip
\[
R_{\tau}: Hom^{\bullet}_{Mod^{\bullet}(U(D)\otimes U(E)^{op})}(M_0,M_1)\to Hom^{\bullet}_{CoMod^{\bullet}(B^+(D)\otimes B^+(E)^{op})}(R_{\tau}(M_0)R_{\tau}(M_1))
\]
given on an element $r$ by
\medskip
\[
R_{\tau}(r)=r\otimes Id
\]
where $Id$ is the identity map of $B^+(D)\otimes B^+(E)^{op}$. It is easy to check that this defines a graded map of degree $0$. This map is compatible with the differential in the sense that
\medskip
\[
d(r)\otimes Id=d(r\otimes Id)
\]
We set
\medskip
\[
t_{\tau}=(\sigma_M\otimes Id)\circ (Id_M\otimes \tau \otimes Id)\circ (Id_M\otimes \Delta)
\] 
and, by definition of the differential, we have
\medskip
\[
\begin{array}{lcl}
d(r\otimes Id)=d_{R_{\tau}(M_1)}\circ (r\otimes Id) - (-1)^{d} (r\otimes Id)\circ d_{R_{\tau}(M_0)}=(d_{M_1}\otimes Id+ \\
\\
+Id_{M_1}\otimes b+t_{\tau})\circ (r\otimes Id) - (-1)^{d}(r\otimes Id)\circ (d_{M_0}\otimes Id+Id_{M_0}\otimes b+t_{\tau})=\\
\\
=(d_{M_1}\circ r)\otimes Id + (-1)^{d} (r\otimes b)+ t_{\tau}\circ (r\otimes Id) - (-1)^{d} (r\circ d_{M_0})\otimes Id -\\
\\
-(-1)^{d} (r\otimes b) -(-1)^{d} (r\otimes Id)\circ t_{\tau}=(d_{M_1}\circ r)\otimes Id + t_{\tau}\circ (r\otimes Id) - \\
\\
-(-1)^{d} (r\circ d_{M_0})\otimes Id -(-1)^{d} (r\otimes Id)\circ t_{\tau}=d(r)\otimes Id + \\
\\
+(-1)^d (\sigma_{M_1}\circ r)\otimes (\tau \otimes Id)\circ \Delta -(-1)^d (r\circ \sigma_{M_0})\otimes (\tau \otimes Id)\circ \Delta=d(r)\otimes Id 
\\
\end{array}
\]
\end{proof}

\subsection{Enveloping dg-category and quasi-representability.}
The existence of an extension of $R_{\tau}(D,E)$ to a dg-functor implies the existence of an extension of the functor $J(D,E)$ to a dg-functor
\medskip
\[
J(D,E): Mod^{\bullet}(U(D),U(E))\to \Cinf(D,E)
\]
The extension of $J(D,E)$ is given by
\medskip
\[
J(D,E)=B(D,E)^{-1}\circ R_{\tau}(D,E)
\]
where $B(D,E)^{-1}$ is the inverse of the dg-isomorphism $B(D,E)$. Consider the composition of dg-functors
\medskip
\[
\phi(D,E): Mod^{\bullet}(U(D),E)\xrightarrow{(Id_{U(D)}\otimes \gamma_E)^{\ast}} Mod^{\bullet}(U(D),U(E)) \xrightarrow{J(D,E)} C_{\infty}(D,E)
\]
Corollary \ref{cor1} implies that this dg-functor restricts to a dg-functor 
\medskip
\[
\phi(D,E): (Mod(U(D),E)^{\bullet})^{rqr}\to C_{\infty}(D,E)^{rqr}
\]
We have the following important proposition relating the derived enrichment described by T\"{o}en in \cite{To} and the dg-category $C_{\infty}(D,E)^{rqr}$.

\medskip
\begin{proposition}\label{pr2}
Given dg-categories $D,E$, the restriction of the dg-functor $\phi(D,E)$ to fibrant and cofibrant right quasi-representable dg-bimodules
\medskip
\[
\phi(D,E): Int((Mod^{\bullet}(U(D),E)))^{rqr})\to (\Cinf(D,E))^{rqr}
\]
is a natural equivalence of dg-categories.
\end{proposition}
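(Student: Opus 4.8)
The plan is to establish the two conditions that characterise a dg-equivalence: that $\phi(D,E)$ is quasi-fully faithful, meaning it induces quasi-isomorphisms on every morphism complex, and that the induced functor $H^0(\phi(D,E))$ is essentially surjective onto $H^0(\Cinf(D,E)^{rqr})$. A useful preliminary observation is that the right quasi-representable objects do not form a subcategory closed under shifts, since a shift of a representable is no longer representable, so I would not try to prove quasi-full faithfulness directly on the restricted categories. Instead I would first prove it for the unrestricted dg-functor $\phi(D,E): Mod^\bullet(U(D),E) \to \Cinf(D,E)$ and then simply restrict, since morphism complexes in a full dg-subcategory are unchanged.

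For quasi-full faithfulness on the unrestricted categories I would argue that $\phi(D,E)$ induces an equivalence on localizations and then upgrade this to the level of morphism complexes. The factor $(Id_{U(D)} \otimes \gamma_E)^*$ is restriction of scalars along the quasi-equivalence $\gamma_E$, hence induces an equivalence of derived categories of bimodules, while $J(D,E) = B(D,E)^{-1} \circ R_\tau(D,E)$ induces an equivalence on localizations by the properties of diagram (\ref{d1}) recalled above, with $B(D,E)$ a dg-isomorphism. Since both source and target are dg-categories of (bi)modules, which are closed under shifts and whose $H^0$ are the corresponding derived categories, the identity $H^n(Hom^\bullet(M,N)) \cong Hom_{H^0}(M, N[n])$ shows that a dg-functor commuting with shifts and inducing an equivalence on $H^0$ automatically induces quasi-isomorphisms on all morphism complexes. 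Alternatively, and more explicitly, one can see quasi-full faithfulness of $R_\tau(D,E)$ from the formula $R_\tau(r) = r \otimes Id$ of the previous proposition, the relevant quasi-isomorphism then being exactly the statement that twisting by the acyclic cochain $\tau$ computes the derived Hom.

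It remains to prove essential surjectivity. Here I would combine the fact that $\phi(D,E)$ induces an equivalence on localizations with the bookkeeping of right quasi-representability provided by Corollary \ref{cor1}. Given a right quasi-representable $\Ainf$-bimodule $N \in \Cinf(D,E)^{rqr}$, the localization equivalence produces a dg-bimodule $M$ on $U(D)$ and $E$ with $\phi(M)$ weakly equivalent to $N$; since Lemma \ref{le1} identifies the image of a representable with a representable and the relevant functors reflect weak equivalences, the condition that $N$ be right quasi-representable forces $M$ to be right quasi-representable as well. Replacing $M$ by a fibrant and cofibrant model inside the $Int$-construction does not change its isomorphism class in $H^0$, so this yields a preimage of the class of $N$, giving essential surjectivity. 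Naturality in $D$ and $E$ follows from the functoriality of each of the constructions entering $\phi(D,E)$.

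The step I expect to be the main obstacle is the passage from an equivalence of localizations to genuine quasi-full faithfulness, and in particular the explicit verification that $R_\tau(D,E)$, given on morphism complexes by $r \mapsto r \otimes Id$, is a quasi-isomorphism. This is an instance of bar-cobar (Koszul) duality and is precisely where the acyclicity of the twisting cochain $\tau$ is genuinely used; making the comparison rigorous, for instance by filtering by tensor length and analysing the associated spectral sequence, is the delicate point of the argument.
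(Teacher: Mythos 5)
Your overall strategy is the same as the paper's: split $\phi(D,E)$ into its two factors, handle $(Id_{U(D)}\otimes\gamma_E)^{\ast}$ by invariance under the quasi-equivalence $\gamma_E$, deduce an equivalence of homotopy categories from the localization equivalences of diagram (\ref{d1}), upgrade this to quasi-isomorphisms on morphism complexes via the shift trick, and obtain essential surjectivity from the quasi-representability bookkeeping of Lemma \ref{le1} and Corollary \ref{cor1}. Your observation that the rqr subcategories are not closed under shifts, so that quasi-full faithfulness must be proved in a larger shift-closed ambient category, is exactly the point of the paper's argument involving the pairs $(M_0,M_1[n])$.

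However, the ambient category you chose is too large, and the intermediate claim you reduce to is false as stated. Quasi-full faithfulness of the unrestricted dg-functor $\phi(D,E):Mod^{\bullet}(U(D),E)\to \Cinf(D,E)$ fails for non-cofibrant bimodules: on the source, $H^0(Hom^{\bullet}_{Mod^{\bullet}}(M_0,M_1))$ is the group of chain-homotopy classes of maps, whereas on the target $H^0(\Cinf(D,E))\simeq Mod_{\infty}(D,E)[W^{-1}]$ always computes morphisms in the localization, because the bar construction has a resolution built in; for a general non-cofibrant $M_0$ these disagree, so the map on morphism complexes cannot be a quasi-isomorphism. The same defect undermines your proposed explicit route: $Hom^{\bullet}_{CoMod^{\bullet}}(R_{\tau}(M_0),R_{\tau}(M_1))$ is canonically the Hom complex out of the bar resolution of $M_0$, i.e.\ a model of derived Hom, so $r\mapsto r\otimes Id$ is a quasi-isomorphism only when $M_0$ is homotopically cofibrant, and no filtration or spectral sequence argument can remove that hypothesis. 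Note that your own justification (``whose $H^0$ are the corresponding derived categories'') is valid precisely on the fibrant-cofibrant objects, not on the whole module category. The repair is immediate and is what the paper implicitly does: take as shift-closed ambient category $Int(Mod^{\bullet}(U(D),U(E)))$, all fibrant and cofibrant bimodules with no rqr condition imposed --- shifts preserve fibrancy and cofibrancy, though not quasi-representability --- prove the $H^0$ statement there using the localization equivalence $Ho(J(D,E))$, apply it to the pairs $(M_0,M_1[n])$ to get bijectivity on all $H^n$, and only then restrict to the full dg-subcategory $Int((Mod^{\bullet}(U(D),U(E)))^{rqr})$. With this correction your argument coincides with the paper's proof.
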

\medskip
\begin{proof}
First notice that the dg-functor 
\medskip
\[
Mod^{\bullet}(U(D),E)\xrightarrow{(Id_{U(D)}\otimes \gamma_E)^{\ast}} Mod^{\bullet}(U(D),U(E))
\]
restricts to a dg-equivalence 
\medskip
\[
Int((Mod^{\bullet}(U(D),E)))^{rqr})\xrightarrow{(Id_{U(D)}\otimes \gamma_E)^{\ast}} Int((Mod^{\bullet}(U(D),U(E))))^{rqr})
\]
because it is given by the dg-equivalence
\medskip
\[
\mathbb{R}Hom(D,E)\xrightarrow{(Id_{U(D)}\otimes \gamma_E)^{\ast}} \mathbb{R}Hom(D,U(E))
\]
Moreover $J(D,E)$ induces an equivalence of categories on $H^0$ because we have a commutative diagram (see appendix \ref{APPB})
\medskip
\[
 \begin{tikzpicture}
    \def\x{1.5}
    \def\y{-1.2}
    \node (A2_2) at (4*\x, 2.5*\y) {$Ho((Mod_{\infty}(D,E))^{rqr})$};
    \node (A2_1) at (4*\x, 1*\y) {$H^0((\Cinf(D,E))^{rqr})$};
    \node (A1_2) at (0*\x, 2.5*\y) {$Ho((Mod(U(D),U(E)))^{rqr})$};
    \node(A1_1) at (0*\x, 1*\y) {$H^0(Int((Mod^{\bullet}(U(D),U(E))))^{rqr}))$};
   
   \path (A1_1) edge [->] node [auto,swap] {$\scriptstyle{H^0(J(D,E))}$} (A2_1);
   \path (A1_2) edge [->] node [auto,swap] {$\scriptstyle{\simeq}$} (A1_1);
   \path (A1_2) edge [->] node [auto,swap] {$\scriptstyle{Ho(J(D,E))}$} (A2_2);
    \path (A2_2) edge [->] node [auto,swap] {$\scriptstyle{\simeq}$} (A2_1);
          \end{tikzpicture}
\]
and $Ho(J(D,E))$ is an equivalence of categories. To complete the proof, we need to show that, for every dg-bimodules $M_0,M_1$, the morphism of complexes 
\medskip
\[
J(D,E): Hom_{Mod^{\bullet}(U(D),U(E)))}^{\bullet}(M_0,M_1)\to Hom_{\Cinf(D,E)}^{\bullet}(J(M_0),J(M_1))
\]
induces an isomorphism in cohomology. This is true on $H^0$ because $H^0(J(D,E))$ is an equivalence of categories. Moreover, for a dg-bimodule $M$, there is an obvious structure of dg-bimodule on its shift $M[n]$ and a canonical quasi-isomorphims of complexes
\medskip
\[
Hom_{Mod^{\bullet}(U(D),U(E)))}^{\bullet}(M_0,M_1)[n]\xrightarrow{\sim} Hom_{Mod^{\bullet}(U(D),U(E)))}^{\bullet}(M_0,M_1[n])
\]
The same can be done for $\Ainf$-bimodules, giving a canonical quasi-isomorphism
\medskip
\[
Hom_{\Cinf(D,E))}^{\bullet}(N_0,N_1)[n]\xrightarrow{\sim} Hom_{\Cinf(D,E))}^{\bullet}(N_0,N_1[n])
\]
Those quasi isomorphisms are compatible with $J_{(D,E)}$ in the sense that the diagram
\medskip
\[
\begin{tikzpicture}
    \def\x{1.5}
    \def\y{-1.2}
    \node (A2_2) at (4*\x, 2.5*\y) {$Hom_{\Cinf(D,E))}^{\bullet}(N_0,N_1[n])$};
    \node (A2_1) at (4*\x, 1*\y) {$Hom_{Mod^{\bullet}(U(D),U(E)))}^{\bullet}(M_0,M_1[n])$};
    \node (A1_2) at (0*\x, 2.5*\y) {$Hom_{\Cinf(D,E))}^{\bullet}(N_0,N_1)[n]$};
    \node(A1_1) at (0*\x, 1*\y) {$Hom_{Mod^{\bullet}(U(D),U(E)))}^{\bullet}(M_0,M_1)[n]$};
   
   \path (A1_1) edge [->] node [auto,swap] {$\scriptstyle{\sim}$} (A2_1);
   \path (A1_1) edge [->] node [auto,swap] {$\scriptstyle{J_{(D,E)}[n]}$} (A1_2);
   \path (A1_2) edge [->] node [auto,swap] {$\scriptstyle{\sim}$} (A2_2);
    \path (A2_1) edge [->] node [auto,swap] {$\scriptstyle{J_{(D,E)}}$} (A2_2);
          \end{tikzpicture}
\]
is commutative. Taking $H^0$, we get a commutative diagram 
\medskip
\[
\begin{tikzpicture}
    \def\x{1.5}
    \def\y{-1.2}
    \node (A2_2) at (5*\x, 2.5*\y) {$H^0(Hom_{\Cinf(D,E))}^{\bullet}(N_0,N_1[n]))$};
    \node (A2_1) at (5*\x, 1*\y) {$H^0(Hom_{Mod^{\bullet}(U(D),U(E)))}^{\bullet}(M_0,M_1[n]))$};
    \node (A1_2) at (0*\x, 2.5*\y) {$H^n(Hom_{\Cinf(D,E))}^{\bullet}(N_0,N_1))$};
    \node(A1_1) at (0*\x, 1*\y) {$H^n(Hom_{Mod^{\bullet}(U(D),U(E)))}^{\bullet}(M_0,M_1))$};
   
   \path (A1_1) edge [->] node [auto,swap] {$\scriptstyle{\sim}$} (A2_1);
   \path (A1_1) edge [->] node [auto,swap] {$\scriptstyle{H^n(J_{(D,E)})}$} (A1_2);
   \path (A1_2) edge [->] node [auto,swap] {$\scriptstyle{\sim}$} (A2_2);
    \path (A2_1) edge [->] node [auto,swap] {$\scriptstyle{H^0(J_{(D,E)})}$} (A2_2);
          \end{tikzpicture}
\]
because $H^0(J_{(D,E)})$ is bijective, so it is $H^n(J_{(D,E)})$.

\end{proof}

\subsection{End of the proof of Theorem \ref{th2}}
By the result of T\"{o}en \cite{To} we have natural equivalences of dg-categories 
\medskip
\[
\mathbb{R}Hom(D,E)\xrightarrow{\sim} Int((Mod^{\bullet}(U(D),E)))^{rqr})
\]
Propositions \ref{pr1} and \ref{pr2} provide natural equivalences of dg-categories
\medskip
\[
\begin{gathered}
\phi(D,E): Int((Mod^{\bullet}(U(D),E)))^{rqr})\xrightarrow{\sim} (\Cinf(D,E))^{rqr} \\
z: \Ainf(D,E)\xrightarrow{\sim} \Cinf(D,E)^{rqr}\\
 \end{gathered} 
 \]
The composition of those in $Ho(dgCat)$ gives the required natural isomorphism.

\newpage

\section{The $(\infty,2)$-categories of dg-categories and of $\Ainf$-categories.}
In this section we recall the notions of $(\infty,1)$-category and $(\infty,2)$-category and the $\Ainf$-nerve functor
\medskip
\[
N_{\Ainf}:\Ainf Cat\to SSet
\]
defined in \cite{Fao}, whose values provide examples of $(\infty,1)$-categories. We then define two $(\infty,2)$-categories: the first, $\Ainf Cat_{(\infty,2)}$, has objects the set of $\Ainf$-categories and $(\infty,1)$-category of morphisms given by
\medskip
\[
\Ainf Cat_{(\infty,2)}(A,B)=N_{\Ainf}(\mathcal{A}_{\infty}(A,B))
\]
In this case, we prove the existence of a strictly associative and unital composition law
\medskip
\[
N_{\mathcal{A}_{\infty}}(\Ainf (A,B)) \times N_{\mathcal{A}_{\infty}}(\Ainf (B,C))\to N_{\mathcal{A}_{\infty}}(\Ainf (A,C))
\]
defined using the enrichment in dg-cocategories of the category of $\Ainf$-categories described in \cite{Ly}.
The second $(\infty,2)$-category, $dgCat_{(\infty,2)}$, is obtained from $\Ainf Cat_{(\infty,2)}$ by restricting it to dg-categories. In this case, the $(\infty,1)$-category of morphisms is given by
\medskip
\[
dgCat_{(\infty,2)}(C,D)=N_{\Ainf}(\mathcal{A}_{\infty}(C,D))=N_{dg}(\mathcal{A}_{\infty}(C,D))
\]
where $N_{dg}$ is the dg-nerve of Lurie \cite{LHA}.

\subsection{Simplicial categories and simplicial sets as models for $(\infty,1)$-categories}
We recall two models for $(\infty,1)$-categories and remark the main features and advantages of working with one or the other model. The first model for $(\infty,1)$-categories are weak Kan complexes or quasi-categories.
\medskip
\begin{defi}
A weak-Kan complex is a simplicial set $X$ such that, for any 0$<$ i $<$n and map of simplicial sets $f:\Lambda^{n}_{i} \to X_{\bullet}$, there exists an extension to the full n-simplex  $g:\Delta^{n} \to X_{\bullet}$
\medskip
\[
  \begin{tikzpicture}
    \def\x{1.5}
    \def\y{-1.2}
    \node (A1_1) at (1*\x, 1*\y) {$\Lambda_{i}^{n}$};
    \node (A2_1) at (2*\x, 1*\y) {$X_{\bullet}$};
    \node (A1_2) at (1*\x, 2*\y) {$\Delta^{n}$};
   \path (A1_1) edge [right hook->] node [auto] {$\scriptstyle{i}$} (A1_2);
    \path (A1_1) edge [->] node [auto,swap] {$\scriptstyle{f}$} (A2_1);
    \path (A1_2) edge [->, dashed] node [auto,swap] {$\scriptstyle{g}$} (A2_1);
      \end{tikzpicture}
  \]
where $\Lambda^{n}_{i}$ is the $i$-th inner-horn in $\Delta^{n}$. This property is called left lifting property for inner-horns.
\end{defi}  
Weak-Kan complex are a model for $(\infty,1)$-categories in the following sense:
\medskip
\begin{proposition}
There exists a model category structure $(SSet, Joy)$ on the category of simplicial sets, called the Joyal model structure, for which the fibrant objects are weak Kan complexes. 
\end{proposition}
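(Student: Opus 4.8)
The plan is to realize $(SSet, Joy)$ as a combinatorial model structure by means of Jeff Smith's recognition theorem, and then to identify its fibrant objects with the weak Kan complexes. First I would take the cofibrations to be the monomorphisms. Since $SSet$ is the category of presheaves on $\Delta$, it is locally presentable, and the monomorphisms are exactly the saturation of the set of boundary inclusions $I=\{\partial\Delta^n\hookrightarrow\Delta^n : n\ge 0\}$; the maps with the right lifting property against $I$ are precisely the trivial Kan fibrations.

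Next I would fix the class $W$ of weak equivalences. Let $J$ be the nerve of the contractible groupoid on two objects, which serves as an interval, and write $\mathrm{Fun}(X,Z)=Z^X$ for the internal hom. I declare $f\colon X\to Y$ to lie in $W$ if $f^{*}\colon \mathrm{Fun}(Y,Z)\to\mathrm{Fun}(X,Z)$ is an equivalence of quasi-categories for every weak Kan complex $Z$; equivalently, and this is the form convenient for accessibility, $f\in W$ iff the rigidification $\mathfrak{C}[f]$ is an equivalence of simplicial categories, where $\mathfrak{C}$ is left adjoint to the homotopy-coherent nerve. The $2$-out-of-$3$ property and closure under retracts are immediate. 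To apply Smith's theorem I must further check that $W$ is an accessible subcategory of the arrow category, that the $I$-injectives (the trivial Kan fibrations) lie in $W$, and that the trivial cofibrations (monomorphisms lying in $W$) are closed under pushout and transfinite composition. Accessibility is clearest from the $\mathfrak{C}$-description, since $\mathfrak{C}$ is a left adjoint between presentable categories and the Dwyer--Kan equivalences form an accessible class, so their preimage is accessible; the containment of trivial Kan fibrations in $W$ is routine, and the closure of the trivial cofibrations is the one genuinely technical verification at this stage.

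Granting these hypotheses, Smith's theorem produces a cofibrantly generated model structure on $SSet$ with cofibrations the monomorphisms and weak equivalences $W$; this is $(SSet, Joy)$. It remains to identify the fibrant objects, and here one inclusion is formal: the inner horn inclusions $\Lambda^n_i\hookrightarrow\Delta^n$ with $0<i<n$ are trivial cofibrations, so a fibrant object has the right lifting property against them and is therefore a weak Kan complex. The reverse implication---that every weak Kan complex is fibrant---is the crux and the main obstacle. It amounts to showing that a quasi-category is injective against every trivial cofibration, equivalently that the trivial cofibrations are generated as a saturated class by the inner horn inclusions together with the endpoint inclusion $\{0\}\hookrightarrow J$. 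This is exactly Joyal's lifting theorem, whose proof rests on the combinatorics of inner anodyne maps and on the description, detected by $J$, of the invertible $1$-morphisms of a quasi-category. Rather than reproduce this deep argument I would cite it (Joyal; or Lurie, \emph{Higher Topos Theory} 2.2.5); granting it, the fibrant objects of $(SSet, Joy)$ are precisely the weak Kan complexes, as claimed.
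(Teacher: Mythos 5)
The paper itself offers no proof of this proposition: it is recalled as a known theorem of Joyal (the paper's references [Jo] and [LHT]), so the only meaningful comparison is with the standard literature proof, and your outline does follow that standard route (Smith's recognition theorem applied to the monomorphisms and the categorical equivalences, then identification of the fibrant objects; this is Lurie, \emph{Higher Topos Theory} 2.2.5 together with 2.4.6). Two of your intermediate assertions are mildly understated rather than wrong: the equivalence of your two definitions of $W$ (via $\mathrm{Fun}(-,Z)$ and via $\mathfrak{C}$) is itself a theorem (HTT 2.2.5.8), and the claim that inner horn inclusions lie in $W$, while true, is not formal --- it is essentially the statement that inner anodyne maps are categorical equivalences. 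Both are harmless since you could cite them.

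There is, however, one genuine error in your final reduction. You assert that ``every weak Kan complex is fibrant'' is \emph{equivalent} to the statement that the trivial cofibrations are the saturation of $\{\Lambda^n_i\hookrightarrow\Delta^n\}_{0<i<n}\cup\{\{0\}\hookrightarrow J\}$. Only one implication holds: if the trivial cofibrations were generated by that set, then every quasi-category, having the right lifting property against each generator and hence against the saturated class, would be fibrant. The converse is false, and the generation statement is not something you can cite: it would identify the Joyal fibrations with the maps having the right lifting property against inner horns and $\{0\}\to J$, and this identification is a theorem only for maps \emph{between quasi-categories} (HTT 2.4.6.5); for general codomains it is not available, and indeed it is a well-known peculiarity of $(SSet,Joy)$ that, in contrast with the Kan--Quillen structure, no explicit set of generating trivial cofibrations is known. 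The retract argument that works for Kan--Quillen (factor a trivial cofibration as a cellular map followed by a ``naive'' fibration, then use two-out-of-three) breaks here precisely because a naive fibration that is a categorical equivalence is not known to have the right lifting property against all monomorphisms. So your reduction replaces the true statement you need by a strictly stronger one that is false or at best unavailable.

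The repair is immediate, and it is what Joyal and Lurie actually prove: the fibrancy of quasi-categories is established directly --- every quasi-category has the extension property with respect to every trivial cofibration --- via the special outer-horn lifting theorem and the theory of equivalences in a quasi-category (HTT Theorem 2.4.6.1), not via an explicit cell structure on the trivial cofibrations. Cite that statement in place of your ``equivalently'' clause and your outline becomes the standard proof of the proposition, which is exactly the result the paper takes on faith.
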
 
\medskip
If $X_{\bullet}$ is an 	$(\infty,1)$-category its $0$-simplicies should be thought as the objects of the $(\infty,1)$-category and the $k$-simplicies as $k$-morphisms. The inner-horn filling property induces a weak composition law, which is associative up to higher degree simplicies and with respect to all $k$-morphisms are invertible for $k>1$ \cite{LHT}. Nevertheless, one would like to work in a model for $(\infty,1)$-categories in which the composition law is strict. This is provided by the second model that we now recall. A simplicial category is a category enriched over the symmetric monoidal category of simplicial sets with monoidal structure given by the cartesian product. $(\infty,1)$-categories can be defined as simplicial categories for which the simplicial set of morphisms between two object is a fibrant Kan complex \cite{LHT}. Such condition encodes the invertibility of $k$-morphisms, for $k\ge1$. More precisely we have:
\medskip
\begin{proposition}
There is a model category structure $(SCat,Berg)$ on the category of simplicial categories, called the Bergner model structure, whose fibrant objects are simplicial categories for which the simplicial set of morphisms between two objects is a Kan complex. 
\end{proposition}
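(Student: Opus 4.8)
The plan is to exhibit $(SCat, Berg)$ as a cofibrantly generated model structure and to deduce the description of its fibrant objects as a formal consequence. First I would fix the three distinguished classes. The weak equivalences are the Dwyer--Kan equivalences: a simplicial functor $F\colon \mathcal C\to\mathcal D$ is one if each induced map $Map_{\mathcal C}(x,y)\to Map_{\mathcal D}(Fx,Fy)$ is a weak homotopy equivalence of simplicial sets and the induced functor $\pi_0 F\colon \pi_0\mathcal C\to\pi_0\mathcal D$ on homotopy categories is essentially surjective. The fibrations are the functors that are local Kan fibrations, meaning each $Map_{\mathcal C}(x,y)\to Map_{\mathcal D}(Fx,Fy)$ is a Kan fibration, and whose underlying functor on homotopy categories has the isomorphism lifting property. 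Cofibrations are then forced by the left lifting property against the trivial fibrations.

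Next I would write down the two generating sets, built by transporting the generators for the Kan--Quillen structure on $SSet$ into $SCat$. For a map of simplicial sets $K\to L$ let $U(K\to L)$ denote the functor, identity on the two objects $a,b$, given by $K\to L$ on $Map(a,b)$ and trivial elsewhere. I would take the generating cofibrations $I$ to consist of the functors $U(\partial\Delta^n\to\Delta^n)$ for $n\ge 0$ together with the inclusion $\emptyset\to *$ that adjoins a single object, and the generating acyclic cofibrations $J$ to consist of the functors $U(\Lambda^n_k\to\Delta^n)$ together with a map $\{x\}\to\mathcal H$ that freely adjoins an object equivalent to a given one, where $\mathcal H$ is the walking isomorphism regarded as a simplicial category with discrete mapping spaces. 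With these in hand, I would invoke the recognition theorem for cofibrantly generated model categories \cite{Hov}: since $SCat$ is locally presentable the small object argument applies to $I$ and $J$, and the class of Dwyer--Kan equivalences manifestly satisfies two-out-of-three and is closed under retracts.

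The substantive work is then the verification of the remaining recognition hypotheses, namely that $J\text{-}\mathrm{inj}$ coincides with the fibrations, that $I\text{-}\mathrm{inj}$ coincides with the maps that are simultaneously fibrations and weak equivalences, and that every transfinite composite of pushouts of maps in $J$ is a weak equivalence. I expect the last of these to be the main obstacle. The difficulty is that colimits in $SCat$ do not compute mapping spaces pointwise: pushing out the generator $\{x\}\to\mathcal H$ along a functor $\{x\}\to\mathcal C$ adjoins a new object together with a formal equivalence, and the mapping spaces of the resulting category are obtained from those of $\mathcal C$ by freely inserting the adjoined morphisms and all their composites. To show such a pushout is a Dwyer--Kan equivalence I would analyse these mapping spaces by the explicit filtration coming from word length in the adjoined generators, checking that at each stage the inclusion is a weak homotopy equivalence; this is the technical heart of the argument and is where the homotopical and the local conditions interact.

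Finally, granting the model structure, the identification of the fibrant objects is immediate: $\mathcal C$ is fibrant precisely when $\mathcal C\to *$ is a fibration, and since the terminal simplicial category has a one-point mapping space, the local Kan fibration condition says exactly that each $Map_{\mathcal C}(x,y)$ is a Kan complex, while the isomorphism-lifting condition over the terminal category is vacuous. Hence the fibrant objects are precisely the simplicial categories whose mapping spaces are Kan complexes, as asserted.
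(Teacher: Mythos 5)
The paper itself offers no proof of this proposition: it is quoted directly from Bergner's work (the reference \cite{Be} in the bibliography), so the only question is whether your sketch would go through. It would not, and the failure is precisely at the most delicate point of Bergner's theorem: your choice of the second generating trivial cofibration. You take $\{x\}\to\mathcal H$ where $\mathcal H$ is the walking isomorphism with \emph{discrete} mapping spaces. This object cannot serve. First, $\{x\}\to\mathcal H$ is not even an $I$-cofibration: relative $I$-cell complexes are levelwise free categories, and in a free category an identity $\alpha\beta=\mathrm{id}$ forces $\alpha=\beta=\mathrm{id}$ by word-length, so no levelwise-free (hence no cofibrant) simplicial category contains a strict isomorphism between distinct objects; consequently $\{x\}\to\mathcal H$ has no lift against the $I$-injective map $\mathcal E\to\mathcal H$ from a levelwise-free resolution $\mathcal E$, and the recognition-theorem hypothesis $J\text{-cell}\subseteq W\cap I\text{-cof}$ already fails. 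Second, and more fundamentally, the right lifting property against $\{x\}\to\mathcal H$ detects lifting of \emph{strict} isomorphisms, which is not a homotopy-invariant condition, whereas fibrations must lift \emph{homotopy} equivalences. Concretely, let $\mathcal D$ be a two-object simplicial category that is Dwyer--Kan contractible, has Kan mapping spaces, but has no strict isomorphism between its two objects (such exist: take a levelwise-free resolution of the walking isomorphism and apply $\mathrm{Ex}^\infty$ to its mapping spaces, which creates no new vertices and hence no new strict isomorphisms), and let $\mathcal C\subset\mathcal D$ be the full subcategory on one object. Then $\mathcal C\to\mathcal D$ lies in $J\text{-inj}\cap W$ for your $J$, yet it fails the right lifting property against the generating cofibration $\emptyset\to\ast$, since it is not surjective on objects. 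So with your sets one has $J\text{-inj}\cap W\not\subseteq I\text{-inj}$, and the lifting and factorization axioms collapse.

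The repair is exactly what makes Bergner's proof nontrivial: the second family of generating trivial cofibrations must be inclusions $\{x\}\to\mathcal H$ where $\mathcal H$ runs over a set of two-object simplicial categories that are \emph{cofibrant}, Dwyer--Kan equivalent to the point, and suitably small (countably many simplices); such $\mathcal H$ are ``coherent isomorphism intervals'' with highly non-discrete, freely generated mapping spaces. Right lifting against these does detect lifting of homotopy equivalences, which matches the fibration class (local Kan fibrations with the homotopy-equivalence lifting condition --- your phrasing via isomorphism lifting on homotopy categories is equivalent to it in the presence of the local Kan fibration condition), and it is for these generators that the pushout analysis you correctly identify as the technical heart can be carried out. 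Your last paragraph identifying the fibrant objects is correct in spirit once $\mathcal H$ is corrected, since a lift over the terminal category is obtained by collapsing $\mathcal H$ to a point; but as written the argument breaks at the construction of $J$.
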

\medskip
Another advantage to work with simplicial categories is that equivalences are easier to describe. Namely, an equivalence of simplicial categories in the Bergner model structure is a functor of simplicial categories $f:\mathcal{C}\to \mathcal{D}$ which induces an equivalence of categories in the associated $0$-homotopy categories and weak-homotopy equivalences on the simplicial sets of morphisms. Those two models for $(\infty,1)$-categories are equivalent in the sense that there exists a pair of adjoint functors  
\[
\adj{ \mathcal{C}[-]}{(SSet,Joy)}{(SCat, Berg)}{N_{SCat}}
\]
which is a Quillen equivalence of model categories \cite{LHT}. The functor $N_{SCat}$ is generally called the homotopy coherent nerve.

\subsection{$(\infty,2)$-categories as preSegal categories in (Sset,Joy)}
In this section we recall a model for $(\infty,2)$-categories due to Lurie \cite{LGo} which is based on the notion of $A$-enriched preSegal category in a model category $(A,M)$. For the purposes of this this paper, we are interested in the case $(A,M)=(SSet,Joy)$ is the model category of simplicial sets with the Joyal model structure. Recall that given a set $S$, $\Delta_{S}$ is the category whose objects are pairs $([n],c)$, where $[n]\in Ob(\Delta)$ and $c:[n]\to S$ is a map of sets, and a morphism $([n],c)\to ([n'],c')$ is a morphism $f:[n]\to [n']$ such that $c=c'\circ f$.
\medskip
\begin{defi}
Let $(A,M)$ be a model category. An $A$-enriched preSegal category is a pair $(S,X)$, where $S$ is a set and $X$ is a functor 
\medskip
\[
X:\Delta^{op}_{S}\to A
\]
such that, for every object $s$, $X[s]$ is the final object in $A$.  
We denote by $Seg_A$ the category whose objects are $A$-enriched preSegal categories and obvious morphism between them.
\end{defi}
\medskip		
A preSegal category gives then a set $S$, that we should think of as the set of objects of $(S,X)$, and, for every collection $s_0,\cdots,s_n$, an element $X[s_0,\cdots ,s_n]$ of $A$ together with maps induced by morphisms in $\Delta_{S}$. The relation with $A$-enriched category theory is better understood via the refined notion of a Segal category.  
\medskip		
\begin{defi}
An $A$-enriched preSegal category $(S,X)$ is a Segal category if, for every sequence of objects $s_0,\cdots,s_n\in S$, the canonical map
\medskip
\[
X[s_0,\cdots,s_n]\to X[s_0,s_1]\times \cdots \times X[s_{n-1},s_n]
\]
exhibits $X[s_0,\cdots,s_n]$ as the homotopy product in $A$ of $\{X[s_{i-1},s_i]\}_{i=1 \cdots n}$.
\end{defi}
\medskip
Every $A$-enriched Segal category $(S,X)$ defines an $Ho(A)$-enriched category $Ho(S,X)$, called the homotopy category of $(S,X)$, whose set of objects is $S$ and morphisms given by
\medskip
\[
Hom_{h(S,X)}(s_0,s_1)=X[s_0,s_1]\in Ho(A)
\]
Composition law is defined by composing the inverse in $Ho(A)$ of the morphism 
\medskip
\[
X[s_0,s_1,s_2]\to X[s_0,s_1]\times X[s_{1},s_2]
\] 
with the canonical map 
\medskip
\[
X[s_0,s_1,s_2]\to X[s_0,s_2]
\]
and unit induced by the degeneracy map 
\medskip
\[
X[s_0]\simeq \ast \to X[s_0,s_0]
\] 
We recall now the notion of a locally fibrant $A$-enriched preSegal category. 
\medskip
\begin{defi}
An $A$-enriched preSegal category $(S,X)$ is locally fibrant if, for every sequence of objects $s_0,\cdots,s_n\in S$, $X[s_0,\cdots,s_n]$ is a fibrant object in $A$.
\end{defi}  
\medskip		
We have the following theorem \cite{LGo}
\medskip		
\begin{thm}
Under suitable hypothesis on the model category $(A,M)$, there exists a model structure on $Seg_A$ of $A$-enriched preSegal categories, called the projective model structure, whose fibrant objects are locally fibrant $A$-enriched Segal categories. 
\end{thm}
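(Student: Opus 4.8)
The plan is to build the projective model structure by the standard three-stage procedure: first fix the object set, then impose the Segal condition by a left Bousfield localization, and finally glue the resulting structures over all possible object sets. Throughout I would take the ``suitable hypotheses'' on $(A,M)$ to be that $A$ is combinatorial and left proper, and that its Cartesian product is compatible with the model structure (so that the homotopy products appearing in the Segal condition are homotopically meaningful); these are exactly the properties the later steps consume.

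\textbf{Fixed object set.} First, fix a set $S$ and let $Seg_A^S\subset \mathrm{Fun}(\Delta_S^{op},A)$ be the full subcategory of those $X$ carrying each object $[s]$ to the terminal object of $A$. Since $A$ is combinatorial, the functor category carries the projective model structure with levelwise weak equivalences and fibrations, and the terminal-object constraint cuts out a model subcategory (one checks the constraint is stable under the relevant limits and factorizations, or restricts along the evident reflection onto pointed diagrams). The fibrant objects here are precisely the \emph{locally fibrant} preSegal categories with object set $S$, i.e. those $X$ for which every $X[s_0,\dots,s_n]$ is fibrant in $A$.

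\textbf{Localization at the Segal maps.} Second, I would left Bousfield localize $Seg_A^S$ at the set of spine inclusions $\mathrm{Sp}^n\hookrightarrow\Delta^n$ (decorated by all sequences in $S$), these being arranged so that a locally fibrant object is local exactly when the canonical Segal map $X[s_0,\dots,s_n]\to X[s_0,s_1]\times\cdots\times X[s_{n-1},s_n]$ is a weak equivalence. Because the projective structure is combinatorial and left proper, the localization exists by Smith's theorem. By the general theory of Bousfield localization its fibrant objects are the projectively fibrant objects that are local, that is, the locally fibrant $A$-enriched Segal categories with the fixed object set $S$.

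\textbf{Gluing over object sets.} Third, and this is where I expect the main obstacle, I would assemble the family $\{Seg_A^S\}_S$ into a single model structure on $Seg_A$. The forgetful functor $Seg_A\to \mathsf{Sets}$, $(S,X)\mapsto S$, is a Grothendieck fibration, and the global weak equivalences must be the ``Dwyer--Kan'' type: a morphism that is a local weak equivalence on each mapping object \emph{and} is essentially surjective on the homotopy categories $Ho(S,X)$. Taking cofibrations to be the maps with the left lifting property against the would-be trivial fibrations, the delicate part is verifying the factorization and lifting axioms for this fibered, already-localized structure, since essential surjectivity interacts nontrivially both with the Segal localization and with changes of object set (one typically produces factorizations by first rigidifying the object set and then applying the fixed-$S$ factorizations, which must be shown to glue). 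The payoff is that a global object turns out to be fibrant if and only if it is fibrant in each $Seg_A^S$, with no further completeness condition imposed, which yields exactly the asserted characterization of the fibrant objects as the locally fibrant $A$-enriched Segal categories.
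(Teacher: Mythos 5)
This theorem is not proved in the paper at all: it is recalled verbatim from Lurie \cite{LGo}, so there is no internal proof to compare your attempt against, and your proposal has to be judged against what such a proof actually requires. Your decomposition --- projective structure for a fixed object set, left Bousfield localization at the spine inclusions, then gluing over all object sets with Dwyer--Kan-type weak equivalences --- is the standard strategy (it is essentially the Hirschowitz--Simpson/Pellissier/Bergner/Simpson route, and also the shape of Lurie's argument), and your choice of hypotheses is in the right spirit. Steps 1 and 2 are essentially sound, modulo two repairable points: the subcategory $Seg_A^S\subset \mathrm{Fun}(\Delta_S^{op},A)$ is \emph{not} closed under colimits of the functor category (a coproduct of final objects is not final), so it is not literally a ``model subcategory'' --- one must pass through the reflection you mention only parenthetically and transfer the structure across it; and the spine inclusions must be tensored with a generating set of cofibrant objects of $A$, plus an appeal to the existence of a small set of homotopy generators in a combinatorial model category, before ``local object'' actually translates into ``the Segal map is a weak equivalence in $A$.''

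The genuine gap is step 3, which you name as the main obstacle and then do not carry out --- but it is where the entire content of the theorem lives. Concretely: (i) defining the global cofibrations as maps with the left lifting property against the ``would-be trivial fibrations'' is circular, since you have not defined trivial fibrations, and they cannot be read off from the fixed-$S$ structures alone because a global weak equivalence need not be injective or surjective on objects; a real proof must instead exhibit generating cofibrations and generating trivial cofibrations for $Seg_A$ (or verify the hypotheses of Smith's theorem for the class of Dwyer--Kan equivalences, including its accessibility and the stability of trivial cofibrations under pushout). (ii) The characterization of the fibrant objects --- that local fibrancy together with the Segal condition already gives the right lifting property against \emph{all} trivial cofibrations, including those that change the object set, such as a cofibrant model of the inclusion of a point into the two-object codiscrete Segal category (the enriched ``free-living equivalence'') --- is precisely the hard assertion of the theorem, the analogue of the statement that no completeness condition is needed; you present it as a ``payoff'' that ``turns out'' to hold rather than proving it. Until (i) and (ii) are supplied, what you have is an accurate plan of the proof in \cite{LGo} and the Segal-category literature, not a proof.
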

\medskip		
We remark that the hypothesis on the model category $(A,M)$ necessary for the theorem to hold are satisfied by the model categories $(SSet,Kan)$ and $(SSet,Joy)$.
\medskip		 
\begin{example}
In the case the model category $(A,M)$ is the category of simplicial sets with the Kan model structure $(SSet,Kan)$, one gets another model for the theory of $(\infty,1)$-categories. More precisely, there exists a Quillen equivalence of model categories
\medskip
\[
\adj{G}{(SCat,Berg)}{(Seg_{(SSet,Kan)}, Proj)}{F}
\]
In particular one can think of $(\infty,1)$-categories as fibrant objects of $(Seg_{(SSet,Kan)}, Proj)$.
\end{example}
\medskip
\begin{example}
In the case the model category $(A,M)$ is the category of simplicial sets with the Joyal model structure $(SSet,Joy)$, we get a model for the theory of $(\infty,2)$-categories. Namely, an $(\infty,2)$-category is a fibrant object of $(Seg_{(SSet,Joy)}, Proj)$, hence, one can think of an $(\infty,2)$-category as a set of objects $S$ and, for any pair of objects $s_0,s_1$, an $(\infty,1)$-category of morphisms $X[s_0,s_1]$, with composition law induced by the correspondence  
\medskip
\[
                 \begin{tikzpicture}
                 \def\x{2.3}
                 \def\y{-1.2}
                 \node (A0_1) at (0*\x, 1*\y) {$X[s_0,s_1]\times X[s_{1},s_2]$};
                 \node (A1_0) at (1*\x, 0*\y) {$X[s_0,s_1,s_2]$};
                 \node (A2_1) at (2*\x, 1*\y) {$X[s_0,s_2]$};
                 \path (A1_0) edge [->] node [auto] {$\scriptstyle{}$} (A0_1);
                 \path (A1_0) edge [->] node [auto,swap] {$\scriptstyle{}$} (A2_1);
                 \end{tikzpicture}
                \]		
In particular, the homotopy category of an $(\infty,2)$-category $Ho(S,X)$ is a (strict) category enriched over $Ho(SSet)_{Joy}$.
\end{example}
\medskip
\begin{remark}[$(\infty,1)$-category associated to an $(\infty,2)$-category]\label{rm5}
Let $(S,X)$ be an $(\infty,2)$-category. Given objects $s_0,s_1$ we have an $(\infty,1)$-category of morphisms $X[s_0,s_1]$ between two objects $s_0,s_1$. We would like to get rid of non-invertible morphisms in this $(\infty,1)$-category in order to get a topological space of morphisms. One way to do this is to consider the largest Kan complex contained in $X[s_0,s_1]$. More precisely, there exists a Quillen pair of adjoint functors \cite{Jo}
\medskip
\[
\adj{i}{(SSet,Kan)}{(SSet, Joy)}{(-)^0}
\]
where $i$ is the inclusion functor. The functor $(-)^0$ associates to a simplicial set $X$, the maximal Kan complex or $\infty$-groupoid, contained in $X$. By standard properties of adjunctions, $(-)^0$ preserves products, fibrant objects and weak-equivalences between fibrant objects and hence this construction allows to define a functor
\medskip
\[
(-)^{\circ}: Fib(Seg_{(SSet,Joy)})\to Seg_{(SSet,Kan)}
\]
by setting
\medskip
\[
(S,X)^{\circ}=(S,X^{\circ})
\]
where, for every sequence of objects $s_0,\cdots,s_n$, 
\medskip
\[
X^{\circ}[s_0,\cdots,s_n]=(X[s_0,\cdots,s_n])^{\circ}
\]
Here $Fib(Seg_{(SSet,Joy)})$ is the full subcategory of fibrant objects of $Seg_{(SSet,Joy)}$.
\end{remark}
\medskip
\begin{defi}
Given an $(\infty,2)$-category $(S,X)\in Fib(Seg_{(SSet,Joy)})$ its associated $(\infty,1)$-category is $(S,X)^{\circ}$.
\end{defi}

\subsection{Nerve construction}
Recall from \cite{Fao}, that there exists a functor, called the nerve construction for $\mathcal{A}_{\infty}$-categories
\medskip
\[
N_{\mathcal{A}_{\infty}}: \mathcal{A}_{\infty} Cat\to SSet
\]
By definition, let $dg[\Delta^n]$ be the dg-category with $Ob(dg[\Delta^n])=\{0,1,\dots ,n\}$ and cochain complex of morphisms
\[ 
Hom^{\bullet}_{dg[\Delta^n]}(i, j) = \left\{
  \begin{array}{l l}
    \mathbb{K}\cdot (i,j) & i\le j\\
    \emptyset & i>j
  \end{array} \right.
\]
where deg($(i, j)$)=$0$ and differential $m_1=0$. Composition is defined as
\[ 
m_2((ij),(jk))=(ik)
\]
for $i\le j\le k$ and $m_2=0$ otherwise.
The construction $[n]\to dg[\Delta^n]$ yields to a functor 
\begin{equation}\label{equ67}
dg[\Delta^{-}]: \Delta \to dgCat
\end{equation}
defining a cosimplicial dg-category. For an $\mathcal{A}_{\infty}$-category $A$, the simplicial set $N_{\mathcal{A}_{\infty}}(A)$ is described by the formula 
\medskip
\[
N_{\mathcal{A}_{\infty}}(A)_n=Hom_{\mathcal{A}_{\infty}Cat}(dg[\Delta^n],A)
\]
with simplicial structure dual to the cosimplicial structure of (\ref{equ67}). One can prove that nerve $N_{\mathcal{A}_{\infty}}(A)$ of any $\mathcal{A}_{\infty}$-category $A$ is an $(\infty,1)$-category \cite{Fao} and that this construction restricts to a functor
\medskip
\[
N_{dg}: dgCat\to SSet
\]
whose values equal the nerve construction for dg-categories of Lurie \cite{LHA}. Recall, that we have an adjunction (see appendix \ref{APPA})
\medskip
\[
\adj{U}{\Ainf Cat}{dgCat}{i}
\]
where $i$ is the canonical inclusion of $dgCat$ in $\Ainf Cat$. Using this adjunction, we find that the $n$-simplicies of the dg-nerve are given by
\medskip
\[
N_{dg}(D)_n=Hom_{\mathcal{A}_{\infty}Cat_{\mathbb{K}}}(dg[\Delta^n],D)=Hom_{dgCat}(U(dg[\Delta^n]),D)
\]
or equivalently
\medskip
\[
Hom_{dgCat}(U(dg[\Delta^n]),D)=Hom_{SSet}(\Delta^n,N_{dg}(D))
\]
this formula suggests that there must exists an extension of the construction $U(dg[-])$, so far defined only for the standard $n$-simplex $\Delta^n$, to a functor
\medskip
\[
U(dg[-]): SSet\to dgCat
\]
which is a left adjoint of the dg-nerve. There is a unique way to do this, namely 
\medskip
\begin{defi}
Define the functor 
\medskip
\[
U(dg[-]): SSet\to dgCat
\]
whose value on a simplicial set $K$ is given by
\medskip
\[
U(dg[K])=colim_{\Delta^n\to K} U(dg[\Delta^n])
\]
\end{defi}
\medskip
\begin{remark}
This functor is well defined because every simplicial set $K$ is isomorphic to the colimit over its $n$-simplicies and it allows to compare $(\infty,1)$-categories with dg-categories in the following sense.
\end{remark}
\medskip
\begin{proposition}\label{pr4}
The functor $U(dg[-])$ defines an adjunction 
\medskip
\[
\adj{U(dg[-])}{SSet}{dgCat}{N_{dg}}
\]
Moreover, this adjunction is a Quillen adjunction of model categories
\medskip
\[
\adj{U(dg[-])}{(SSet,Joy)}{(dgCat,Tab)}{N_{dg}}
\]
\end{proposition}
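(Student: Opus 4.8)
The plan is to deduce the adjunction from the general nerve--realization formalism and then to verify the two Quillen conditions, one directly on $U(dg[-])$ and the other on its right adjoint $N_{dg}$.

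First I would obtain the adjunction. Since $dgCat$ is cocomplete, the cosimplicial dg-category $[n]\mapsto U(dg[\Delta^n])$ has a pointwise left Kan extension along the Yoneda embedding $\Delta\to SSet$, and by construction this extension is exactly $U(dg[K])=colim_{\Delta^n\to K}U(dg[\Delta^n])$. Its right adjoint is then forced to be the associated nerve, and one checks this on Hom-sets: writing $K\cong colim_{\Delta^n\to K}\Delta^n$ and using that $U(dg[-])$ preserves colimits while $Hom_{dgCat}$ turns colimits in the first variable into limits, one gets
\begin{align*}
Hom_{dgCat}(U(dg[K]),D) &\cong \lim_{\Delta^n\to K}Hom_{dgCat}(U(dg[\Delta^n]),D) \\
&\cong \lim_{\Delta^n\to K}N_{dg}(D)_n \cong Hom_{SSet}(K,N_{dg}(D)),
\end{align*}
naturally in $K$ and $D$; the middle identification is the defining formula for $N_{dg}$. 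This proves the first assertion.

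For the Quillen statement it is equivalent to show that $U(dg[-])$ preserves cofibrations and that $N_{dg}$ preserves fibrations. Preservation of cofibrations I would check on generators: the cofibrations of $(SSet,Joy)$ are the monomorphisms, generated by the boundary inclusions $\partial\Delta^n\hookrightarrow\Delta^n$, and since $U(dg[-])$ preserves colimits it is enough that each $U(dg[\partial\Delta^n])\to U(dg[\Delta^n])$ be a cofibration in $(dgCat,Tab)$. This holds because, by the Remark above, every $U(dg[K])$ has free tensor-algebra complexes of morphisms, so a monomorphism $K\hookrightarrow L$ induces a relative free extension on morphism complexes, and such maps are cofibrations in the Tabuada structure.

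The heart of the argument is that $N_{dg}$ preserves fibrations. Let $p\colon D\to E$ be a Tabuada fibration; since $N_{dg}(D)$ and $N_{dg}(E)$ are quasi-categories \cite{Fao}, I would use the characterization \cite{Jo} that a map between quasi-categories is a fibration in $(SSet,Joy)$ if and only if it is an inner fibration that lifts equivalences. For the inner-fibration part one solves the lifting problem for $\Lambda^n_j\hookrightarrow\Delta^n$ with $0<j<n$ against $N_{dg}(p)$; transposing along the adjunction this becomes a lifting problem against the cofibration $U(dg[\Lambda^n_j])\to U(dg[\Delta^n])$, i.e.\ producing in $D$ the free generators adjoined in passing from $U(dg[\Lambda^n_j])$ to $U(dg[\Delta^n])$, with prescribed differentials and prescribed images under $p$. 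This is solvable precisely because $p$ is an epimorphism on morphism complexes, the first clause of Tabuada's definition of a fibration. For the equivalence-lifting part, an equivalence in $N_{dg}(D)$ is a degree-$0$ cycle that is invertible in $H^0(D)$, so lifting an equivalence of $N_{dg}(E)$ through $N_{dg}(p)$ is exactly the isomorphism-lifting clause of Tabuada's definition. Hence $N_{dg}(p)$ is a fibration in $(SSet,Joy)$, and the two conditions together give the Quillen adjunction.

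The step I expect to be the main obstacle is making the inner-horn filling precise and natural. Concretely one must translate a pair of maps $U(dg[\Lambda^n_j])\to D$ and $U(dg[\Delta^n])\to E$ into an honest cochain-level lifting problem, carefully tracking the cobar differential on the free morphism complexes of $U(dg[\Delta^n])$ (the differential of the top cell involves the missing face $d_j$) and the signs built into the dg-nerve; this is the point where the explicit free/cofibrant description of $U(dg[-])$ and the counit $\gamma$ of the Remark must be used. Equivalently, the technical content is that for inner $j$ the map $U(dg[\Lambda^n_j])\to U(dg[\Delta^n])$ is a trivial cofibration in $(dgCat,Tab)$, and one could instead prove this directly by showing it induces quasi-isomorphisms on morphism complexes and leaves $H^0$ unchanged.
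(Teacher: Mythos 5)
Your proposal is correct, but it establishes the Quillen property by a genuinely different route than the paper. The paper handles the adjunction exactly as you do (it is forced by the colimit definition), but for the Quillen statement it works only with the right adjoint: it compares $N_{dg}$ with the big dg-nerve --- the homotopy coherent nerve of the simplicial category obtained by applying Dold--Kan truncation to the Hom complexes --- and then uses that weak equivalences and fibrations of $(dgCat,Tab)$ pass to weak equivalences and fibrations in $(SCat,Berg)$, whose homotopy coherent nerve is right Quillen into $(SSet,Joy)$; since a right adjoint preserving fibrations and all weak equivalences preserves trivial fibrations, $N_{dg}$ is right Quillen. You instead verify the criterion of \cite{Hov} that the left adjoint preserve cofibrations and the right adjoint preserve fibrations, everything at the chain level: $U(dg[\partial\Delta^n])\to U(dg[\Delta^n])$ attaches exactly one free morphism generator (the top cell), whose cobar differential already lies in the source, so it is a pushout of a generating Tabuada cofibration; and fibrancy of $N_{dg}(p)$ is checked through Joyal's isofibration criterion \cite{Jo}, whose two halves match the two clauses of Tabuada's fibrations. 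The paper's route buys brevity at the price of invoking the dg-nerve/big-nerve comparison and the simplicial-category machinery; your route buys a self-contained argument (essentially the proof of the corresponding statement in \cite{LHA}) together with the sharper facts that boundary inclusions go to cellular extensions and inner horn inclusions to trivial cofibrations.

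Two details in your sketch need to be nailed down, neither fatal. First, in the inner-horn step you should lift the top cell arbitrarily (using surjectivity on Hom complexes) and then \emph{define} the missing $j$-th face by solving the differential equation of the top cell, checking afterwards that it lifts the prescribed face and satisfies its own equation; attempting to lift both generators with prescribed differentials at once would require acyclicity of the kernel of $p$, which you do not have. Second, Joyal's criterion asks for an edge of $N_{dg}(D)$ mapping to the given equivalence \emph{on the nose}, whereas Tabuada's axiom a priori produces a lift of its $H^0$-class; this is repaired by surjectivity again: if $p(u_0)=v+db$, lift $b$ to $\tilde b$ and replace $u_0$ by $u_0-d\tilde b$, which is closed, cohomologous to $u_0$, and maps to $v$ exactly.
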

\begin{proof}
The fact that $U(dg[-])$ defines an adjunction follows from its definition. Moreover, the dg-nerve preserves equivalences and fibrations of dg-categories. To see this fact, one can use the big dg-nerve and the notion of weak-equivalence and fibration in the model category $(SCat,Berg)$.
\end{proof}

\subsection{The $(\infty,2)$-categories $\Ainf Cat_{(\infty,2)}$ and  $dgCat_{(\infty,2)}$}
Recall that \cite{Ly} for $\Ainf$-categories $A,B,C$, there exists a morphism of counital dg-cocategories
\medskip
\[
M: B(\Ainf (A,B)_{nu})^+\otimes B(\Ainf (B,C)_{nu})^+\to  B(\Ainf (A,C)_{nu})^+
\]
which is associative and unital with respect to a morphism of counital dg-cocategries
\medskip
\[
\mathbbm{1}_M: B(\Ainf (A,A)_{nu})^+\to B(\Ainf (A,A)_{nu})^+
\]
Here the $\Ainf$ category $\Ainf (A,B)_{nu}$ is slightly larger than the $\Ainf$-category $\Ainf (A,B)$ in the sense that its objects are not necessarily unital $\Ainf$-functors. However, its restriction to unital $\Ainf$-functors coincides with the $\Ainf$-category $\Ainf (A,B)$ defined in proposition \ref{p4}. We have the following lemma
\medskip
\begin{lemma}
Given $\Ainf$-categories $A,B,C$ the morphisms of dg-cocategories $M$ and $\mathbbm{1}_M$ restrict to morphisms of dg-cocategories
\medskip
\[
\begin{gathered}
M: B(\Ainf (A,B))^+\otimes B(\Ainf (B,C))^+\to  B(\Ainf (A,C))^+ \\
\mathbbm{1}_M: B(\Ainf (A,A))^+\to B(\Ainf (A,A))^+
 \end{gathered} 
 \]
\end{lemma}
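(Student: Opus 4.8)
The plan is to prove the lemma by showing that the restriction to unital $\Ainf$-functors is compatible with the composition morphisms $M$ and $\mathbbm{1}_M$ already defined on the larger (not-necessarily-unital) $\Ainf$-categories. Concretely, $\Ainf(A,B)$ sits inside $\Ainf(A,B)_{nu}$ as the full $\Ainf$-subcategory spanned by the unital $\Ainf$-functors, so on the level of bar constructions $B(\Ainf(A,B))^+$ is a sub-dg-cocategory of $B(\Ainf(A,B)_{nu})^+$, cut out by the objects which are the unital functors. The entire content is therefore to check that the image under $M$ of tensors of chains of morphisms between unital functors again lands in the subcocategory generated by unital functors, and similarly that $\mathbbm{1}_M$ restricts. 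Since $M$ and $\mathbbm{1}_M$ are already morphisms of dg-cocategories, once the set-theoretic claim about objects is verified the restricted maps are automatically morphisms of dg-cocategories, being restrictions of such.

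First I would recall precisely how the objects of the bar construction $B(\Ainf(A,B)_{nu})^+$ and its cocomposition encode $\Ainf$-functors and their natural transformations, so that an object of the coalgebra is just an object (functor) of $\Ainf(A,B)_{nu}$ and the higher cogenerators are chains $f_0, f_1, \dots, f_k$ of functors together with transformation data. The morphism $M$ sends a pair of such chains, one in $\Ainf(A,B)_{nu}$ and one in $\Ainf(B,C)_{nu}$, to a chain in $\Ainf(A,C)_{nu}$ whose underlying objects are the composites $g_j \circ f_i$ of the respective functors. Thus the key step is the observation that the composition of two unital $\Ainf$-functors is again a unital $\Ainf$-functor. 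This is a standard but essential fact: if $f: A \to B$ and $g: B \to C$ are unital (i.e. strictly or cohomologically send units to units appropriately), then the $\Ainf$-composite $g \circ f$ preserves units as well, so $g \circ f \in \Ob(\Ainf(A,C))$.

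Given this, I would argue that $M$ carries the sub-dg-cocategory $B(\Ainf(A,B))^+\otimes B(\Ainf(B,C))^+$ into $B(\Ainf(A,C))^+$: every cogenerator of the source involves only unital functors as its vertices, hence its image involves only composites of unital functors, which by the previous step are unital, so the image lies in the subcocategory spanned by unital functors. For the unit morphism $\mathbbm{1}_M$, the relevant object is the identity $\Ainf$-functor $\id_A$, which is unital, so $\mathbbm{1}_M$ preserves the subcocategory $B(\Ainf(A,A))^+$ as well. The associativity and unitality of the restricted $M$ and $\mathbbm{1}_M$ are inherited from the associativity and unitality of the original morphisms, since they hold as identities of maps of dg-cocategories and restricting to a sub-object preserves such identities.

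The main obstacle I anticipate is purely bookkeeping rather than conceptual: one must be careful that ``unital $\Ainf$-functor'' is taken in a sense (strict unitality versus homotopy unitality, in the conventions of \cite{Ly}, \cite{LH}, \cite{Sei}) for which the class of unital functors is genuinely closed under $\Ainf$-composition and contains the identity, and for which the higher components of the natural transformations appearing in the cogenerators do not spoil membership in the unital subcocategory. Provided one fixes the unitality convention so that $\Ainf(A,B)$ is precisely the full $\Ainf$-subcategory of $\Ainf(A,B)_{nu}$ on unital objects, the verification that $M$ and $\mathbbm{1}_M$ preserve these subcocategories reduces to the closure of unitality under composition, and the rest follows formally.
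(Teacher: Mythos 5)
Your proof is correct and takes essentially the same approach as the paper: the paper's own argument is precisely the observation that $M$ is defined on objects by composition of $\Ainf$-functors (so that unital functors compose to unital functors) and that $\mathbbm{1}_M$ is the identity on objects, whence both maps restrict to the sub-dg-cocategories spanned by the unital functors, the restrictions being automatically morphisms of dg-cocategories. The only slight discrepancy is your description of $\mathbbm{1}_M$ as singling out the object $\id_A$; in the cited construction of Lyubashenko it is the identity on objects, but either way the object-level check you perform is exactly what makes the restriction automatic.
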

\begin{proof}
The proof follows from the fact that $M$ is defined on objects by the composition of $\Ainf$-functors and $\mathbbm{1}_M$ is the identity on objects \cite{Ly}. 
\end{proof}
\medskip
\begin{remark}\label{rm2}
Let $f\in N_{\mathcal{A}_{\infty}}(\Ainf (A,B))_n$, $g\in N_{\mathcal{A}_{\infty}}(\Ainf (B,C))_n$ be $n$-simplicies in the respective nerves. Consider the diagram
\medskip
\begin{equation}\label{dg1}
\begin{tikzpicture}
    \def\x{1.5}
    \def\y{-1.2}
    \node (A2_2) at (5*\x, 2.5*\y) {$B(\Ainf (A,C))^+$};
    \node (A2_1) at (5*\x, 1*\y) {$B(\Ainf (A,B))^+\otimes B(\Ainf (B,C))^+$};
    \node (A1_2) at (0*\x, 2.5*\y) {$B(dg[\Delta^n])^+$};
    \node(A1_1) at (0*\x, 1*\y) {$B(dg[\Delta^n])^+\otimes B(dg[\Delta^n])^+$};
   \path (A1_1) edge [->] node [auto,swap] {$\scriptstyle{B(f)^+\otimes  B(g)^+}$} (A2_1);
   \path (A1_2) edge [->] node [auto,swap] {$\scriptstyle{\Delta_{B(dg[\Delta^n])^+}}$} (A1_1);  
    \path (A2_1) edge [->] node [auto,swap] {$\scriptstyle{M}$} (A2_2);
          \end{tikzpicture}
\end{equation}
where $B(f)^+$ and $B(g)^+$ are the morphisms induced in the bar construction. The composition of those morphisms defines a morphism of counital dg-cocategories 
\medskip
\[
H: B(dg[\Delta^n])^+\to B(\Ainf (A,C))^+
\]
and let $h$ its associated $\Ainf$-functor 
\medskip
\[
h: dg[\Delta^n]\to \Ainf (A,C)
\]
Similarly, for $e\in N_{\mathcal{A}_{\infty}}(\Ainf (A,A))_n$ the composition
\medskip
\[
\begin{tikzpicture}
    \def\x{1.5}
    \def\y{-1.2}
        \node (A2_1) at (3*\x, 1*\y) {$B(\Ainf (A,A))^+$};
    \node (A1_2) at (0*\x, 2.5*\y) {$B(dg[\Delta^n])^+$};
    \node(A1_1) at (0*\x, 1*\y) {$B(\Ainf (A,A))^+$};
   \path (A1_1) edge [->] node [auto,swap] {$\scriptstyle{\mathbbm{1}_M}$} (A2_1);
   \path (A1_2) edge [->] node [auto,swap] {$\scriptstyle{B(e)^+}$} (A1_1);
           \end{tikzpicture}
\]
defines a morphism of counital dg-cocategories
\medskip
\[
K: B(dg[\Delta^n])^+\to B(\Ainf (A,A))^+
\]
and let $k$ its associated $\Ainf$-functor
\medskip
\[
k: dg[\Delta^n]\to \Ainf (A,A)
\]
\end{remark}
\medskip
\begin{lemma}
The $\Ainf$-functors $h$ and $k$ are unital.
\end{lemma}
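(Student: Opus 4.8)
The plan is to verify unitality directly from the definition of a unital $A_\infty$-functor, reducing it to two checks: that $h$ and $k$ send the objects of $dg[\Delta^n]$ to unital $A_\infty$-functors (genuine objects of $\Ainf(A,C)$, resp. $\Ainf(A,A)$, rather than of the larger $\Ainf(A,C)_{nu}$), and that they carry the units $(ii)$ of $dg[\Delta^n]$ to units, with the higher components vanishing on unit inputs in the appropriate $A_\infty$-sense. The whole argument rests on the preceding lemma, which guarantees that $M$ and $\mathbbm{1}_M$ restrict to morphisms between the bar cocategories of the \emph{unital} $A_\infty$-categories of functors.

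First I would settle the statement on objects. By construction $M$ acts on $0$-cells by composition of $A_\infty$-functors and $\mathbbm{1}_M$ acts as the identity on $0$-cells. The simplices $f$ and $g$ are morphisms in $\Ainf Cat$ out of $dg[\Delta^n]$ into the unital $A_\infty$-categories $\Ainf(A,B)$ and $\Ainf(B,C)$, so each object $i\in\{0,\dots,n\}$ is sent by $f$ and $g$ to unital $A_\infty$-functors $f(i)\colon A\to B$ and $g(i)\colon B\to C$. Since the composite of unital $A_\infty$-functors is unital, the object $h(i)=g(i)\circ f(i)$ is a unital $A_\infty$-functor, i.e. a genuine object of $\Ainf(A,C)$; likewise $k(i)=e(i)$ is unital. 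This is exactly the content that, on $0$-cells, the morphisms $B(f)^+\otimes B(g)^+$ (resp. $B(e)^+$) land in the unital bar cocategories, so that by the preceding lemma the composites $H$ and $K$ of diagram (\ref{dg1}) take values in $B(\Ainf(A,C))^+$ and $B(\Ainf(A,A))^+$.

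The second and essential step is to read off unitality of $h$ and $k$ from the fact that $H$ and $K$ factor through these unital bar cocategories. Here I would use the dictionary between $A_\infty$-functors and morphisms of coaugmented (counital) dg-cocategories: a morphism $B(dg[\Delta^n])^+\to B(D)^+$ compatible with the coaugmentations corresponds precisely to a unital $A_\infty$-functor $dg[\Delta^n]\to D$. The comultiplication $\Delta_{B(dg[\Delta^n])^+}$, the bar morphisms $B(f)^+\otimes B(g)^+$ (resp. $B(e)^+$), and the multiplication $M$ (resp. $\mathbbm{1}_M$) are all morphisms of counital dg-cocategories, so their composite $H$ (resp. $K$) preserves the coaugmentation. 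Translating back through the bar--cobar correspondence then yields that $h$ and $k$ preserve the units $(ii)$ of $dg[\Delta^n]$ and that their higher components vanish on unit inputs, which is what it means to be unital.

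I expect the main obstacle to be the precise coaugmentation bookkeeping in this last step: one must check that $M$ really respects the unit structure, i.e. that the image under $M$ of the distinguished element coming from the identity composition is again a unit, and that the comultiplication $\Delta$ threads the units of $dg[\Delta^n]$ through correctly. Because the units $(ii)$ of $dg[\Delta^n]$ are strict and $\mathbbm{1}_M$ is literally the identity on objects, the case of $k$ is essentially immediate once the dictionary is in place; the case of $h$ requires tracing the unit through the diagonal and the restricted composition $M$, relying on the unitality of $f(i)$ and $g(i)$ established in the first step.
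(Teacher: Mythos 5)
There is a genuine gap in your second step, and it sits exactly where you expect only ``bookkeeping'': that check is not bookkeeping but the entire content of the lemma. Your dictionary --- counital dg-cocategory morphisms $B(dg[\Delta^n])^+\to B(D)^+$ compatible with coaugmentations correspond to \emph{unital} $\Ainf$-functors --- is valid only for the \emph{reduced} bar construction $B^+(D)=(B(\overline{D}))^+$ of the appendix, where the unit of $D$ has been killed and the adjoined coaugmentation element takes its place. But Lyubashenko's composition $M$, out of which $H$ and $K$ are built, is defined on the \emph{unreduced} bar cocategories $B(\Ainf(A,B)_{nu})^+$: there the $\Ainf$-unit $Id_i\in Hom^{\bullet}_{dg[\Delta^n]}(i,i)$ sits inside the cocategory as a \emph{primitive} element of tensor length one, while the coaugmentation element is the \emph{grouplike} element adjoined formally; they are different elements. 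Counitality of $\Delta_{B(dg[\Delta^n])^+}$, of $B(f)^+\otimes B(g)^+$ and of $M$ does give (by conilpotence) that $H$ preserves grouplikes, i.e.\ the coaugmentation, and that $H(Id_i)$ is again primitive --- but it says nothing about \emph{which} primitive element $H(Id_i)$ is, nor that the higher tensor-length components of $H$ kill tensors containing $Id_i$. Unitality of $h$ is precisely the pair of statements $h_1(Id_i)=Id_{h(i)}$ and $h_n(\cdots\otimes Id_i\otimes\cdots)=0$ for $n>1$, and neither follows from the formal counitality argument.

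What is needed --- and what the paper's proof does --- is the explicit computation, quoted from \cite{Ly}, of how $M$ treats identity transformations: under the composite of diagram (\ref{dg1}), the element $Id_i$ is sent to $Id_{g(i)\circ f(i)}$, and any tensor $v_1\otimes\cdots\otimes v_n$ having at least one factor equal to $Id_i$ is sent to $0$. Equivalently, one could try to show that $M$ descends to the reduced bar constructions so that your dictionary applies, but that descent statement \emph{is} the identity-preservation and vanishing statement, so it cannot be obtained for free from counitality. Your first step (objects are sent to composites of unital $\Ainf$-functors, matching the preceding lemma) is correct, and your instinct that the unit must be traced through $\Delta$ and $M$ is the right one; but as written the proposal defers exactly that trace, so it does not prove the lemma.
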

\begin{proof}
Fix $i\in Ob(dg[\Delta^n])$. Then, under the diagram (\ref{dg1}), the identity at $i$, $Id_i\in B(dg[\Delta^n])^+(i,i)$, is mapped into $Id_{g(i)\circ f(i)}$ and a tensor product of morphism $v_1\otimes \cdots \otimes v_n$, in which at least one of those is $Id_i$, is mapped to $0$ \cite{Ly}. This proves that $h$ is unital. A similar computation shows that $k$ is unital too. 
\end{proof}
This lemma allows to give the following definition.
\medskip
\begin{defi}
Given $\Ainf$-categories $A,B,C$, define the maps of simplicial sets:
\medskip
\[
\mu_{(A,B,C)}: N_{\mathcal{A}_{\infty}}(\Ainf (A,B)) \times N_{\mathcal{A}_{\infty}}(\Ainf (B,C))\to N_{\mathcal{A}_{\infty}}(\Ainf (A,C))
\]
that on $n$-simplicies $f$ and $g$ is given by 
\medskip
\[
\mu_{(A,B,C)}(f,g)=h 
\]
where $h$ is defined in remark \ref{rm2} and
\medskip
\[
\mathbbm{1}_{A}: N_{\mathcal{A}_{\infty}}(\Ainf (A,A))\to N_{\mathcal{A}_{\infty}}(\Ainf (A,A))
\]
that, on an $n$-simplex $e$, is given by
\medskip
\[
\mathbbm{1}_{A}(e)=k
\]
where $k$ is defined in remark \ref{rm2}.
\end{defi}
\medskip
\begin{proposition}
The maps of simplicial sets 
\medskip
\[
\begin{gathered}
\mu_{(A,B,C)}: N_{\mathcal{A}_{\infty}}(\Ainf (A,B)) \times N_{\mathcal{A}_{\infty}}(\Ainf (B,C))\to N_{\mathcal{A}_{\infty}}(\Ainf (A,C)) \\
\mathbbm{1}_A: N_{\mathcal{A}_{\infty}}(\Ainf (A,A))\to N_{\mathcal{A}_{\infty}}(\Ainf (A,A))
 \end{gathered} 
 \]
satisfy the identities:
\[ 
\left \{
  \begin{tabular}{ccc}
  $\mu_{(A,C,D)}\circ (\mu_{(A,B,C)}\otimes Id_{\Ainf (C,D)})= \mu_{(A,B,D)}\circ (Id_{\Ainf (A,B)}\otimes \mu_{(B,C,D)})$ \\
  $\mu_{(A,A,B)}\circ (\mathbbm{1}_A\otimes Id_{\Ainf (A,B)})=Id_{\Ainf (A,B)}$ \\
  $\mu_{(A,B,B)}\circ (Id_{\Ainf (A,B)}\otimes \mathbbm{1}_B)=Id_{\Ainf (A,B)}$
  \end{tabular}
  \right.
\]
\medskip
\end{proposition}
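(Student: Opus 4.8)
The plan is to transport the three identities across the bar construction and reduce them to the associativity and unitality of $M$ and $\mathbbm{1}_M$ supplied by \cite{Ly}, together with a comonoid structure on $B(dg[\Delta^n])^+$. Recall that an $n$-simplex $f\in N_{\Ainf}(\Ainf(A,B))_n$ is the same datum as a morphism of counital dg-cocategories $B(f)^+\colon B(dg[\Delta^n])^+\to B(\Ainf(A,B))^+$, and that this correspondence is bijective. By the very definition of $h$ and $k$ in remark \ref{rm2}, under this dictionary one has
\[
B(\mu_{(A,B,C)}(f,g))^+ = M\circ (B(f)^+\otimes B(g)^+)\circ \Delta_{B(dg[\Delta^n])^+}, \qquad B(\mathbbm{1}_A(e))^+=\mathbbm{1}_M\circ B(e)^+ .
\]
Since $B(-)^+$ is injective on these hom-sets, it suffices to verify each identity after applying $B(-)^+$, i.e. as an equality of morphisms of dg-cocategories out of $C:=B(dg[\Delta^n])^+$.

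For associativity I would take three $n$-simplices $f,g,l$, write $F=B(f)^+$, $G=B(g)^+$, $L=B(l)^+$ and $\Delta=\Delta_{B(dg[\Delta^n])^+}$, and expand both sides using the dictionary and the interchange (bifunctoriality) law for $\otimes$ of dg-cocategory morphisms. A short computation collapses the left-hand side to
\[
M\circ (M\otimes \id)\circ (F\otimes G\otimes L)\circ (\Delta\otimes \id)\circ \Delta
\]
and the right-hand side to
\[
M\circ (\id\otimes M)\circ (F\otimes G\otimes L)\circ (\id\otimes \Delta)\circ \Delta .
\]
These coincide as soon as we invoke the associativity $M\circ (M\otimes \id)=M\circ (\id\otimes M)$ of \cite{Ly} and the coassociativity $(\Delta\otimes \id)\circ \Delta=(\id\otimes \Delta)\circ \Delta$ of the diagonal.

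The two unit laws are handled symmetrically. After applying $B(-)^+$, the left unit law reads $M\circ (\mathbbm{1}_M\otimes \id)\circ (B(e)^+\otimes B(f)^+)\circ\Delta = B(f)^+$. Here the unitality of $M$ with respect to $\mathbbm{1}_M$ from \cite{Ly} replaces $M\circ(\mathbbm{1}_M\otimes\id)$ by the left unit constraint $\lambda\circ(\varepsilon\otimes\id)$; since $B(e)^+$ is a morphism of \emph{counital} dg-cocategories it commutes with the counit, so $\varepsilon\circ B(e)^+=\varepsilon_{C}$, and the counit law $(\varepsilon\otimes\id)\circ\Delta=\id$ then collapses the whole expression to $B(f)^+$. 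In particular the outcome no longer depends on $e$, which is exactly what makes the composite equal $\id_{\Ainf(A,B)}$; the right unit law follows by the mirror argument using $(\id\otimes\varepsilon)\circ\Delta=\id$ and the preceding lemma, which guarantees that $M$ and $\mathbbm{1}_M$ restrict from the non-unital to the unital functor cocategories so that no extra checking is needed there.

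The formal backbone of all three identities is thus borrowed: the associativity and unitality of $M$ and $\mathbbm{1}_M$ are precisely the statement from \cite{Ly} that $\Ainf$-categories form a category enriched in counital dg-cocategories. Consequently the genuinely new input, and the step I expect to be the main obstacle, is showing that $\Delta_{B(dg[\Delta^n])^+}$ is a \emph{coassociative and counital} morphism of dg-cocategories, i.e. that $B(dg[\Delta^n])^+$ is a comonoid in the monoidal category of counital dg-cocategories. Because the hom-complexes of $dg[\Delta^n]$ are one-dimensional, this diagonal is completely explicit on the generating chains, so coassociativity and counitality can be checked by hand; the care required is in the sign bookkeeping inherited from the bar differential and in verifying that $\Delta$ is natural in $[n]\in\Delta$, which is what upgrades the level-wise identities above to genuine identities of maps of simplicial sets.
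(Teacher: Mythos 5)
Your proposal is correct and takes essentially the same route as the paper: the paper's own proof is a one-line reduction to precisely the three ingredients you isolate, namely the coassociativity (and counitality) of $\Delta_{B^+(dg[\Delta^n])}$, the functoriality/associativity of $\otimes$, and the associativity and unitality of $M$ and $\mathbbm{1}_M$ from \cite{Ly}, with your expansion of the unit law (independence of the output from $e$, via $\varepsilon\circ B(e)^+=\varepsilon_{B(dg[\Delta^n])^+}$ and the counit identity) filling in exactly what the paper leaves implicit. The only difference is one of emphasis: the comonoid structure on $B(dg[\Delta^n])^+$ that you single out as the main remaining obstacle is treated by the paper as an already-available input (as is its naturality in $[n]$, which in any case belongs to the well-definedness of $\mu_{(A,B,C)}$ as a simplicial map rather than to this proposition), not as something proved here.
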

\begin{proof}
It follows from the coassociativity of $\Delta_{B^+(dg[\Delta^n])}$, the associativity of tensor product $\otimes$ and associativity and unitality of $M$ and $\mathbbm{1}_M$.

\end{proof}
\medskip
We have at this point all the necessary to define the $(\infty,2)$-category of $\Ainf$-categories.
\medskip
\begin{defi}\label{def8}
The $(\infty,2)$-category of $\Ainf$-categories, denoted by $\Ainf Cat_{(\infty,2)}$, is the $(SSet,Joy)$-preSegal category $(S,X)$ defined as:
\medskip
\[
S=Ob(\Ainf Cat)
\] 
and
\medskip
\[
X:\Delta^{op}_{S}\to SSet
\]
given by 
\medskip
\[
X(a_0)=\ast
\] 
and, for a sequence of $\Ainf$-categories $(a_0,\cdots, a_n)$, by
\medskip
\[
X(a_0,\cdots, a_n)=N_{\mathcal{A}_{\infty}}(\Ainf (a_0,a_1))\times \cdots N_{\mathcal{A}_{\infty}}(\Ainf (a_{n-1},a_n))
\]
The induced maps of simplicial sets, generated by the morphisms in $\Delta_{S}^{op}$
\medskip
\[ 
\left \{
  \begin{tabular}{ccc}
  $\sigma^n_j: (a_0,\cdots, a_j, a_j, \cdots, a_n)\to (a_0,\cdots, a_n)$ \\
  \\
  $d^n_j: (a_0,\cdots, \hat{a}_j, \cdots, a_n)\to (a_0,\cdots, a_n)$
  \end{tabular}
  \right.
\]
are given:
\begin{itemize}
 \item 
 for $0\le j\le n$
\medskip
\[
X(\sigma^n_j):X(a_0,\cdots, a_n) \to X(a_0,\cdots, a_j, a_j, \cdots, a_n)
\]
by
\medskip
\[
X(\sigma^n_j)=Id^{j}_{}\times \mathbbm{1}_{a_j} \times Id^{(n-j-1)}
\]
\item
for $1\le j\le n-1$ 
\medskip
\[
X(d^n_j):X(a_0,\cdots, a_n) \to X(a_0,\cdots, \hat{a}_j, \cdots, a_n) 
\]
by
\medskip
\[
X(d^n_j)=Id^{(j-1)}_{}\times \mu_{(a_{j-1},a_j,a_{j+1})} \times Id^{(n-j)}
\]
\item
for $j=0,n$, by
\medskip
\[ 
\left \{
  \begin{tabular}{ccc}
  $X(d^n_0)=\pi_{N_{\mathcal{A}_{\infty}}(\Ainf (a_1,a_2))\times \cdots N_{\mathcal{A}_{\infty}}(\Ainf (a_{n-1},a_n))}$ \\
  \\
  $X(d^n_n)=\pi_{N_{\mathcal{A}_{\infty}}(\Ainf (a_0,a_1))\times \cdots N_{\mathcal{A}_{\infty}}(\Ainf (a_{n-2},a_{n-1}))}$
  \end{tabular}
  \right.
\]
where $\pi$ are the relative projections.
\end{itemize}
\end{defi}
\medskip
\begin{proposition}
The above definition defines a fibrant Segal category in the model category $(SSet, Joy)$, hence an $(\infty,2)$-category.
\end{proposition}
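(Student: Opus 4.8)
The plan is to verify the two conditions characterizing a fibrant object of $(Seg_{(SSet,Joy)}, Proj)$: local fibrancy and the Segal condition. Before that, one must confirm that $(S,X)$ is genuinely an $(SSet,Joy)$-enriched preSegal category, i.e.\ that the assignments $X(\sigma^n_j)$, $X(d^n_j)$, together with the outer projections, assemble into a functor $X\colon \Delta^{op}_S\to SSet$ with $X[a_0]=\ast$ final. The only point to check is compatibility with the simplicial identities, and this follows formally from the strict associativity of the cartesian product, the coassociativity of $\Delta_{B^+(dg[\Delta^n])}$, and the associativity and unitality of $\mu$ and $\mathbbm{1}$ established above. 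This already makes $(S,X)$ a preSegal category.

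For local fibrancy, I would observe that for each sequence $a_0,\dots,a_n$ the object
\[
X[a_0,\dots,a_n]=N_{\mathcal{A}_{\infty}}(\Ainf(a_0,a_1))\times\cdots\times N_{\mathcal{A}_{\infty}}(\Ainf(a_{n-1},a_n))
\]
is a finite cartesian product of nerves. By the result of \cite{Fao}, each $N_{\mathcal{A}_{\infty}}(\Ainf(a_i,a_{i+1}))$ is a weak Kan complex, hence fibrant in $(SSet,Joy)$. Since the inner horn extension property is checked coordinatewise and is therefore inherited by finite products, and since $X[a_0]=\ast$ is fibrant, every $X[a_0,\dots,a_n]$ is a fibrant object of $(SSet,Joy)$.

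For the Segal condition, the key observation is that by the very definition of $X$ the Segal map is an isomorphism: one has by construction
\[
X[a_0,\dots,a_n]=X[a_0,a_1]\times\cdots\times X[a_{n-1},a_n],
\]
and the structure maps were arranged precisely so that the canonical map to $\prod_{i=1}^n X[a_{i-1},a_i]$ is the identity. It then suffices to know that this strict product computes the homotopy product in $(SSet,Joy)$. In any model category the homotopy product of a family is computed by the strict product of fibrant replacements; since each factor $X[a_{i-1},a_i]$ is already fibrant by the previous step, the strict product is the homotopy product. Hence the Segal map exhibits $X[a_0,\dots,a_n]$ as the homotopy product of the $X[a_{i-1},a_i]$, so $(S,X)$ is a Segal category.

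Combining the two conditions, $(S,X)$ is a locally fibrant Segal category, hence a fibrant object of $(Seg_{(SSet,Joy)}, Proj)$, i.e.\ an $(\infty,2)$-category. This proposition is relatively soft given the setup: the Segal map is an equality rather than merely a weak equivalence, so no homotopical computation is required there. The one point demanding care, and the place where the real input enters, is the fibrancy of each nerve $N_{\mathcal{A}_{\infty}}(\Ainf(a_i,a_{i+1}))$ in the Joyal structure, which is exactly the content imported from \cite{Fao} and which underwrites both the local fibrancy and the identification of the strict product with the homotopy product.
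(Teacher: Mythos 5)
Your proof is correct and follows essentially the same route as the paper's: functoriality from the associativity and unitality of $\mu$ and $\mathbbm{1}$, local fibrancy from each factor being a weak Kan complex by \cite{Fao}, and the Segal condition holding because the Segal map is the identity on a product of fibrant objects, which therefore computes the homotopy product. The only difference is that you spell out explicitly what the paper leaves implicit, namely that inner-horn lifting is inherited by finite products and that strict products of fibrant objects model homotopy products.
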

\begin{proof}
The fact that $X$ defines a functor follows from the associativity and unitality of $\mu$. Clearly, each of the simplicial sets in the image of $X$ is fibrant in $(SSet, Joy)$ because they are $(\infty,1)$-categories. Moreover the map of simplicial sets induced by $X$
\medskip
\[
X(a_0,\cdots,a_n)\to X(a_0,a_1)\times \cdots \times X(a_{n-1},a_n)
\]
is the identity and hence, being all the objects fibrant, it exhibits $X(a_0,\cdots,a_n)$ as the homotopy product in $(SSet,Joy)$ of $\{X(a_{i-1},a_i)\}_{i=1 \cdots n}$.

\end{proof}
\medskip
\begin{defi}
The $(\infty,2)$-category of dg-categories, denoted by $dgCat_{(\infty,2)}$, is the $(SSet,Joy)$-preSegal category $(T,Y)$ defined as:
\medskip
\[
T=Ob(dgCat)
\] 
and
\medskip
\[
Y:\Delta^{op}_{T}\to SSet
\]
given by the composition
\medskip
\[
\Delta^{op}_{Ob(dgCat)}\xrightarrow{} \Delta^{op}_{Ob(\Ainf Cat)}\xrightarrow{X} SSet
\]
where the first arrow is induced by the obvious inclusion $i: Ob(dgCat)\to Ob(\Ainf Cat)$ and $X$ is the functor defined in \ref{def8}
\end{defi}	
\medskip
It is clear that the above definition defines a fibrant Segal category in the model category $(SSet, Joy)$, hence an $(\infty,2)$-category.
\begin{remark}
For dg-categories $d_0,d_1$, the simplicial set $Y(d_0,d_1)$ equals, by definition, the dg-nerve of the dg-category of (unital) $\Ainf$-functors
\medskip
\[
Y(d_0,d_1)=N_{dg}(\Ainf (d_0,d_1))
\]
The Segal category structure defines a strictly associative composition maps on the dg-nerves
\medskip
\[
\mu_{(d_0,d_1,d_2)}: N_{dg}(\Ainf (d_0,d_1)) \times N_{dg}(\Ainf (d_1,d_2))\to N_{dg}(\Ainf (d_0,d_2))
\]
and units
\medskip
\[
\mathbbm{1}_{d_0}: N_{dg}(\Ainf (d_0,d_0))\to N_{dg}(\Ainf (d_0,d_0))
\] 
\end{remark}
\newpage
\section{The $(\infty,1)$-category of dg-categories as a model for simplicial localization. }
In this section we introduce the $(\infty,1)$-category of dg-categories $dgCat_{(\infty,1)}$, which is defined as the $(\infty,1)$-category associated to $dgCat_{(\infty,2)}$. We prove that its mapping spaces of morphisms are weakly-homotopy equivalent to the mapping spaces in the Dwyer-Kan localization \cite{DK} $L_{Tab}(dgCat)$ and hence $dgCat_{(\infty,1)}$ should be understood as the correct $(\infty,1)$-category associated to the model category $(dgCat,Tab)$.
\subsection{$dgCat_{(\infty,1)}$ as a model for the simplicial localization}
\medskip
\begin{defi}
The $(\infty,1)$-category of dg-categories, denoted by $dgCat_{(\infty,1)}$, is given by
\medskip
\[
dgCat_{(\infty,1)}=(dgCat_{(\infty,2)})^{\circ}
\]
where $(dgCat_{(\infty,2)})^{\circ}$ is the associated $(\infty,1)$-category to an $(\infty,2)$-category as by remark \ref{rm5}.
\end{defi}
By the definition of $dgCat_{(\infty,2)}$, $dgCat_{(\infty,1)}$ is a genuine fibrant simplicial category. We have the following theorem.
\medskip
\begin{thm}
Given dg-categories $C, D$ there exists weak homotopy equivalences of simplicial sets
\medskip
\[
Map_{Ho(dgCat)}(C,D) \xrightarrow{} Map_{dgCat_{(\infty,1)}}(C,D) 
\]
where $Map_{Ho(dgCat)}(C,D)$ is the mapping space in $L_{Tab}(dgCat)$.
\end{thm}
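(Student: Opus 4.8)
The plan is to identify both mapping spaces with the maximal Kan complex (the core) of the dg-nerve of the internal Hom, deducing the equivalence from T\"oen's explicit description of the Dwyer--Kan mapping spaces together with Theorem \ref{th2}. First I would unwind the definitions: by construction $dgCat_{(\infty,1)}=(dgCat_{(\infty,2)})^{\circ}$, so that $Map_{dgCat_{(\infty,1)}}(C,D)=(N_{dg}(\Ainf(C,D)))^{\circ}$, the maximal Kan complex contained in the quasi-category $N_{dg}(\Ainf(C,D))$. On the other side I would invoke T\"oen's computation \cite{To} of the mapping spaces of $L_{Tab}(dgCat)$: for dg-categories $C,D$ there is a natural weak equivalence $Map_{L_{Tab}(dgCat)}(C,D)\simeq |w\,\mathbb{R}Hom(C,D)|$, the classification space (the nerve of the subcategory of quasi-equivalences) of the internal Hom dg-category $\mathbb{R}Hom(C,D)=Int((Mod^{\bullet}(Q(D),E))^{rqr})$.

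By Theorem \ref{th2} this dg-category is quasi-equivalent to $\Ainf(C,D)$. Since both $|w(-)|$ and the core of the dg-nerve are invariant under quasi-equivalences of dg-categories (for the latter because $N_{dg}$ sends quasi-equivalences to categorical equivalences by Proposition \ref{pr4}, all dg-categories being fibrant in $(dgCat,Tab)$, and because $(-)^{\circ}$ preserves weak equivalences between fibrant objects by the properties recalled in remark \ref{rm5}), the problem reduces to a single comparison internal to one dg-category. The heart of the argument is then the lemma I would prove next: for every dg-category $\mathcal{E}$ there is a natural weak homotopy equivalence $|w\mathcal{E}|\xrightarrow{\ \simeq\ }(N_{dg}(\mathcal{E}))^{\circ}$ between its classification space and the core of its dg-nerve. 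I would establish this by comparing homotopy groups basepoint by basepoint. Both spaces have $\pi_0$ equal to the set of isomorphism classes of objects of $H^0(\mathcal{E})$; for a fixed object $x$ both have $\pi_1=(H^0(\mathcal{E}(x,x)))^{\times}$, the group of units, and $\pi_n=H^{1-n}(\mathcal{E}(x,x))$ for $n\ge 2$. For the core these groups are read off directly from the definition of the dg-nerve, whereas for the classification space they come from the Dwyer--Kan identification \cite{DK} of $|w\mathcal{E}|$ with $\coprod_{[x]}B\,\mathrm{haut}(x)$, the homotopy of $B\,\mathrm{haut}(x)$ being governed by the derived self-equivalences of $x$.

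To make the comparison a genuine map rather than an abstract equivalence, I would produce it as induced by the localization. The evident functor $dgCat\to dgCat_{(\infty,1)}$, sending a dg-category to itself and a dg-functor to the corresponding vertex of the core (functorially via the Segal composition $\mu$), carries quasi-equivalences to equivalences; by the universal property of the Dwyer--Kan localization it therefore factors through a functor $L_{Tab}(dgCat)\to dgCat_{(\infty,1)}$, and the asserted map is its effect on mapping spaces. Combining the steps then yields the chain $Map_{L_{Tab}(dgCat)}(C,D)\simeq |w\,\mathbb{R}Hom(C,D)|\simeq |w\,\Ainf(C,D)|\simeq (N_{dg}(\Ainf(C,D)))^{\circ}=Map_{dgCat_{(\infty,1)}}(C,D)$.

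The main obstacle I anticipate is precisely the lemma, and within it the matching of the higher homotopy groups. The subtlety is that the relevant module model category is not simplicial, so the derived self-equivalence spaces $\mathrm{haut}(x)$ cannot be extracted from a naive simplicial enrichment; one must pass through the dg-enrichment (Dold--Kan applied to the truncated Hom-complexes) to see that the classification space genuinely carries $H^{1-n}(\mathcal{E}(x,x))$ in its higher homotopy, matching the core. Care is likewise needed to verify that the map constructed from the localization functor is the one realizing this homotopy-group isomorphism, and not merely some abstract equivalence; this amounts to tracking basepoints and naturality through T\"oen's identification and the definition of $N_{dg}$.
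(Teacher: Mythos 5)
Your route is genuinely different from the paper's, which never mentions classification spaces: the paper proves the theorem by the chain
\[
Map_{Ho(dgCat)}(C,D)\simeq Map_{Ho(dgCat)}(dg[\Delta^0]\otimes^{\mathbb{L}}C,D)\simeq Map_{Ho(dgCat)}(U(dg[\Delta^0]),\Ainf(C,D))\simeq (N_{dg}(\Ainf(C,D)))^{\circ},
\]
using the Quillen adjunction $U(dg[-])\dashv N_{dg}$ of Proposition \ref{pr4}, the mapping-space form of the closed monoidal structure on $Ho(dgCat)$, and Theorem \ref{th2}, together with the fact that maps $\Delta^0\to X$ in $Ho(SSet)_{Joy}$ compute the core $X^{\circ}$. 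Your alternative, via T\"oen's identification of $Map_{L_{Tab}(dgCat)}(C,D)$ with the nerve of the category of quasi-isomorphisms of cofibrant right quasi-representable bimodules, could in principle also work, but as written it contains a genuine gap: the key lemma is false. For a bare dg-category $\mathcal{E}$, the only available meaning of $w\mathcal{E}$ is the subcategory of $Z^0(\mathcal{E})$ of closed degree-zero morphisms that become invertible in $H^0(\mathcal{E})$, and with this definition neither of your two claims holds: $|w\mathcal{E}|$ is \emph{not} weakly equivalent to $(N_{dg}(\mathcal{E}))^{\circ}$ in general, and $|w(-)|$ is \emph{not} invariant under quasi-equivalences. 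Concretely, let $\mathcal{E}$ have one object $x$ with $Hom^{\bullet}_{\mathcal{E}}(x,x)=\mathbb{K}[\epsilon]/(\epsilon^{2})$, $\deg(\epsilon)=-1$, zero differential. Then $w\mathcal{E}$ is the one-object groupoid with morphisms $\mathbb{K}^{\times}$, so $|w\mathcal{E}|=B\mathbb{K}^{\times}$ has $\pi_{2}=0$, whereas $(N_{dg}(\mathcal{E}))^{\circ}$ has $\pi_{2}=\pi_{1}(Map_{N_{dg}(\mathcal{E})}(x,x),Id_x)=H^{-1}(Hom^{\bullet}_{\mathcal{E}}(x,x))=\mathbb{K}$. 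The same example, embedded into the dg-category of cofibrant modules over $\mathbb{K}[\epsilon]$ quasi-isomorphic to the free module, kills the quasi-equivalence invariance of $|w(-)|$.

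The reason is that the Dwyer--Kan identification $|wM|\simeq\coprod_{[x]}B\,\mathrm{haut}(x)$, with $\mathrm{haut}(x)$ the \emph{derived} self-equivalences, is a theorem about a model category $M$ (it needs the abundance of weakly equivalent objects and zigzags among them); it is not a property of a bare dg-category with its naive subcategory of $H^0$-invertible closed morphisms, which is exactly why your reduction "both invariants are quasi-equivalence invariant, so it suffices to compare them on $\Ainf(C,D)$" breaks down at its first claim. The repair is to reverse the order of your two steps: prove the comparison $|w\,\mathbb{R}Hom(C,D)|\simeq (N_{dg}(\mathbb{R}Hom(C,D)))^{\circ}$ directly for $\mathbb{R}Hom(C,D)=Int((Mod^{\bullet}(Q(C),D))^{rqr})$, where the ambient $Ch(\mathbb{K})$-enriched model category of bimodules makes Dwyer--Kan theory and the computation of $\mathrm{haut}$ via truncated Hom-complexes legitimate, and only then transport the \emph{core} (not the classification space) along the quasi-equivalence of Theorem \ref{th2}, using that $N_{dg}$ sends quasi-equivalences to categorical equivalences (Proposition \ref{pr4}, all dg-categories being fibrant) and that $(-)^{\circ}$ preserves equivalences between fibrant objects (Remark \ref{rm5}). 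With this correction your argument becomes a valid alternative proof, at the cost of invoking the full Dwyer--Kan classification machinery that the paper's adjunction argument avoids entirely.
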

\begin{proof}
Recall that from \ref{pr4}, we have a Quillen adjunction of model categories 
\medskip
\[
\adj{U(dg[-])}{(SSet,Joy)}{(dgCat,Tab)}{N_{dg}}
\]
which induces natural weak-equivalences 
\medskip
\[
Map_{Ho(dgCat)}(U(dg[K]),D)\xrightarrow{\sim} Map_{Ho(SSet)_{Joy}}(K,N_{dg}(D))
\]
Consider the dg-category $dg[\Delta^0]$. This is a cofibrant dg-category and it is equal, by construction, to its enveloping dg-category $U(dg[\Delta^0])$. This implies that, for every dg-category $C$, we have an equivalence of dg-categories, 
\medskip
\[
dg[\Delta^0]\otimes ^{\mathbb{L}} C\xrightarrow{\sim}  dg[\Delta^0]\otimes Q(C) \xrightarrow{\sim} C
\]
This dg-equivalence, induces an homotopy equivalence in the mapping spaces
\medskip
\[
Map_{Ho(dgCat)}(C,D)\xrightarrow{\sim} Map_{Ho(dgCat)}(dg[\Delta^0]\otimes ^{\mathbb{L}} C,D)
\]
The closed symmetric monoidal structure on $Ho(dgCat)$ and theorem \ref{th2} gives natural weak-equivalences
\medskip
\[
Map_{Ho(dgCat)}(dg[\Delta^0]\otimes ^{\mathbb{L}} C,D) \xrightarrow{\sim} Map_{Ho(dgCat)}(U(dg[\Delta^0]), \Ainf (C,D))
\]
and, again, the adjunction $(U(dg[-]),N_{dg})$, provides natural weak-equivalences 
\medskip
\[
Map_{Ho(dgCat)}(U(dg[\Delta^0]), \Ainf (C,D)) \xrightarrow{\sim} Map_{Ho(SSet)_{Joy}}(\Delta^0,N_{dg}(\Ainf (C,D)))
\]
The mapping space $Map_{Ho(SSet)_{Joy}}(\Delta^0,N_{dg}(\Ainf (C,D)))$ is homotopy equivalent to the mapping space in the $(\infty,1)$-category of $(\infty,1)$-categories \cite{LHT} and hence we have equivalences
\medskip
\[
Map_{Ho(SSet)_{Joy}}(\Delta^0,N_{dg}(\Ainf (C,D)))\xrightarrow{\sim} (Map_{SSet}(\Delta^0,N_{dg}(\Ainf (C,D))))^{\circ}
\]
but clearly
\medskip
\[
(Map_{SSet}(\Delta^0,N_{dg}(\Ainf (C,D))))^{\circ}=(N_{dg}(\Ainf (C,D)))^{\circ}
\]
which completes the proof.
\end{proof}

\newpage
	
\section{Application: Hochschild Cohomology for $\Ainf$-categories.}
There are two equivalent approaches to define the Hochschild cohomology of a dg-category. The first approach is through derived functors, namely, for a dg-category $C$, consider the model category of dg-modules over $C\otimes^{\mathbb{L}} C^{op}$. The dg-category $C$ is identified with the representable dg-module
\medskip
\[
C(x,y)=Hom_{C}(x,y)
\]
The Hochschild complex is defined as 
\medskip
\[
\mathbb{HH}(C,C)=\mathbb{R}Hom^{\bullet}_{Mod(C\otimes^{\mathbb{L}} C^{op})}(C,C)
\] 
and its cohomology, denoted by $HH^{\bullet}(C,C)$, is the Hochschild cohomology of $C$. Because the model category $Mod(C\otimes^{\mathbb{L}} C^{op})$ is $Ch(\mathbb{K})$-enriched, the result of \cite{To} implies the existence of an equivalence of complexes
\medskip
\[
\mathbb{R}Hom^{\bullet}_{Mod(C\otimes^{\mathbb{L}} C^{op})}(C,C)\simeq Hom_{\mathbb{R}Hom(C,C)}^{\bullet}(C,C)
\]
On the other hand, one can take a resolution of $C$ as $C\otimes^{\mathbb{L}} C^{op}$-module and get an explicit model for the Hochschild complex. As showed in \cite{FMT}, there exists a suitable resolution for which one gets an equivalence of complexes
\medskip
\begin{equation}\label{eq34}
\mathbb{HH}(C,C)\simeq Hom_{\Ainf (C,C)}^{\bullet}(Id_C,Id_C)
\end{equation}
The first result of this paper reconciles the two approaches showing that the dg-categories $\Ainf (C,C)$ and $\mathbb{R}Hom(C,C)$ are equivalent. The construction of the $(\infty,2)$-category $dgCat_{(\infty,2)}$ allows to give a topological interpretation of the Hochschild complex. Namely, consider the $(\infty,1)$-category of endomorphisms of $C$ in $dgCat_{(\infty,2)}$ given by $N_{dg}(\Ainf (C,C))$. We can associate to it a fibrant simplicial category, (via the homotopy coherent nerve adjunction for instance) and extract a topological space of maps at $Id_C$, that we denote by
\medskip
\begin{equation}
End_{dgCat_{(\infty,2)}}(Id_C)=Map_{N_{dg}(\Ainf (C,C))}(Id_C,Id_C)
\end{equation}
The homotopy groups of this space are related to the Hochschild cohomology of the dg-category $C$ in the following sense.
\medskip
\begin{proposition}
For any dg-category $C$, $i\ge 0$ we have
\medskip
\[
\pi_i(Map_{N_{dg}(\Ainf (C,C))}(Id_C,Id_C))=HH^{-i}(C,C)
\]
\end{proposition}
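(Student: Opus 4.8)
The plan is to deduce the proposition from two ingredients that are already available: the quasi-isomorphism (\ref{eq34}) identifying the Hochschild complex with the endomorphism complex of $Id_C$ inside the dg-category $\Ainf(C,C)$, and a general computation of the homotopy groups of mapping spaces in the dg-nerve of an arbitrary dg-category. First I would isolate the general statement: for any dg-category $\mathcal{D}$, objects $x,y$, and a basepoint given by a degree-zero cocycle $f\in Hom^0_{\mathcal{D}}(x,y)$, there are natural isomorphisms
\[
\pi_i\big(Map_{N_{dg}(\mathcal{D})}(x,y), f\big) \cong H^{-i}\big(Hom^\bullet_{\mathcal{D}}(x,y)\big), \qquad i\ge 0,
\]
with $\pi_0$ read as a set in bijection with $H^0$. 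Granting this, the proposition is immediate by taking $\mathcal{D}=\Ainf(C,C)$ and $x=y=Id_C$, whose identity endomorphism is the natural basepoint: the right-hand side becomes $H^{-i}(Hom^\bullet_{\Ainf(C,C)}(Id_C,Id_C))$, and (\ref{eq34}) together with $HH^{-i}(C,C)=H^{-i}(\mathbb{HH}(C,C))$ identifies this with $HH^{-i}(C,C)$.

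The core of the argument is therefore the general mapping-space computation, which I would carry out via Dold--Kan. Recall that an $n$-simplex of $N_{dg}(\mathcal{D})$ consists of a string of objects together with, for each chain $i_0<\cdots<i_k$, an element of $Hom^{1-k}_{\mathcal{D}}(X_{i_0},X_{i_k})$ subject to Lurie's structure equation. Fixing $x,y$ and modelling the mapping space by a one-sided (pinched) mapping space $Hom^R_{N_{dg}(\mathcal{D})}(x,y)$, I would identify its $n$-simplices with the Dold--Kan simplices of the good truncation $\tau_{\le 0}Hom^\bullet_{\mathcal{D}}(x,y)$, regarded as a connective chain complex via $(-)_m=Hom^{-m}_{\mathcal{D}}(x,y)$. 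Since the Dold--Kan functor sends such a connective complex to a simplicial abelian group, hence a Kan complex, whose homotopy groups are its homology, one gets $\pi_i=H_i(\tau_{\le 0})=H^{-i}(Hom^\bullet_{\mathcal{D}}(x,y))$ for $i\ge 0$. Being a simplicial abelian group, each path component is homotopy equivalent to every other, so the answer is independent of the chosen basepoint; this covers the groups for $i\ge 1$, while $\pi_0$ is visibly $H^0$.

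The step I expect to be the main obstacle is the precise matching of the quasi-categorical mapping space of $N_{dg}(\mathcal{D})$ with this Dold--Kan model. One must select a rigid model for the mapping space of a quasi-category (for instance a pullback of the path fibration, or Lurie's left/right mapping spaces), verify it is a Kan complex, and then perform the bookkeeping that converts the higher coherence data $f_{i_0\cdots i_k}$ of an $n$-simplex into the normalized chains of the truncated complex, checking that Lurie's structure equation translates exactly into the Dold--Kan face and degeneracy relations. This equivalence is essentially contained in \cite{LHA} and in the analysis of the nerve carried out in \cite{Fao}, so in practice I would invoke those results rather than reprove the comparison; the only genuinely new work is the translation between cohomological and homotopical indexing, which fixes the sign $-i$ in the final formula.

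Finally I would assemble the pieces. Specialising the general isomorphism to $\mathcal{D}=\Ainf(C,C)$ and $x=y=Id_C$ yields
\[
\pi_i\big(Map_{N_{dg}(\Ainf(C,C))}(Id_C,Id_C)\big) \cong H^{-i}\big(Hom^\bullet_{\Ainf(C,C)}(Id_C,Id_C)\big),
\]
and substituting the quasi-isomorphism (\ref{eq34}) together with $HH^{-i}(C,C)=H^{-i}(\mathbb{HH}(C,C))$ produces $HH^{-i}(C,C)$, which is exactly the asserted identification.
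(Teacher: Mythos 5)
Your proposal is correct and follows essentially the same route as the paper: the paper's proof likewise reduces to the two facts that $\pi_i$ of the mapping space in the dg-nerve computes $H^{-i}$ of the complex $Hom^{\bullet}_{\Ainf(C,C)}(Id_C,Id_C)$, and that this complex is identified with $\mathbb{HH}(C,C)$ via (\ref{eq34}). The only difference is that you spell out the first fact explicitly via the pinched mapping space and Dold--Kan (handling the basepoint issue along the way), whereas the paper simply asserts it, deferring---just as you ultimately do---to \cite{LHA} and the standard comparison of mapping-space models.
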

\begin{proof}
The proof follows from the fact that the $i$-th homotopy groups of the mapping space are the $-i$-th cohomology of the complex $Hom^{\bullet}_{\Ainf (C,C)}(Id_C,Id_C)$ which by (\ref{eq34}) are equal to $-i$-th Hochschild cohomology of $C$.

\end{proof}
\medskip
The approach via explicit resolutions extends the definition of Hochschild cohomology to $\Ainf$-categories \cite{Ke2}, \cite{Tr}.
\medskip
\begin{defi}\label{defi87}
Given an $\Ainf$-category $A$, its Hochschild complex is
\medskip
\[
\mathbb{HH}(A,A)=Hom^{\bullet}_{\Ainf (A,A)}(Id_A,Id_A)
\]
and its cohomology $HH^{\bullet}(A,A)$ is the Hochschild cohomology of the $\Ainf$-category $A$.
\end{defi}
\medskip
In analogy with the dg-case, we can consider the $(\infty,1)$-category of endomorphisms of $A$ in $\Ainf Cat_{(\infty,2)}$, namely $N_{\Ainf}(\Ainf (A,A))$, and extract a topological space of maps at $Id_A$
\medskip
\[
End_{\Ainf Cat_{(\infty,2)}}(Id_A)=Map_{N_{\Ainf}(\Ainf (A,A))}(Id_A,Id_A)
\]
This can be done via the homotopy coherent nerve adjunction or, equivalently, taking the left mapping space \cite{LHT}. In this setting, we can generalize the result of \cite{To} of computation of Hochschild cohomology for dg-categories to the context of $\Ainf$-categories where the technique of derived enrichment do not apply for the lack of a model category of $\Ainf$-categories \cite{LH}. We have the folowing theorem.
\medskip
\begin{theorem}
For any $\Ainf$-category $A$, $i\ge 0$, we have
\medskip
\[
\pi_i(End_{\Ainf Cat_{(\infty,2)}}(Id_A))=HH^{-i}(A,A)
\]
\end{theorem}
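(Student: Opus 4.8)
The plan is to deduce the statement from the general computation of homotopy groups of mapping spaces in the $\Ainf$-nerve, in exact parallel with the dg-case treated in the preceding proposition; the only genuine input specific to the $\Ainf$-setting is that this computation continues to hold for the $\Ainf$-nerve $N_{\Ainf}$ of an arbitrary unital $\Ainf$-category. First I would record the key fact: for any unital $\Ainf$-category $\mathcal{C}$ and any objects $x,y\in Ob(\mathcal{C})$ there are natural isomorphisms
\[
\pi_i\big(Map_{N_{\Ainf}(\mathcal{C})}(x,y)\big)\cong H^{-i}\big(Hom^{\bullet}_{\mathcal{C}}(x,y)\big),\qquad i\ge 0,
\]
where on the left the homotopy groups are based at the identity morphism. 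This is the $\Ainf$-generalisation of Lurie's mapping-space computation for the dg-nerve, and it is exactly the fact invoked (for dg-categories) in the proof of the previous proposition; for the $\Ainf$-nerve it is part of the analysis of \cite{Fao} showing that $N_{\Ainf}(\mathcal{C})$ is an $(\infty,1)$-category.

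Next I would indicate why this identification holds. In the left mapping-space model, an $n$-simplex of $Map_{N_{\Ainf}(\mathcal{C})}(x,y)$ is an $\Ainf$-functor $dg[\Delta^{n+1}]\to\mathcal{C}$ sending the vertex $0$ to $x$ and restricting to the constant functor at $y$ on the sub-dg-category spanned by $\{1,\dots,n+1\}$. Unwinding this through the bar description of $\Ainf$-functors recalled earlier, the data of such a functor is carried by the components $F_k$ evaluated on the chains $(0\,j_1)\otimes(j_1\,j_2)\otimes\cdots\otimes(j_{k-1}\,j_k)$; since these components have degree $1-k$, they range over the non-positive degrees and assemble exactly into the Dold--Kan data for the good truncation $\tau_{\le 0}$ of $Hom^{\bullet}_{\mathcal{C}}(x,y)$. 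The mapping space is thereby weakly equivalent to the Kan complex produced by the Dold--Kan correspondence from this truncated complex, and Dold--Kan gives $\pi_i=H^{-i}$.

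With the general fact in place, the theorem follows by specialisation. Applying it to $\mathcal{C}=\Ainf(A,A)$ and $x=y=Id_A$ gives
\[
\pi_i\big(Map_{N_{\Ainf}(\Ainf(A,A))}(Id_A,Id_A)\big)\cong H^{-i}\big(Hom^{\bullet}_{\Ainf(A,A)}(Id_A,Id_A)\big).
\]
By Definition \ref{defi87} the complex on the right is the Hochschild complex $\mathbb{HH}(A,A)$, so its cohomology is $HH^{-i}(A,A)$; and by the construction of $\Ainf Cat_{(\infty,2)}$ the space $End_{\Ainf Cat_{(\infty,2)}}(Id_A)$ is by definition $Map_{N_{\Ainf}(\Ainf(A,A))}(Id_A,Id_A)$. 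Chaining these identifications gives the claimed isomorphism $\pi_i\big(End_{\Ainf Cat_{(\infty,2)}}(Id_A)\big)=HH^{-i}(A,A)$.

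The hard part will be the general mapping-space computation for $N_{\Ainf}$. In the dg-case the Hom-spaces are honest cochain complexes and one may quote Lurie directly; here $\Ainf(A,A)$ is only an $\Ainf$-category, so one must keep track of the higher operations $m_k$ while identifying the mapping space with a Dold--Kan model. The point is that the $\Ainf$-nerve is engineered so that $\pi_i$ of its mapping spaces reads off the cohomology of the morphism complexes irrespective of strict associativity; consequently no model structure on $\Ainf$-bimodules is required, which is precisely what makes the argument go through where the derived-functor approach of \cite{To} cannot.
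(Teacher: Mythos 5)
Your proposal is correct and follows essentially the same route as the paper: both use the left mapping space model of \cite{LHT} to identify $\pi_i\big(End_{\Ainf Cat_{(\infty,2)}}(Id_A)\big)$ with $H^{-i}\big(Hom^{\bullet}_{\Ainf(A,A)}(Id_A,Id_A)\big)$, and then conclude by Definition \ref{defi87}. The only difference is that you spell out the mapping-space computation (the Dold--Kan unwinding of $\Ainf$-functors $dg[\Delta^{n+1}]\to\Ainf(A,A)$) that the paper compresses into ``one easily sees.''
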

\begin{proof}
Using the the left mapping space \cite{LHT} one easily sees that the homotopy groups of $End_{\Ainf Cat_{(\infty,2)}}(Id_A)$ are given by
\medskip
\[
\pi_i(End_{\Ainf Cat_{(\infty,2)}}(Id_A))=H^{-i}(Hom^{\bullet}_{\Ainf (A,A)}(Id_A,Id_A))
\]
The right hand side of this equation, by \ref{defi87} equals $HH^{-i}(A,A)$

\end{proof}
\newpage
\subsection*{Final remarks}
The Hochschild complex of an $\Ainf$-category has a structure of $B_{\infty}$-algebra \cite{GeJo}. This means that 
\medskip
\[
B=B(Hom^{\bullet}_{\Ainf (A,A)}(Id_A,Id_A))
\] 
the (unreduced) bar construction of $Hom^{\bullet}_{\Ainf (A,A)}(Id_A,Id_A)$, comes equipped with a differential
\medskip
\[
b:B\to B
\] 
a multiplication
\medskip
\[
\mu:B\otimes B\to B
\]
and a unit
\medskip
\[
\varepsilon: \mathbb{K}\to B
\]
making $(B,b,\Delta, \nu, \mu, \varepsilon)$ a unital-counital-dg-bialgebra. Under the identification
\medskip
\[
\mathbb{HH}(A,A)\simeq Hom^{\bullet}_{\Ainf (A,A)}(Id_A,Id_A)
\]
the $B_{\infty}$-algebra structure induces maps of complexes
\medskip
\[
\begin{gathered}
m_2: \mathbb{HH}(A,A)\otimes \mathbb{HH}(A,A)\to \mathbb{HH}(A,A) \\
\mu_{1,1}: \mathbb{HH}(A,A)\otimes \mathbb{HH}(A,A)\to \mathbb{HH}(A,A)[1]
\end{gathered} 
\]
\medskip
Those maps define a Gerstenhaber algebra structure on the Hochschild cohomology, with Gerstenhaber bracket 
\medskip
\[
[-,-]: HH^{p}(A,A)\otimes HH^{q}(A,A)\to HH^{p+q+1}(A,A)
\]
given by
\medskip
\[
[a,b]= \mu_{1,1}(a,b)-(-1)^{(deg(a)+1)(deg(b)+1)} \mu_{1,1}(b,a)
\]
and cup product induced by $m_2$. The following proposition relates this construction with the enrichment of $\Ainf$-categories over dg-cocategories.
\medskip
\begin{proposition}\cite{Ke2}
Given an $\Ainf$-category $A$, the multiplication $\mu$ and unit $\varepsilon$ on $B$ are given by the restriction of the dg-cocategory morphisms
\medskip
\[
\begin{gathered}
M: B(\Ainf(A,A)\otimes B(\Ainf(A,A))\to B(\Ainf(A,A)) \\
\mathbbm{1}_M: B(\Ainf(A,A)\to B(\Ainf(A,A))
\end{gathered} 
\]
to the complex $B$.
\end{proposition}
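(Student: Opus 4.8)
The plan is to recognise $B$ as the endomorphism coalgebra of the single object $Id_A$ inside the counital dg-cocategory $B(\Ainf(A,A))^+$, and then to exploit the fact, as used above (and in \cite{Ly}), that $M$ acts on objects by composition of $\Ainf$-functors while $\mathbbm{1}_M$ is the identity on objects. Since $Id_A\circ Id_A=Id_A$, both structure morphisms preserve the component sitting over $Id_A$, and I expect their restrictions to be exactly $\mu$ and $\varepsilon$.

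First I would recall that for any counital dg-cocategory $\mathcal{B}$ and any object $x$ the hom-object $\mathcal{B}(x,x)$ is itself a counital dg-coalgebra under the cocategory coproduct. Applied to $\mathcal{B}=B(\Ainf(A,A))^+$ and $x=Id_A$, the defining formula for the bar construction identifies $B(\Ainf(A,A))^+(Id_A,Id_A)$ with the bar coalgebra $B=B(Hom^{\bullet}_{\Ainf(A,A)}(Id_A,Id_A))$; under this identification the coproduct $\Delta$, the counit $\nu$ and the differential $b$ are precisely the cocategory structure maps restricted to this component. Next, because $M$ composes $\Ainf$-functors on objects and $Id_A\circ Id_A=Id_A$, the morphism $M$ sends the component indexed by $(Id_A,Id_A)$ into the one indexed by $Id_A$, so it restricts to a morphism of counital dg-coalgebras $\mu\colon B\tensor B\to B$; likewise, $\mathbbm{1}_M$ being the identity on the object $Id_A$, its restriction determines the two-sided unit of $\mu$, namely the coaugmentation $\varepsilon\colon \mathbb{K}\to B$. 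Since $M$ and $\mathbbm{1}_M$ are morphisms of dg-cocategories, these restrictions are automatically compatible with $\Delta$, $\nu$ and $b$, which is exactly the bialgebra compatibility demanded of $(B,b,\Delta,\nu,\mu,\varepsilon)$.

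The step I expect to be the real obstacle is checking that this restricted $\mu$ is the Getzler--Jones $B_{\infty}$-product of \cite{GeJo}, and not merely some associative multiplication on $B$. Both products are assembled by feeding the components $r^d_n$ of morphisms in $\Ainf(A,A)$ into the higher multiplications of $Hom^{\bullet}_{\Ainf(A,A)}(Id_A,Id_A)$, that is, by the brace operations underlying the cup product and the Gerstenhaber bracket. Writing $M$ in the explicit form of \cite{Ly} and matching it term by term, with signs, against the operations of \cite{GeJo} is delicate bookkeeping; once both are expressed in one sign convention no further idea is required, and this reconciliation is the computation carried out by Keller in \cite{Ke2}.
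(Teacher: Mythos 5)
The paper does not actually prove this proposition --- it is imported wholesale from Keller \cite{Ke2}, with the endomorphism-coalgebra viewpoint only sketched in the introductory comments --- so there is no internal argument to compare yours against line by line; your framing ($B$ as the endomorphism coalgebra of $Id_A$ inside $B(\Ainf(A,A))^+$, with $M$ and $\mathbbm{1}_M$ preserving it because $Id_A\circ Id_A=Id_A$) is exactly the one the paper gestures at. There is, however, one concrete flaw in your outline: the claimed identification of $B(\Ainf(A,A))^+(Id_A,Id_A)$ with $B=B(Hom^{\bullet}_{\Ainf(A,A)}(Id_A,Id_A))$ is false. In the bar cocategory the hom-component from $Id_A$ to itself is, by the quiver tensor product, $\bigoplus_{n}\bigoplus_{f_1,\dots,f_{n-1}} Hom^{\bullet}(Id_A,f_1)[1]\otimes\cdots\otimes Hom^{\bullet}(f_{n-1},Id_A)[1]$, where the intermediate objects $f_i$ range over \emph{all} $\Ainf$-functors $A\to A$; the complex $B$ is only the subcoalgebra of chains all of whose intermediate objects equal $Id_A$. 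Because of this, your key step --- that $M$ ``sends the component indexed by $(Id_A,Id_A)$ into the one indexed by $Id_A$'' --- controls only the endpoints of chains, not their intermediate objects, and so does not by itself show that the restriction of $M$ to $B\otimes B$ lands in $B$ rather than merely in the larger component.

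The repair is short but needs to be said: Lyubashenko's $M$ is built from components that take a chain with objects $f_0,\dots,f_k$ tensored with a chain with objects $g_0,\dots,g_l$ to chains whose intermediate objects are composites $g_j\circ f_i$ \cite{Ly}; when every $f_i$ and $g_j$ equals $Id_A$, all such composites equal $Id_A$, so the image of $B\otimes B$ does lie in $B$ (one can alternatively deduce this from compatibility of $M$ with iterated cocomposition, which separates a chain according to its intermediate objects). With that correction, the rest of your outline is sound: compatibility of the restricted $\mu$ and $\varepsilon$ with $b$, $\Delta$, $\nu$ is automatic since $M$ and $\mathbbm{1}_M$ are morphisms of counital dg-cocategories, and your deferral of the sign-level identification with the Getzler--Jones operations of \cite{GeJo} to Keller is legitimate here, since the proposition as stated only asserts that $\mu$ and $\varepsilon$ arise as these restrictions --- which is precisely what the paper itself takes from \cite{Ke2} without proof.
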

\medskip
This proposition suggests that the $(\infty,2)$-categories $dgCat_{(\infty,2)}$ and $\Ainf Cat_{(\infty,2)}$ encode the $B_{\infty}$-algebra structure of the Hochschild complex. Namely, the topological space $End_{\Ainf Cat_{(\infty,2)}}(Id_A)$ of endomorphisms of $Id_A$, comes equipped with two maps
\medskip
\[
\begin{gathered}
m_2: End_{\Ainf Cat_{(\infty,2)}}(Id_A)\times End_{\Ainf Cat_{(\infty,2)}}(Id_A)\to End_{\Ainf Cat_{(\infty,2)}}(Id_A) \\
\mu: End_{\Ainf Cat_{(\infty,2)}}(Id_A)\times End_{\Ainf Cat_{(\infty,2)}}(Id_A)\to End_{\Ainf Cat_{(\infty,2)}}(Id_A)
\end{gathered} 
\]
which appears as a by-product of the $(\infty,2)$-category structure of $\Ainf Cat_{(\infty,2)}$ and are related to the maps $m_2$ and $\mu_{1,1}$. However, the homotopy groups of these topological spaces just partially compute the Hochschild cohomology, which does not seem to be completely satisfactory. The author believes that a suitable notion of stable $(\infty,2)$-category could provide a solution to this issue. Such notion, indeed, will provide a spectra of morphisms $Sp(Id_A,Id_A)$, whose homotopy groups will then compute the full Hochschild cohomology. The maps $m_2$ and $\mu$ should appear as truncations of maps of spectra
\medskip
\[
\begin{gathered}
m_2: Sp(Id_A,Id_A)\wedge Sp(Id_A,Id_A)\to Sp(Id_A,Id_A) \\
\mu: Sp(Id_A,Id_A)\wedge Sp(Id_A,Id_A)\to Sp(Id_A,Id_A)[1]
\end{gathered} 
\]
out of which the Gerstenhaber structure of the Hochschild cohomology appears by taking the associated maps in the homotopy groups. Those observations, restricted to the setting of dg-categories, can possibly provide an answer to the question "What do DG categories form?". This question was posed and discussed in \cite{Tam}.
\newpage

\appendix
\section{dg-categories and $\Ainf$-categories.}\label{APPA}

Let $\mathbb{K}$ be a field, that we assume from now on of characteristic $0$. The category $Vect_{\mathbb{Z}}(\mathbb{K})$ is the category whose objects are $\mathbb{Z}$-graded vector spaces over $\mathbb{K}$
\medskip
\[
V^{\bullet}=\bigoplus_{p\in \mathbb{Z}} V^p
\]
and morphisms are given by degree preserving $\mathbb{K}$-linear maps. This category has a closed symmetric monoidal structure, with monoidal functor given by tensor product of graded vector spaces. We refer to \cite{LH} for details about their definition.

Let $S$ be a set, the category of graded quivers on $S$, denoted by $Qu(S,Vect_{\mathbb{Z}}(\mathbb{K}))$, is the category whose objects are collections of graded vector spaces
\medskip
\[
\mathcal{Q}=\lbrace \mathcal{Q}(x,y) \rbrace_{x,y\in S}
\]
and morphisms $r:\mathcal{Q}\to \mathcal{R}$ are collections of maps of graded vector spaces
\medskip
\[
\lbrace r(x,y): \mathcal{Q}(x,y)\to \mathcal{R}(x,y) \rbrace_{x,y\in S}
\]
Given $\mathcal{Q}$ and $\mathcal{R}\in Qu(S,Vect_{\mathbb{Z}}(\mathbb{K}))$, their tensor product $\mathcal{Q}\otimes \mathcal{R}$ is the graded quiver on $S$
\medskip
\[
\mathcal{Q}\otimes \mathcal{R}(x,y)=\bigoplus_{z\in S}  \mathcal{Q}(x,z)\otimes \mathcal{R}(z,y)
\]
The quiver $\mathbb{K}\in Qu(S,Vect_{\mathbb{Z}}(\mathbb{K}))$ is the quiver given by $\mathbb{K}(x,y)=\mathbb{K}$ for $x=y\in S$ and $0$ otherwise.

A chain complex is a graded vector space together with a map of degree $+1$ which squares to $0$. They form a category, denoted by $Ch(\mathbb{K})$, where morphisms are morphisms of graded vector spaces compatible with differentials. As for $Vect_{\mathbb{Z}}(\mathbb{K})$, there is a symmetric monoidal structure on $Ch(\mathbb{K})$ \cite{LH}.

Let S be a set, the category of dg-quivers on $S$, denoted $Qu(S,Ch(\mathbb{K}))$ is the category whose objects are graded quivers endowed with a differential and morphisms are morphisms of graded quivers compatible with the differentials. 
Given $\mathcal{Q}, \mathcal{R}\in Qu(S,Qu(S,Ch(\mathbb{K})))$, their tensor product is the tensor product as graded quivers with differential $d_{\mathcal{Q}\otimes \mathcal{R}}=d_{\mathcal{Q}}\otimes Id_{\mathcal{R}}+Id_{\mathcal{Q}}\otimes d_{\mathcal{R}}$.

\medskip

\subsection{Differential graded categories.}
A dg-category $D$ over $\mathbb{K}$ is a category enriched over the monoidal category $Ch(\mathbb{K})$. It is given by a set of objects $Ob(D)$ and, for every pair of objects $x,y$, a chain complex $Hom^{\bullet}_{D}(x,y)$ with composition morphisms  
\medskip
\[
Hom^{\bullet}_{D}(y,z)\otimes Hom^{\bullet}_{D}(x,y)\to Hom^{\bullet}_{D}(x,z)
\]
which are associative and unital.

Given dg-categories $C,D$, a dg-functor $f:C\to D$ is a map of sets $f:Ob(C)\to Ob(D)$ and, for every pair of objects $x,y$, a map of chain complexes 
\medskip
\[
f_{x,y}: Hom^{\bullet}_{C}(x,y)\to Hom^{\bullet}_{D}(f(x),f(y))
\] 
compatible with the composition morphisms in the obvious way and preserving the identities. There is an obvious composition law for dg-functors that is associative and unital. We refer to $dgCat$ as the category whose objects are dg-categories and morphisms are dg-functors.

Given dg-categories $C,D$ and dg-functors $f,g:C\to D$, a morphism of dg-functors (or a natural transformation) between $f$ and $g$ is the data, for every $x\in Ob(C)$, of a morphism of graded vector spaces
\medskip
\[
r(x):\mathbb{K}\to Hom^{\bullet}_{D}(f(x),g(x))
\]
such that
\medskip
\[ 
\left \{
  \begin{tabular}{ccc}
  $d_D\circ r(x)=0$ \\
  \\
  $m_{f(x),g(x),g(y)}\circ(r(x)\otimes g_{x,y})=m_{f(x),f(y),g(y)}\circ (f_{x,y}\otimes r(y))$
  \end{tabular}
  \right.
\]
\medskip
\begin{remark}
As well known, dg-categories with a given set of objects $S$ are identified with unital dg-algebra objects in the monoidal category $Qu(S,Vect_{\mathbb{Z}}(\mathbb{K}))$. A non-unital dg-category over a set of objects $S=Ob(D)$ is a non-unital dg-algebra object in the monoidal category $Qu(S,Vect_{\mathbb{Z}}(\mathbb{K}))$. We denote by $dgCat_{nu}$ the category of non-unital dg-categories.
\end{remark}
\medskip
\begin{defi}
Given a dg-category $D\in dgCat$, its homotopy category $H^0(D)$ is the category with the same objects of $D$ and set of morphisms
\medskip
\[
Hom_{H^0(D)}(x,y)=H^0(Hom^{\bullet}_{D}(x,y))
\] 
composition law and identities are induced in cohomology by the one of $D$.
\end{defi}

\subsection{Tensor product and dg-category of dg-functors}
\begin{defi}
Given dg-categories $C,D$, their tensor product $C\otimes D$ is the dg-category whose objects 
\medskip
\[
Ob(C\otimes D)=Ob(C)\times Ob(D)
\]
and cochain complex of morphism given by
\medskip
\[
Hom^{\bullet}_{C\otimes D}((x_1,y_1),(x_2,y_2))=Hom^{\bullet}_{C}(x_1,x_2)\otimes Hom^{\bullet}_{D}(y_1,y_2)
\]
with differential
\medskip
\[
d_{C\otimes D}=d_C\otimes Id_D+Id_C\otimes d_D
\]
Composition law and identity are obviously defined.
\end{defi}
\medskip
\begin{defi}
Given dg-categories $C,D$, the dg-category of dg-functors $dgFun^{\bullet}(C,D)$, is the dg-category whose objects are dg-functors $f:C\to D$  and, give dg-functors $f$ and $g$ an element $r\in Hom^{d}_{dgFun^{\bullet}(C,D)}(f,g)$ is given by a sequences of morphisms of degree $d$
\medskip
\[
r(x):\mathbb{K}\to Hom^{\bullet}_{D}(f(x),g(x))
\]
for every $x\in Ob(C)$, such that
\medskip
\[
m_{f(x),g(x),g(y)}\circ(r(x)\otimes g_{x,y})=m_{f(x),f(y),g(y)}\circ (f_{x,y}\otimes r(y))
\]
The differential 
\medskip
\[
d:Hom^{d}_{dgFun^{\bullet}(C,D)}(f,g)\to Hom^{d+1}_{dgFun^{\bullet}(C,D)}(f,g)
\]
is given by the formula $d(r)(x)=d_D(r(x))$.
\end{defi}
\medskip
\begin{proposition}
The tensor product defines a symmetric monoidal structure on $dgCat$. This monoidal structure in closed. In particular there exists natural isomorphisms
\medskip
\[
Hom_{dgCat}(C\otimes D, E)\xrightarrow{\simeq} Hom_{dgCat}(C,dgFun^{\bullet}(D,E))
\]
for given dg-categories $C,D,E$.
\end{proposition}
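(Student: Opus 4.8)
The plan is to recognize this as the standard fact that for a bicomplete closed symmetric monoidal category $V$ --- here $V=Ch(\mathbb{K})$ --- the category of small $V$-enriched categories is itself closed symmetric monoidal, with internal hom given by the $V$-category of enriched functors. I would nevertheless carry out the construction by hand, since all the relevant data have already been written down explicitly. First I would record the symmetric monoidal structure: the unit is the dg-category $\mathbbm{1}$ with a single object $\ast$ and $Hom^{\bullet}_{\mathbbm{1}}(\ast,\ast)=\mathbb{K}$ concentrated in degree $0$, while the associator and the symmetry on $C\otimes D$ are induced hom-complex by hom-complex from the associativity and symmetry constraints of $\otimes$ on $Ch(\mathbb{K})$ (the latter carrying the Koszul sign on $Hom^{\bullet}_C(x_1,x_2)\otimes Hom^{\bullet}_D(y_1,y_2)$). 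Because these isomorphisms are the identity on objects and are defined complexwise, the pentagon, triangle and hexagon coherence diagrams for $dgCat$ reduce to the corresponding diagrams in $Ch(\mathbb{K})$, which hold by hypothesis. This settles the first two assertions.

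Next I would construct the currying bijection explicitly. Given a dg-functor $F\colon C\otimes D\to E$, define $\Phi(F)=G\colon C\to dgFun^{\bullet}(D,E)$ as follows: on objects $G(x)$ is the assignment $y\mapsto F(x,y)$ and $d\mapsto F_{(x,y_1),(x,y_2)}(Id_x\otimes d)$; and on a morphism $c\in Hom^{p}_{C}(x_1,x_2)$, $G(c)$ is the element of $Hom^{p}_{dgFun^{\bullet}(D,E)}(G(x_1),G(x_2))$ whose component at $y$ is $F_{(x_1,y),(x_2,y)}(c\otimes Id_y)$. The candidate inverse $\Psi$ sends $G$ to the dg-functor $F$ with $F(x,y)=G(x)(y)$ and
\[
F_{(x_1,y_1),(x_2,y_2)}(c\otimes d)=m_E\big(G(c)(y_2)\otimes G(x_1)(d)\big).
\]

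I would then verify, in order: (i) each $G(x)$ is a dg-functor, which is just $F$ restricted to $\{Id_x\}\otimes Hom^{\bullet}_D(-,-)$ together with $(Id_x\otimes d)(Id_x\otimes d')=Id_x\otimes(dd')$; (ii) each $G(c)$ satisfies the defining equation of a morphism in $dgFun^{\bullet}(D,E)$, which is exactly the interchange relation $(c\otimes Id_{y_2})(Id_{x_1}\otimes d)=(Id_{x_2}\otimes d)(c\otimes Id_{y_1})$ in $C\otimes D$ pushed through $F$; (iii) $G$ respects differential, composition and units, using that the differential on $C\otimes D$ is $d_C\otimes Id+Id\otimes d_D$ and that the differential on $dgFun^{\bullet}$ is postcomposition with $d_E$. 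The same naturality equation guarantees that the formula for $\Psi(G)$ is independent of routing through $(x_1,y_2)$ versus $(x_2,y_1)$, hence well defined, and a direct computation shows $\Phi$ and $\Psi$ are mutually inverse. Finally I would observe that both are manifestly natural in $C$, $D$ and $E$, which is what closedness means.

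The main obstacle is the Koszul-sign bookkeeping in step (ii) and in checking that $\Psi(G)$ respects composition in $C\otimes D$. The interchange law in the tensor product carries a sign, $(c_1\otimes d_1)(c_2\otimes d_2)=(-1)^{|d_1||c_2|}(c_1c_2)\otimes(d_1d_2)$, and one must confirm that it matches the sign appearing in the composition of morphisms in $dgFun^{\bullet}(D,E)$ and in the Leibniz rule for $d_C\otimes Id+Id\otimes d_D$; once the conventions are aligned the naturality equation drops out with the correct sign. Everything else is formal, and at the abstract level the entire statement is subsumed by the general theory of enriched functor categories.
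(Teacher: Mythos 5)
Your proof is correct, but there is nothing in the paper to compare it against: the proposition appears in Appendix A as part of the paper's recollection of standard background on dg-categories, and no proof is given there at all --- the author implicitly defers to the standard enriched-category-theory fact (for a bicomplete closed symmetric monoidal $V$, here $V=Ch(\mathbb{K})$, the category of small $V$-categories is closed symmetric monoidal with internal hom the $V$-category of $V$-functors), which is exactly the observation you open with. Your explicit currying construction $\Phi$, the inverse $\Psi$ defined by $F(c\otimes d)=m_E\bigl(G(c)(y_2)\otimes G(x_1)(d)\bigr)$, and the verification steps (i)--(iii) are the standard proof of that fact, and you correctly isolate the one genuinely delicate point: the interchange law in $C\otimes D$ holds only up to the Koszul sign $(-1)^{|d||c|}$, and this sign is precisely the one built into the paper's map-level definition of $Hom^{\bullet}_{dgFun^{\bullet}(D,E)}(f,g)$ (evaluating $m\circ(r(x)\otimes g_{x,y})=m\circ(f_{x,y}\otimes r(y))$ on elements produces $g(c)\circ r_x=(-1)^{|r||c|}\,r_y\circ f(c)$), so the two conventions do match and your step (ii) and the well-definedness of $\Psi$ go through. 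In short, you have supplied in full the argument the paper takes as known; the only thing the paper's treatment ``buys'' is brevity by citation, while yours makes the sign bookkeeping explicit, which is worthwhile since that is where such verifications usually go wrong.
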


\subsection{$\mathcal{A}_{\infty}$-categories and differential graded cocategories}
\begin{defi}
A (unital) $\Ainf$-category $A$ over $\mathbb{K}$ is the data of a set objects $Ob(A)$, a graded quiver $\lbrace Hom_{A}^{\bullet}(x,y)\rbrace_{x,y\in Ob(A)} $, and graded morphisms of degree $2-k$
\medskip
\[
m_k: Hom^{\bullet}_{A}(x_{k-1}, x_k)\otimes \dots \otimes Hom^{\bullet}_{A}(x_0, x_1)\to Hom^{\bullet}_{A}(x_0, x_k)
\]
$k\ge 1$, satisfying the system of equations
\medskip
\begin{equation}
\sum_{n=i+j+k} (-1)^{ik+j} m_{i+j+1}(Id^{\otimes^i}\otimes m_k \otimes Id^{\otimes^j})=0
\end{equation}
for $n\ge 1$. Moreover, it comes equipped with a map of degree $0$ 
\medskip
\[
\varepsilon_{x}: \mathbb{K}\to Hom^{\bullet}_{A}(x, x)
\]
such that
\[
\begin{gathered}
m_1(\varepsilon)=0 \\
m_2(\varepsilon \otimes Id)=m_2(Id \otimes \varepsilon)=Id \\
m_k(Id^{\otimes i}\otimes \varepsilon \otimes Id^{\otimes k-i-1})=0
\end{gathered} 
\]
for $0\le i\le k-1$.
\end{defi}
\medskip
\begin{defi}
Given $\Ainf$-categories $A,B$, a unital $\Ainf$-functor $f:A\to B$ \item is the data of map of sets $f:Ob(\mathcal{A})\to Ob(\mathcal{B})$, graded maps of degree $1-i$
\medskip
\[
f_i: Hom^{\bullet}_{A}(x_{i-1}, x_i)\otimes \dots \otimes Hom^{\bullet}_{A}(x_0, x_1)\to Hom^{\bullet}_{B}(f(x_0),f(x_i))
\]
$n\ge 1$, satisfying the system of equations
\medskip
\[
\sum_{n=r+t+s} (-1)^{sr+t} f_{r+t+1}(Id^{\otimes^r}\otimes m_s \otimes Id^{\otimes^t})=\sum_{\substack{1\le r\le n\\ i_1+\dots +i_r=n}} (-1)^{\epsilon_r} m'_{r}(f_{i_1}\otimes \dots \otimes f_{i_r})
\]
where
\medskip
\[
\epsilon_r=\epsilon_r(i_1,\dots ,i_r)=\sum_{2\le k\le r}\left( (1-i_k)\sum_{1\le l\le k-1} i_l \right)
\] 
and 
\medskip
\[
\begin{gathered}
f_1(1_x)=1_{f(x)} \\
f_n(a_1\otimes \dots \otimes a_{j-1}\otimes 1_x\otimes a_{j+1}\otimes \dots a_n)=0
\end{gathered} 
\]
for $n>1$, $1< j< n$.
\end{defi}
\medskip
\begin{remark}
As in the case of dg-categories, unital $\Ainf$-categories with a set of objects $S$ are identified with unital $\Ainf$-algebra objects in $Qu(S,Vect_{\mathbb{Z}}(\mathbb{K}))$. A non-unital $\Ainf$-category over a set of objects $S=Ob(A)$ is a non-unital $\Ainf$-algebra object in $Qu(S,Vect_{\mathbb{Z}}(\mathbb{K}))$. We denote by $\Ainf Cat_{nu}$ the category of non-unital $\Ainf$-categories.
\end{remark}
\medskip
\begin{defi}
A differential graded cocategory is given by a a set of objects $Ob(C)$, a graded quiver over it $Hom^{\bullet}_{C}=\lbrace Hom^{\bullet}_{C}(x,y) \rbrace_{x,y\in Ob(C)}$ together with a map of graded quivers of degree $+1$
\medskip
\[
b:C\to C
\]
and a map of graded quivers of degree $0$
\medskip
\[
\Delta:C\to C\otimes C 
\]
such that
\medskip
\[
(b\otimes Id_C+Id_C\otimes b)\circ \Delta= \Delta\circ b
\]
A graded cocategory is counital if endowed with a degree $0$ morphisms of graded quivers
\medskip
\[
\eta:C\to \mathbb{K}
\]
such that
\medskip
\[
(Id_C\otimes \eta) \circ \Delta = (\eta\otimes Id_C)\circ \Delta= Id_C
\]
\end{defi}
\medskip
\begin{defi}
Given an graded quiver $V$ over a set $S$, its bar construction is the graded quiver
\medskip
\[
B(V)=\bigoplus_{n\ge 1} (V[1])^{\otimes n} 
\]
together with the degree $0$ morphism
\medskip
\[
\Delta: B(V)\to B(V)\otimes B(V) 
\]
given by separation of tensors
\medskip
\[
\Delta(v_1\otimes \cdots \otimes v_n)=\sum_{1\le i \le n} (v_1\otimes \cdots \otimes v_i)\otimes (v_{i-1}\otimes \cdots \otimes v_n)
\]
which defines on $B(V)$ a structure of a graded coalgebra (non-counital) object in $Qu(S,Vect_{\mathbb{Z}}(\mathbb{K}))$.
\end{defi}
\medskip
\begin{proposition}
Given a graded quiver $A$ on a set $Ob(A)$, there exists a bijection between (non-unital) $\Ainf$-category structures on $A$ and differentials $b$ on $B(A)$ making $(B(A),b,\Delta)$ a differential graded cocategory. Moreover, the bar construction extends to a functor
\medskip
\[
B:\Ainf Cat_{nu}\to dgCoCat
\]
whose essential image is given by dg-cocategories which are cocomplete \cite{LH}.
\end{proposition}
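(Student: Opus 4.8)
The plan is to exploit the fact that the bar construction $B(A)=\bigoplus_{n\ge 1}(A[1])^{\otimes n}$ is the cofree conilpotent graded coalgebra cogenerated by the desuspended quiver $A[1]$, so that all the structure is governed by the universal property of cofreeness. Write $p:B(A)\to A[1]$ for the projection onto the $n=1$ summand. The basic fact I would first record is that every coderivation $b:B(A)\to B(A)$ with respect to $\Delta$ (that is, every degree $+1$ map satisfying $(b\otimes Id+Id\otimes b)\circ\Delta=\Delta\circ b$) is uniquely determined by its corestriction $p\circ b$, and that conversely every degree $+1$ map $B(A)\to A[1]$ extends uniquely to such a coderivation. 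Decomposing the corestriction by tensor length yields a sequence of maps $b_k:(A[1])^{\otimes k}\to A[1]$ of degree $+1$, for $k\ge 1$.

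For the bijection itself I would transport the $b_k$ through the suspension isomorphism $s:A\to A[1]$ to obtain maps $m_k:A^{\otimes k}\to A$, and check by counting the degree shifts introduced by $s$ that $m_k$ has degree $2-k$. This identifies sequences $(b_k)$ with sequences $(m_k)$, hence coderivations on $B(A)$ with families of candidate $\Ainf$-operations. It then remains to match the differential condition. Since $b^2$ is again a coderivation, $b^2=0$ holds if and only if its corestriction $p\circ b^2$ vanishes; computing $p\circ b^2$ on $(A[1])^{\otimes n}$ produces exactly the sum $\sum b_{i+j+1}(Id^{\otimes i}\otimes b_k\otimes Id^{\otimes j})$, which under the suspension dictionary becomes the defining relation $\sum_{n=i+j+k}(-1)^{ik+j}m_{i+j+1}(Id^{\otimes i}\otimes m_k\otimes Id^{\otimes j})=0$. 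This establishes the claimed bijection between non-unital $\Ainf$-structures on $A$ and codifferentials on $B(A)$.

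For functoriality I would invoke the dual universal property: a morphism of graded cocategories $F:B(A)\to B(B)$ is uniquely determined by its corestriction $p\circ F:B(A)\to B[1]$, whose components $f_k:(A[1])^{\otimes k}\to B[1]$ correspond, again after (de)suspension, to the Taylor coefficients of a candidate $\Ainf$-functor. The compatibility $F\circ b_A=b_B\circ F$ with the codifferentials, corestricted to $B[1]$, unwinds into precisely the system of equations defining an $\Ainf$-functor $f:A\to B$; thus $\Ainf$-functors are identified with morphisms in $dgCoCat$. Functoriality of $B$ then follows formally: one checks that the corestriction of a composite $B(g)\circ B(f)$ reproduces the composition formula for $\Ainf$-functors, and uniqueness in the universal property forces $B(g\circ f)=B(g)\circ B(f)$, while preservation of identities is immediate.

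Finally, for the essential image I would observe that $B(A)$ is cocomplete because the reduced coproduct is locally nilpotent on the tensor coalgebra, so every dg-cocategory in the image is cocomplete. Conversely, given a cocomplete dg-cocategory $C$, its space of primitives forms a graded quiver $W$; setting $A=W[-1]$, the conilpotent filtration identifies the underlying graded cocategory of $C$ with the cofree one $B(A)$, and transporting the differential $b$ along this isomorphism endows $A$ with an $\Ainf$-structure for which $C\cong B(A)$, as detailed in \cite{LH}. The main obstacle throughout is the sign bookkeeping in the suspension dictionary: one must verify that the Koszul signs produced by commuting $s$ past tensor factors reproduce exactly the signs $(-1)^{ik+j}$ in the $\Ainf$-relations and the signs $(-1)^{\epsilon_r}$ in the $\Ainf$-functor equations, and that these remain compatible with the deconcatenation coproduct.
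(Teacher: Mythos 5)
Your first three paragraphs are correct, and they are essentially the argument of Lef\`evre-Hasegawa that the paper is invoking: the paper itself gives no proof of this proposition (it is stated in the appendix as a recollection, with a citation to \cite{LH}), and the mechanism you describe --- coderivations on the cofree conilpotent cocategory determined by their corestriction $p\circ b$, the relation $b^2=0$ detected on corestrictions because $b^2$ is again a coderivation, morphisms into a cofree cocategory determined by their corestriction, degree bookkeeping through the suspension --- is exactly how that reference establishes the bijection with $\Ainf$-structures and the identification of $\Ainf$-functors with dg-cocategory morphisms.

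Your last paragraph, however, contains a genuine error. It is not true that for an arbitrary cocomplete dg-cocategory $C$ ``the conilpotent filtration identifies the underlying graded cocategory of $C$ with the cofree one $B(A)$'' on the desuspended primitives. A linear splitting of the primitives induces a canonical cocategory morphism from $C$ to the cofree cocategory on its primitives, but this map is only injective, not an isomorphism in general. Concretely, let $C$ have a single object with one-dimensional morphism space $\mathbb{K}\cdot x$, zero cocomposition and zero differential: this is cocomplete (conilpotent trivially), yet $B(A)=\bigoplus_{n\ge 1}(A[1])^{\otimes n}$ is infinite-dimensional whenever $A\neq 0$, since tensor powers of a nonzero space over a field never vanish; hence $C$ is not isomorphic to any $B(A)$ and lies outside the essential image. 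The correct characterization --- and what \cite{LH} actually proves --- is that the essential image consists of those cocomplete dg-cocategories whose \emph{underlying graded cocategory is cofree}, i.e. isomorphic as a graded cocategory to a reduced tensor cocategory; for such $C$ one transports the codifferential along the isomorphism and obtains $C\cong B(A)$ exactly as you intend. In other words, your argument breaks precisely at the point where the proposition's own wording is too loose: ``essential image $=$ all cocomplete dg-cocategories'' is false as literally stated, so no proof of it can close this step, and the cofreeness condition on the underlying graded cocategory must be reinstated (the paper sidesteps this by deferring to the citation rather than proving the claim).
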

\medskip
\begin{defi}
Given a graded quiver $V$ over a set $S$, its cobar construction is the graded quiver
\medskip
\[
\Omega(V)\bigoplus_{n> 1} (V[-1])^{\otimes n} 
\]
together with the degree $0$ morphism
\medskip
\[
\mu: \Omega(V)\otimes \Omega(V)\to \Omega(V)
\]
given by tensor multiplication
\medskip
\[
\mu((v_1\otimes \cdots \otimes v_i)\otimes (v_{i-1}\otimes \cdots \otimes v_n))=(v_1\otimes \cdots \otimes v_i\otimes v_{i-1}\otimes \cdots \otimes v_n)
\]
which defines on $ \Omega(V)$ a structure of graded algebra object in $Qu(S,Vect_{\mathbb{Z}}(\mathbb{K}))$.
\end{defi}
\medskip
\begin{proposition}
The cobar construction extends to a functor
\medskip
\[
\Omega: dgCoCat_{nco}\to dgCat_{nu}
\]
which is the left adjoint of the restriction of the bar construction to $dgCat_{nu}$.
\end{proposition}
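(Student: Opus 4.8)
The plan is to factor both Hom-sets through the set of \emph{twisting cochains} from $C$ to $D$ and to deduce the adjunction by composing the two resulting bijections. As a preliminary, I would first promote the graded category $\Omega(C)$ of the preceding definition to a genuine object of $dgCat_{nu}$. Since $\Omega(C)=\bigoplus_{n\ge 1}(C[-1])^{\otimes n}$ is free as a graded category on the quiver $C[-1]$, there is a unique derivation $b_{\Omega}$ of $\Omega(C)$ whose value on the generators $C[-1]$ is the sum of the desuspended internal codifferential $b_{C}$ and the quadratic term $C[-1]\to (C[-1])^{\otimes 2}$ induced by the comultiplication $\Delta_{C}$. The relation $b_{\Omega}^{2}=0$ reduces, on generators, to $b_{C}^{2}=0$, the coassociativity of $\Delta_{C}$, and the co-Leibniz identity $(b_{C}\otimes Id+Id\otimes b_{C})\circ\Delta_{C}=\Delta_{C}\circ b_{C}$, all of which hold by definition of a dg-cocategory. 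On a morphism $\phi\colon C\to C'$ I would set $\Omega(\phi)$ to be the multiplicative extension of $\phi[-1]\colon C[-1]\to C'[-1]$; compatibility with the cobar differentials is exactly the statement that $\phi$ intertwines $b$ and $\Delta$, and functoriality is automatic from freeness.

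Next I would introduce, for $C\in dgCoCat_{nco}$ and $D\in dgCat_{nu}$, the set $Tw(C,D)$ of degree $+1$ morphisms of graded quivers $\tau\colon C\to D$ satisfying the Maurer--Cartan equation
\[
b_{D}\circ\tau+\tau\circ b_{C}+m_{2}\circ(\tau\otimes\tau)\circ\Delta_{C}=0,
\]
where $m_{2}$ is the composition of $D$. Because $\Omega(C)$ is free on $C[-1]$, a morphism $f\colon\Omega(C)\to D$ in $dgCat_{nu}$ is determined by its restriction to the generators, i.e.\ by a degree $0$ map $C[-1]\to D$, equivalently a degree $+1$ map $\tau_{f}\colon C\to D$; unwinding the condition that $f$ commutes with $b_{\Omega}$ and $b_{D}$ on generators yields precisely the Maurer--Cartan equation above. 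This gives a bijection $Hom_{dgCat_{nu}}(\Omega(C),D)\cong Tw(C,D)$, natural in both variables.

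Dually, since $B(D)=\bigoplus_{n\ge 1}(D[1])^{\otimes n}$ is the cofree graded cocategory on $D[1]$ among cocomplete cocategories, a morphism $g\colon C\to B(D)$ of dg-cocategories is determined by its corestriction $p_{1}\circ g\colon C\to D[1]$ to the cogenerators, the whole map being recovered by the formula $g=\sum_{n}(p_{1}g)^{\otimes n}\circ\Delta_{C}^{(n-1)}$, which is a locally finite sum by cocompleteness of $C$. Setting $\tau_{g}\colon C\to D$ to be $p_{1}\circ g$ desuspended, the requirement that $g$ commute with the codifferentials again translates into the same Maurer--Cartan equation, giving $Hom_{dgCoCat_{nco}}(C,B(D))\cong Tw(C,D)$, natural in both variables. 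Composing the two bijections produces the natural isomorphism
\[
Hom_{dgCat_{nu}}(\Omega(C),D)\cong Hom_{dgCoCat_{nco}}(C,B(D)),
\]
which is exactly the asserted adjunction $\Omega\dashv B$.

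The main obstacle I anticipate is essentially bookkeeping: one must fix a single sign convention for the suspension and desuspension isomorphisms $C\rightleftarrows C[-1]$ and $D\rightleftarrows D[1]$ so that the differential-compatibility condition on the free side and on the cofree side translate into the \emph{same} Maurer--Cartan equation, with no residual sign discrepancy between the two computations. The second point requiring care is the systematic use of cocompleteness of $C$, the hypothesis encoded in $dgCoCat_{nco}$: it is precisely what guarantees that the reconstruction formula for $g$ is a well-defined locally finite sum, so that the cofreeness statement invoked above genuinely holds; without this hypothesis the bijection $Hom_{dgCoCat_{nco}}(C,B(D))\cong Tw(C,D)$ can fail.
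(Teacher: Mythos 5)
Your proof is correct. The paper itself gives no argument for this proposition --- it is stated in the appendix with the burden deferred to the cited reference of Lef\`evre-Hasegawa --- and the twisting-cochain argument you give (freeness of $\Omega(C)$ on $C[-1]$ identifying $Hom_{dgCat_{nu}}(\Omega(C),D)$ with Maurer--Cartan elements, cofreeness of $B(D)$ among cocomplete cocategories identifying $Hom_{dgCoCat_{nco}}(C,B(D))$ with the same set) is precisely the standard proof found there. Your closing observation is also the right one: the bijection on the cocategory side genuinely requires cocompleteness of $C$, so that the reconstruction series $\sum_{n}(p_{1}g)^{\otimes n}\circ\Delta_{C}^{(n-1)}$ is locally finite; this matches the paper's implicit convention, since it records that the essential image of the bar construction consists of cocomplete dg-cocategories.
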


\subsection{Augmentation, reduction and enveloping dg-category.}		

\begin{defi}
Given a (unital) dg-category $D$, its reduction is the non-unital dg-category 
\medskip
\[
\overline{D}=coKer(\varepsilon_D: \mathbb{K}\to D)
\]
where the cokernel is taken in the category of graded quivers.
\end{defi}
\medskip
\begin{defi}
Given a non-unital dg-category $E$, its augmentation is the unital dg-category
\medskip
\[
E^+=E\oplus \mathbb{K} 
\] 
with the unique dg-category structure making the inclusion $\mathbb{K}\to E\oplus \mathbb{K}$ the unit.
\end{defi}
\medskip
\begin{defi}
Given a (unital) $\Ainf$-category $A$ its reduction is the non-unital $\Ainf$-category 
\medskip
\[
\overline{A}=coKer(\varepsilon_A: \mathbb{K}\to A)
\]
where the cokernel is taken in the category of graded quivers.
\end{defi}
\medskip
\begin{defi}
Given a non-unital $\Ainf$-category $F$, its augmentation is the unital $\Ainf$-category
\medskip
\[
F^+=F\oplus \mathbb{K} 
\] 
with the unique $\Ainf$-category structure making the inclusion $\mathbb{K}\to F\oplus \mathbb{K}$ the unit.
\end{defi}
\medskip
\begin{defi}
Given a (counital) dg-cocategory $C$ its reduction is the non-counital dg-category 
\medskip
\[
\overline{C}=Ker(\eta_C: \mathbb{K}\to C)
\]
where the kernel is taken in the category of graded quivers.
\end{defi}
\medskip
\begin{defi}
Given a non-counital dg-category $B$, its augmentation is the counital dg-cocategory
\medskip
\[
B^+=B\oplus \mathbb{K} 
\] 
with the unique dg-cocategory structure making the projection $B\oplus \mathbb{K}\to \mathbb{K}$ the counit.
\end{defi}
\medskip
\begin{lemma}
Given dg-categories $C,D$, there exists natural bijections
\medskip
\[
Hom_{dgCat}(C,D)\simeq Hom_{dgCat_{nu}}(\overline{C},\overline{D})
\]
and for non-unital dg-categories $C',D'$ there exists natural bijections
\medskip
\[
Hom_{dgCat_{nu}}(C',D')\simeq Hom_{dgCat}((C')^+,(D')^+)
\]
\end{lemma}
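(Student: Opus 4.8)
The plan is to show that the two displayed bijections are two faces of a single fact: reduction $\overline{(-)}$ and augmentation $(-)^+$ are mutually inverse functors, so that they realize an isomorphism of categories $dgCat \cong dgCat_{nu}$. The starting point is the canonical decomposition of the underlying graded quiver. For a unital dg-category $D$ on object set $S$, the unit $\varepsilon_D:\KK\to D$ (with $\KK$ the unit quiver) is a split monomorphism of graded quivers, so $\overline{D}=\coker(\varepsilon_D)$ is identified with a complement and $D\cong \KK\oplus\overline{D}$ as graded quivers; dually, for non-unital $E$ one has $E^+=E\oplus\KK$ by definition. I will use these decompositions to translate the data of a functor on one side into the data of a functor on the other, reading off the bijections and their naturality directly from the construction.

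\textbf{First bijection.} Given a unital dg-functor $f:C\to D$, unitality gives $f(1_x)=1_{f(x)}$, so $f$ carries the unit quiver of $C$ into that of $D$ and therefore descends to cokernels, yielding a non-unital dg-functor $\overline{f}:\overline{C}\to\overline{D}$. Conversely, from a non-unital $g:\overline{C}\to\overline{D}$ I build a unital functor $g^+:C\to D$ by declaring it to agree with $g$ on the summand $\overline{C}$ and to send $1_x$ to $1_{g(x)}$, using $C\cong\KK\oplus\overline{C}$. The two assignments $f\mapsto\overline{f}$ and $g\mapsto g^+$ are then visibly inverse.

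\textbf{Second bijection.} This is the dual construction through $E^+=E\oplus\KK$. A non-unital $g:C'\to D'$ is unitalized to $g\oplus \mathrm{id}_{\KK}:(C')^+\to (D')^+$, which is a unital dg-functor, while a unital $h:(C')^+\to (D')^+$ preserves units and is determined by its restriction to the augmentation ideal $C'$; this restriction is a non-unital dg-functor $C'\to D'$, and again the two assignments are inverse to one another.

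\textbf{Main obstacle.} The point requiring care in each case is that these correspondences preserve the dg-functor axioms. Compatibility with the differential is immediate, since $1_x$ is a cycle of degree $0$ and the decomposition $D\cong\KK\oplus\overline{D}$ is by homogeneous summands, so the differential never mixes the unit line with $\overline{D}$. Compatibility with composition is the delicate step: one decomposes the composition law of $D$ along $\KK\oplus\overline{D}$ and checks that the only terms involving the unit summand are the trivial compositions $1\cdot v=v=v\cdot 1$, which the prescription $1_x\mapsto 1_{f(x)}$ accounts for automatically, the remaining terms being governed exactly by the reduced functor. Dually, for the second bijection the crux is that a unital $h$ sends the augmentation ideal into the augmentation ideal, so that the restriction is well defined; this is precisely where one combines unitality of $h$ with the explicit multiplication on $E^+=E\oplus\KK$. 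Once these two compatibilities are recorded, both bijections and their naturality in the entries follow.
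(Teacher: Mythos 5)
You cannot be faulted for not matching the paper's proof, because the paper gives none: this lemma is stated in Appendix \ref{APPA} without argument, and the preamble of Appendix \ref{APPB} quietly signals the relevant caveat, namely that the results it imports from [LH] are formulated for \emph{augmented} dg-categories. Judged on its own terms, your proof has a genuine gap, and in fact the statement in the generality in which you (and the paper) assert it is false, so the gap cannot be closed without adding hypotheses. The delicate step you yourself flag --- compatibility of $g^+$ with composition --- is exactly where it breaks: for $a,b\in\overline{C}$ the composite in $C$ decomposes as $ab=\varepsilon(ab)1+p(ab)$ along $C\cong\mathbb{K}\oplus\overline{C}$, and nothing in the data of a non-unital functor $g$ controls the unit component $\varepsilon(ab)$. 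Concretely, take $C=D=\mathbb{K}[x]/(x^2-1)$ (one object, degree $0$, zero differential) with complement $\overline{C}=\mathbb{K}\cdot x$, whose transferred product is zero since $p(x\cdot x)=p(1)=0$; then $g(x)=2x$ is a non-unital endomorphism, but $g^+(x\cdot x)=g^+(1)=1\neq 4=g^+(x)\,g^+(x)$, so $g^+$ is not a dg-functor. Dually, $f\mapsto\overline{f}$ is not injective: the unital dg-functors $\mathbb{K}[x]\to\mathbb{K}$, $x\mapsto\lambda$, all induce the zero map on reductions. The same example defeats your second bijection: a unital functor $h:(C')^+\to(D')^+$ need \emph{not} send $C'$ into $D'$. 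With $C'=x\mathbb{K}[x]$, so that $(C')^+\cong\mathbb{K}[x]$ via $(a,\lambda)\mapsto a+\lambda$, and $D'=0$, the functor $x\mapsto\lambda\cdot 1$ has image outside $D'$ for $\lambda\neq 0$; hence ``restriction to the augmentation ideal'' is undefined, and $Hom_{dgCat}((C')^+,(D')^+)$ is strictly larger than $Hom_{dgCat_{nu}}(C',D')$. In particular your headline claim that $\overline{(-)}$ and $(-)^+$ are mutually inverse isomorphisms $dgCat\cong dgCat_{nu}$ is false: $\overline{(C')^+}\cong C'$ does hold, but $(\overline{D})^+\not\cong D$ in general, as the hom-set counts above already show.

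There is also a more basic gap upstream of both bijections: composition does not descend to $\coker(\varepsilon_D)$ at all, because $\mathbb{K}\cdot 1$ is not an ideal ($1\cdot a=a$), and after choosing a splitting $D\cong\mathbb{K}\oplus\overline{D}$ the transferred product $p\circ m_2$ is in general not even associative (try $\mathbb{K}[x]/(x^3-1)$ with complement spanned by $x,x^2$). So the non-unital structure on $\overline{D}$ that the lemma presupposes is itself not available from the cokernel alone. All of these problems vanish simultaneously in the setting the lemma actually comes from: augmented dg-categories and augmentation-preserving unital functors. There $\overline{D}\cong\ker(\varepsilon_D)$ is a two-sided ideal, hence an honest non-unital dg-category; augmented functors preserve augmentation ideals, so both restrictions are defined; and your two constructions are then genuinely mutually inverse, since $\varepsilon(ab)=0$ for $a,b$ in the ideal --- precisely the identity whose failure broke $g^+$ above. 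This augmented version is the statement in [LH], and it covers every use made in the paper (the constructions $U(A)=(\Omega B(\overline{A}))^+$ and the bar/cobar adjunctions, where all objects in sight are canonically augmented and all functors constructed preserve the augmentations). In short: your strategy is the right and standard one, but you must state and use the augmentation hypotheses; as written, the key multiplicativity check fails and the literal statement admits counterexamples.
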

\medskip
\begin{lemma}
Given $\Ainf$-categories $A,B$, there exists natural bijections
\medskip
\[
Hom_{\Ainf Cat}(A,B)\simeq Hom_{\Ainf Cat_{nu}}(\overline{A},\overline{B})
\]
and for non-unital $\Ainf$-categories $A',B'$ there exists natural bijections
\medskip
\[
Hom_{\Ainf Cat_{nu}}(A',B')\simeq Hom_{\Ainf Cat}((A')^+,(B')^+)
\]
\end{lemma}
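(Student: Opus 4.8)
The plan is to prove both bijections at once by exploiting the fact that reduction and augmentation are mutually inverse constructions, once one notices that for a strictly unital $\Ainf$-category the whole structure is encoded by its reduction. First I would record the splitting of underlying graded quivers $A=\overline{A}\oplus \mathbb{K}$, where $\mathbb{K}$ is the sub-quiver spanned by the units $\varepsilon_x$. The unit axioms $m_1(\varepsilon)=0$, $m_2(\varepsilon\otimes \id)=m_2(\id\otimes \varepsilon)=\id$ and $m_k(\id^{\otimes i}\otimes \varepsilon\otimes \id^{\otimes k-i-1})=0$ show that every operation $m_k$ having a unit among its inputs is forced: it either vanishes or reproduces an identity. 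Consequently the operations of $A$ are equivalent data to those obtained by restricting to $\overline{A}^{\otimes k}$ and projecting to $\overline{A}$, which is precisely the non-unital $\Ainf$-structure on $\overline{A}$. This yields the object-level identity $\overline{(A')^+}=A'$ for non-unital $A'$ and a canonical isomorphism $(\overline{A})^+\cong A$ for unital $A$, so that reduction and augmentation invert one another.

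For the first bijection I would define the forward map by reduction of functors: given a unital $\Ainf$-functor $f\colon A\to B$, the unit axioms $f_1(1_x)=1_{f(x)}$ and $f_n(\dots\otimes 1_x\otimes \dots)=0$ for $n>1$ show that $f$ is completely determined by the restrictions of its components to $\overline{A}^{\otimes n}$, and that these restrictions assemble into a non-unital $\Ainf$-functor $\overline{f}\colon \overline{A}\to \overline{B}$. The inverse map extends a non-unital $\Ainf$-functor $g\colon \overline{A}\to \overline{B}$ to a map $A\to B$ by imposing the same unit rules, namely sending $\varepsilon_x$ to $\varepsilon_{g(x)}$ in degree one and declaring every higher component with a unit input to vanish. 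These two assignments are visibly inverse to one another, so it remains only to check that each output is a morphism in the correct category.

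The main obstacle, and the only genuine computation, is verifying that the $\Ainf$-functor relation is preserved under both operations. One has to evaluate the defining equation $\sum (-1)^{sr+t} f_{r+t+1}(\id^{\otimes r}\otimes m_s\otimes \id^{\otimes t})=\sum (-1)^{\epsilon_r} m'_r(f_{i_1}\otimes \dots \otimes f_{i_r})$ on tensors containing one or more unit inputs and confirm that, after applying the unit axioms of $A$ and $B$ (which collapse the $m_2$-with-unit and $f_1$-on-unit terms to identities and annihilate all remaining unit terms), it reduces term by term to the corresponding relation already satisfied on reduced inputs. Thus unit inputs impose no new constraint, and the restricted (respectively extended) data is an $\Ainf$-functor exactly when the reduced (respectively original) data is. This is routine but requires the careful sign bookkeeping of the conventions of \cite{Sei}, \cite{LH}; it is formally identical to the verification in the preceding lemma for dg-categories, where only $m_1$ and $m_2$ enter.

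Finally, the second bijection is a formal consequence of the first. Applying the isomorphism $Hom_{\Ainf Cat}(A,B)\simeq Hom_{\Ainf Cat_{nu}}(\overline{A},\overline{B})$ to the augmentations $A=(A')^+$ and $B=(B')^+$ and using $\overline{(A')^+}=A'$, $\overline{(B')^+}=B'$ from the first step yields the desired bijection $Hom_{\Ainf Cat_{nu}}(A',B')\simeq Hom_{\Ainf Cat}((A')^+,(B')^+)$. Naturality in both variables, in each case, is immediate from the explicit component-wise description of the maps.
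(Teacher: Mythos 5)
The paper itself offers no proof of this lemma --- it is stated in Appendix A as a standard fact --- so your attempt has to stand on its own, and it has a genuine gap, located at the very first step on which everything else rests. You claim that for a strictly unital $A$ the unit axioms make the operations of $A$ "equivalent data" to their restrictions to $\overline{A}^{\otimes k}$ projected to $\overline{A}$, hence that $(\overline{A})^+\cong A$ and that restriction/extension of functors are mutually inverse. This is false. First, $\overline{A}$ is defined as a cokernel, so restricting $m_k$ to $\overline{A}^{\otimes k}$ requires choosing a splitting $A\cong\overline{A}\oplus\mathbb{K}$, and the resulting operations depend on that choice. Second, and fatally, the unit axioms constrain $m_k$ only when a unit occurs among its \emph{inputs}; they do not prevent $m_k$, evaluated on the chosen complement, from having a nonzero component \emph{along} the units, and your projection throws that component away, so $A$ cannot be recovered from the projected data. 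Concretely, take $A$ to be the dg-category with one object and endomorphism algebra $\mathbb{K}\times\mathbb{K}$, and $B$ the one with endomorphism algebra $\mathbb{K}$ (everything in degree $0$, so unital $\Ainf$-functors are exactly unital algebra maps). With complement spanned by $(1,-1)$ one has $(1,-1)^2=(1,1)$, a multiple of the unit, so the induced "reduced" product is zero and $(\overline{A})^+$ is the dual numbers $\mathbb{K}[y]/(y^2)\not\cong\mathbb{K}\times\mathbb{K}$; with complement spanned by $(1,0)$ one gets an idempotent instead. Moreover the asserted bijection itself fails for this pair: $Hom_{\Ainf Cat}(A,B)$ has two elements (the two projections), whereas $\overline{B}=0$, so $Hom_{\Ainf Cat_{nu}}(\overline{A},\overline{B})$ is a single point for any structure on $\overline{A}$. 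The same example, rewritten as $A=(A')^+$ with $A'$ the idempotent line and $B'=0$, also refutes the second bijection. Accordingly, the functor-equation check you defer as routine cannot go through: on reduced inputs, the discarded unit components of $m^A_k$ hit $f_1(1_x)=1_{f(x)}\neq 0$ and produce terms with no counterpart in the non-unital equations.

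What is true, and what \cite{LH} actually proves, is the \emph{augmented} version: if $A$ and $B$ carry strict augmentations, then $\overline{A}=\ker(\varepsilon)$ is canonically a non-unital $\Ainf$-category (the operations preserve it by the definition of augmented), $(\overline{A})^+\cong A$ holds, and reduction/augmentation give inverse bijections between \emph{augmentation-preserving} unital functors and non-unital functors; likewise the second bijection holds once the right-hand side is restricted to augmentation-preserving functors $(A')^+\to(B')^+$. Your argument is essentially correct in that setting: the splitting becomes canonical, the unit components you need to vanish do vanish, and your deduction of the second bijection from the first then works verbatim. So the repair is to install the augmentation hypotheses --- this is exactly the discrepancy the paper itself gestures at in the opening remark of Appendix B, where it concedes that the results of \cite{LH} are stated for augmented dg-categories --- rather than to refine the sign bookkeeping, which is not where the problem lies.
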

\medskip
\begin{lemma}
Given dg-cocategories $E,F$, there exists natural bijections
\medskip
\[
Hom_{dgCoCat}(E,F)\simeq Hom_{dgCoCat_{ncu}}(\overline{E},\overline{F})
\]
and for non-counital dg-cocategories $E',F'$ there exists natural bijections
\medskip
\[
Hom_{dgCoCat_{ncu}}(E',F')\simeq Hom_{dgCoCat}((E')^+,(F')^+)
\]
\end{lemma}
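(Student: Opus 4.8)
The plan is to treat this lemma as the cocategory-dual of the two preceding lemmas for (co)unital dg- and $\Ainf$-categories, running the identical argument with units replaced by counits and cokernels replaced by kernels. Throughout I use that every dg-cocategory in play is coaugmented, so that the counit $\eta_E: E \to \KK$ together with the coaugmentation $\KK \to E$ splits $E$ as a graded quiver into $E \cong \overline{E} \oplus \KK$, where $\overline{E} = \ker(\eta_E)$ carries the reduced cocomposition.

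First I would establish the bijection $Hom_{dgCoCat}(E,F) \simeq Hom_{dgCoCat_{ncu}}(\overline{E},\overline{F})$. A morphism $\phi: E \to F$ of counital dg-cocategories preserves the counit, $\eta_F \circ \phi = \eta_E$, so it carries $\overline{E} = \ker(\eta_E)$ into $\overline{F} = \ker(\eta_F)$: if $\eta_E(e) = 0$ then $\eta_F(\phi(e)) = \eta_E(e) = 0$. Since $\phi$ is compatible with the codifferentials $b$ and the cocompositions $\Delta$ by definition, its restriction $\overline{\phi}: \overline{E} \to \overline{F}$ is a morphism of non-counital dg-cocategories. For the inverse, given $\psi: \overline{E} \to \overline{F}$ I would extend it by the identity on the $\KK$-summand to $\psi \oplus \id_{\KK}: E \to F$ and verify compatibility with $b$, $\Delta$ and $\eta$. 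These two assignments are mutually inverse and natural in $E$ and $F$.

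The second bijection then follows formally from the first together with the elementary identities $\overline{(B^+)} = B$ --- the kernel of the projection $B \oplus \KK \to \KK$ is exactly $B$ --- and $(\overline{C})^+ \cong C$, which together exhibit reduction and augmentation as mutually inverse functors between $dgCoCat$ and $dgCoCat_{ncu}$. Concretely, a morphism $\psi: E' \to F'$ of non-counital dg-cocategories extends to $\psi \oplus \id_{\KK}: (E')^+ \to (F')^+$, and conversely any morphism of the augmentations preserves their counits and hence restricts, by the first part, to the reductions $\overline{(E')^+} = E'$ and $\overline{(F')^+} = F'$.

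The only nontrivial point, and the place I expect to spend care, is checking that the extension $\psi \oplus \id_{\KK}$ respects the cocomposition. This is the coalgebra-dual of the step, in the dg-category lemma, where one verifies that a non-unital dg-functor extends uniquely to a unital one compatibly with composition. It reduces to the standard decomposition of a coaugmented counital cocomposition into its reduced part and its coaugmentation terms, namely $\Delta(c) = \overline{\Delta}(c) + c \otimes 1 + 1 \otimes c$ for $c \in \overline{E}$, from which compatibility of $\psi \oplus \id_{\KK}$ with $\Delta$ follows once $\psi$ is compatible with $\overline{\Delta}$; compatibility with $b$ and with $\eta$ is then immediate.
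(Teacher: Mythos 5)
The paper offers no proof of this lemma: it is one of three companion statements (for dg-categories, $\Ainf$-categories, and dg-cocategories) recorded in Appendix A as standard facts about reduction and augmentation. So there is no argument of the author's to compare against; your writeup supplies the standard one, and under the standing assumption you declare at the outset (all cocategories coaugmented) it is essentially the right argument -- and that is exactly the setting in which the paper ever invokes the lemma, namely cocategories of the form $B^+(A)=(B(\overline{A}))^+$.

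There is, however, one step that is stated too quickly and, as written, would fail: the claim that since $\phi\colon E\to F$ commutes with $b$ and $\Delta$, its restriction $\overline{\phi}$ to $\overline{E}=\ker(\eta_E)$ is automatically a morphism of non-counital dg-cocategories. The kernel of the counit is \emph{not} closed under the cocomposition: by your own formula, for $c\in\overline{E}$ one has $\Delta_E(c)=1_E\otimes c+c\otimes 1_E+\overline{\Delta}_E(c)$, where $1_E$ denotes the coaugmentation element, and the first two terms do not lie in $\overline{E}\otimes\overline{E}$. The reduced cocomposition is therefore $\overline{\Delta}_E=(\pi_E\otimes\pi_E)\circ\Delta_E$, with $\pi$ the projection determined by the coaugmentation, and compatibility of $\overline{\phi}$ with the reduced cocompositions is equivalent to $\pi_F(\phi(1_E))=0$, i.e.\ to $\phi(1_E)=1_F$; counitality alone only gives $\eta_F(\phi(1_E))=1$. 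This is a genuine issue for the statement at the stated level of generality: taking $E'=0$, so $(E')^+=\mathbb{K}$, a counital morphism $\mathbb{K}\to (F')^+$ is the choice of a grouplike element of $(F')^+$, and if $F'$ is not conilpotent there may be grouplikes other than $1_{(F')^+}$, breaking the second bijection. The fix is either to build coaugmentation-preservation into the morphisms of $dgCoCat$ (i.e.\ work with coaugmented cocategories, which matches how these categories are used alongside [LH] and [Ly]), or to deduce it: counitality and comultiplicativity make $\phi(1_E)$ grouplike, and in a cocomplete (conilpotent) coaugmented dg-cocategory the coaugmentation is the unique grouplike, since $1_F+x$ grouplike with $x\in\overline{F}$ forces $\overline{\Delta}^{(n)}(x)=x^{\otimes(n+1)}$ for all $n$, which must vanish for $n$ large, so $x=0$. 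Since the cocategories the paper feeds into this lemma are bar constructions, hence cocomplete, either repair closes the gap; with that point made explicit, your argument is complete.
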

\medskip
\begin{defi}
The augmented bar construction is the functor
\medskip
\[
B^+: \Ainf Cat\to dgCoCat
\]
defined by
\medskip
\[
B^+(A)=(B(\overline{A}))^+
\]
The augmented cobar construction is the functor
\medskip
\[
\Omega^+: dgCoCat\to dgCat
\]
defined by
\medskip
\[
\Omega^+(C)=(\Omega (\overline{C}))^+
\]
\end{defi}
\medskip
\begin{defi}
Given an $\Ainf$-category $A$, its enveloping dg-category is the dg-category
\medskip
\[
U(A)=(\Omega B(\overline{A}))^+
\]
\end{defi}
\medskip
\begin{remark}
Given $\Ainf$-categories $A,B$, there exists natural bijections of sets
\medskip
\[
Hom_{\Ainf Cat}(A,B)\simeq Hom_{dgCoCat}(B^+(A),B^+(B))
\]
This bijection is given by composing the following chain of bijections
\medskip
\[
\begin{gathered}
Hom_{\Ainf Cat}(A,B)\simeq Hom_{\Ainf Cat_{nu}}(\overline{A},\overline{B})\simeq Hom_{dgCoCat_{ncu}}(B(\overline{A}),B(\overline{B}))\simeq \\
\simeq Hom_{dgCoCat}(B(\overline{A})^+,B(\overline{B})^+)\simeq Hom_{dgCoCat}(B^+(A),B^+(B))
\end{gathered} 
\]
\end{remark}
\medskip
\begin{remark}
The construction of the enveloping dg-category defines an adjunction
\medskip
\[
\adj{U}{\Ainf Cat}{dgCat}{i}
\]
where $i$ is the inclusion of $dgCat$ in $\Ainf Cat$. Indeed, there exists natural bijections of sets
\medskip
\[
\begin{gathered}
Hom_{dgCat}(U(A),D)\simeq Hom_{dgCat}(\Omega(B(\overline{A}))^+,D)\simeq Hom_{dgCat_{nu}}(\Omega(B(\overline{A})),\overline{D})\simeq \\
\simeq Hom_{dgCoCat_{ncu}}(B(\overline{A}),B(\overline{D}))\simeq Hom_{\Ainf Cat}(A,D)\simeq Hom_{\Ainf Cat}(A,i(D))
\end{gathered} 
\]
In particular, for $A=D$, we get a natural morphism of dg-categories 
\medskip
\[
\gamma_D:U(D)\to D
\]
corresponding to the $\Ainf$-morphism $Id_{D}$. The enveloping dg-category of a dg-category $D$ is a cofibrant dg-category in the Tabuada model structure, being the free tensor dg-category over a given graded quiver. Moreover, the morphism $\gamma_D$ is an equivalence of dg-categories.
\end{remark}

\newpage

\section{dg-bimodules and $\Ainf$-bimodules.}\label{APPB}

\subsection{Differential graded modules and bimodules.}
Given a dg-category $D$, a (unital) dg-module over $D$ is a dg-functor
\[
M: D\to Ch(\mathbb{K})
\]
Explicitly, it is given by a chain complex $M(y)$, for every $y\in Ob(D)$, and maps of degree $0$
\medskip
\[
\sigma(y_0,y_1): M(y_0) \otimes Hom^{\bullet}_D(y_0,y_1) \to M(y_1)
\]
such that 
\medskip
\[ 
\left \{
  \begin{tabular}{ccc}
  $d_{M}\circ \sigma= \sigma\circ (Id_M\otimes d_{D}+d_M\otimes Id_D)$ \\
  \\
  $\sigma(Id_M\otimes \varepsilon_D)=Id_M$
  \end{tabular}
  \right.
\]
Given dg-modules $M_0,M_1$ over a dg-category $D$, a morphism of dg-modules is a morphism of dg-functors. We denote by $Mod(D)$ the category whose objects are dg-modules over $D$ and morphisms are morphisms of dg-modules.
\medskip
\begin{example}
Given a dg-category $D$ and an object $y\in D$, there is a $D^{op}$-dg-module, called the representable dg-module associated to $y$, $h_y$ that, to an object $y_0$, associates the complex 
\medskip
\[
h^{dg}(y)(y_0)=Hom^{\bullet}_{D}(y_0,y)
\]
with differential induced by the differential of $D$ and dg-action induced by the composition in $D$.
\end{example}
\medskip
\begin{defi}
Given a dg-category $D$, the dg-category of dg-modules over it, $Mod^{\bullet}(D)$, is the dg-category whose objects are dg-modules over $D$ and morphisms of degree $d$ between two given dg-modules $r\in Hom^d_{Mod^{\bullet}(D)}(M_0,M_1)$ are given by a family of maps of degree $d$
\medskip
\[
r(y):M_0(y)\to M_1(y)
\]
such that
\medskip
\[
r\circ \sigma_{M_0}=\sigma_{M_1}\circ (Id_D\otimes r)
\]
The differential
\medskip
\[
d: Hom^d_{Mod^{\bullet}(D)}(M_0,M_1)\to Hom^{d+1}_{Mod^{\bullet}(D)}(M_0,M_1)
\]
is given by
\medskip
\[
d(r)=d_{M_1}\circ r - (-1)^{deg(r)} r\circ d_{M_0}
\]
\end{defi}
\medskip
\begin{remark}
There exists an isomorphism of dg-categories
\medskip
\[
Mod^{\bullet}(D)\xrightarrow{\simeq} dgFunc^{\bullet}(D, Ch(\mathbb{K}))
\]
and the category $Mod(D)$ is identified with $Z^0(Mod^{\bullet}(D))$. Moreover, there exists a model structure on $Mod(D)$ \cite{To} for which equivalences and fibrations are defined object-wise. Such model structure, together with the enrichment given by the dg-category $Mod^{\bullet}(D)$, makes $Mod(D)$ into a $Ch(\mathbb{K})$-enriched model category. In particular we have natural equivalences of categories
\medskip
\[
H^0(Int(Mod^{\bullet}(D)))\xrightarrow{\sim} Ho(Mod(D))
\]
where $Int(Mod^{\bullet}(D))$ is the full dg-subcategory if $Mod^{\bullet}(D)$ whose objects are fibrant and cofibrant dg-bimodules. Moreover, there exists a dg-functor, called the dg-Yoneda embedding
\medskip
\[
h^{dg}:D\to Mod^{\bullet}(D^{op})
\]
associating to each object of $D$ its representable dg-module. This dg-functor is fully-faithful. 
\end{remark}
\medskip
Given dg-categories $D,E$, a (unital) dg-bimodule $M$ over $D$ and $E$ is a dg-functor
\[
M: D\otimes E^{op}\to Ch(\mathbb{K})
\]
Explicitly, it is given by a chain complex $M(y,z)$, for every $y\in Ob(D)$ and $z\in Ob(E)$, and maps of degree $0$
\medskip
\[
\sigma(y_0,y_1,z_0,z_1): Hom^{\bullet}_E(z_1,z_0)\otimes M(y_0,z_0) \otimes Hom^{\bullet}_D(y_0,y_1) \to M(y_1,z_1)
\]
such that 
\medskip
\[ 
\left \{
  \begin{tabular}{ccc}
  $d_{M}\circ \sigma= \sigma\circ (d_D\otimes Id_M\otimes Id_E+Id_D\otimes d_M\otimes Id_E+Id_D\otimes Id_M\otimes d_E)$ \\
  \\
  $\sigma(\varepsilon_D \otimes Id_M\otimes \varepsilon_E)=Id_M$
  \end{tabular}
  \right.
\]
Given dg-bimodules $M_0,M_1$ over a dg-categories $D$ and $E$, a morphism of dg-bimodules is a morphism of dg-functors. We denote by $Mod(D,E)$ the category whose objects are dg-bimodules over $D$ and $E$ and morphisms are morphisms of dg-bimodules. There exists a canonical identification $Mod(D,E)\simeq Mod(D\otimes E^{op})$.
\medskip
\begin{defi}
Given dg-categories $D,E$, the dg-category of dg-bimodules is the dg-category $Mod^{\bullet}(D,E)$ defined by
\medskip
\[
Mod^{\bullet}(D,E)=dgFunc^{\bullet}(D\otimes E^{op}, Ch(\mathbb{K}))
\]
\end{defi}
\medskip
\begin{remark}
The category $Mod(D,E)$ is identified with $Z^0(Mod^{\bullet}(D,E))$. Moreover, there is a model structure on $Mod(D,E)$ \cite{To} for which equivalences and fibrations are defined object-wise. Such model structure, together with the enrichment given by the dg-category $Mod^{\bullet}(D,E)$, makes $Mod(D,E)$ into a $Ch(\mathbb{K})$-enriched model category. In particular we have natural equivalences
\medskip
\[
H^0(Int(Mod^{\bullet}(D,E)))\simeq Ho(Mod(D,E))
\]
where $Int(Mod^{\bullet}(D,E))$ is the full dg-subcategory if $Mod^{\bullet}(D,E)$ whose objects are fibrant and cofibrant dg-bimodules.
\end{remark}
\medskip
\begin{defi}
A dg-bimodule $M\in Mod(D,E)$ is called right quasi-representable if, for every $y\in Ob(D)$, the dg-module $M(y,-)\in Mod(E^{op})$ is weakly-equivalent to the representable dg-module $h_{z(y)}$, for some $z(y)\in Ob(E)$.
\end{defi}

\subsection{$\Ainf$-modules and bimodules. Differential graded comodules and bicomodules}
\begin{defi}
Given a (unital) $\Ainf$-category, a (untial) $\Ainf$-module over it is given by a graded quiver $M$ over $Ob(A)$ together with graded maps of degree $2-i$
\medskip
\[
m^M_i:M(y_0)\otimes Hom^{\bullet}_{A}(y_0, y_1)\otimes \dots \otimes Hom^{\bullet}_{A}(y_{i-2}, y_{i-1})\to M(y_{i-1})
\]
$i\ge 1$, such that
\medskip
\begin{equation}
\sum_{n=i+j+k} (-1)^{ik+j} m_{i+j+1}(Id^{\otimes^i}\otimes m_k \otimes Id^{\otimes^j})=0
\end{equation}
for $n\ge 1$ and where the $m_i$'s are the one defining the action on $M$ or the one given by the $\Ainf$ structure on $A$ depending on the obvious compositions compatibilities. Moreover, they satisfy the unitality conditions:
\medskip
\[
\begin{gathered}
m^M_2(Id_M\otimes \varepsilon)=Id_M \\
m^M_k(Id_M^{\otimes i}\otimes \varepsilon \otimes Id_M^{\otimes k-i-1})=0
\end{gathered} 
\]
for $0\le i\le k-1$, $k\ge 3$.
\end{defi}
\medskip
\begin{defi}
Given $\Ainf$-modules $M_0,M_1$ over an $\Ainf$-category $A$, a morphism of $\Ainf$-modules is given by a family of morphisms of degree $1-i$
\medskip
\[
f_i:M_0(y_0)\otimes Hom^{\bullet}_{A}(y_0, y_1)\otimes \dots \otimes Hom^{\bullet}_{A}(y_{i-2}, y_{i-1})\to M_1(y_{i-1})
\]
$i\ge 1$, such that
\medskip
\[
\begin{gathered}
\sum_{n=r+t+s} (-1)^{sr+t} f_{r+t+1}(Id^{\otimes^r}\otimes m^{M_0}_s \otimes Id^{\otimes^t})=\sum_{\substack{1\le r\le n\\ i_1+\dots +i_r=n}} m^{M_1}_{s+1}(f_r\otimes Id^{\otimes s} \\
f_n(Id_M \otimes \dots \otimes Id_A \otimes \varepsilon \otimes Id_A\otimes \dots Id_A)=0
\end{gathered} 
\]
for $n>1$, $1< j< n$.
\end{defi}
\medskip
\begin{defi}
Given and $\Ainf$-category, the category $Mod_{\infty}(A)$ is the category whose objects are $\Ainf$-modules over $A$ and morphisms are morphisms of $\Ainf$-modules.  
\end{defi}
\medskip
\begin{defi}
Given a coaugmented dg-cocategory $C$, a (counital) dg-comodule over $C$ is given by a graded quiver $N$ over $Ob(C)$, together with maps of degree $+1$
\medskip
\[
b_{N(y)}:N(y)\to N(y)
\]
for every $y\in Ob(C)$, and maps of degree $0$, called coaction maps,
\medskip
\[
\Delta_{N}(y_0,y_1): N(y_1)\to  N(y_0)\otimes Hom^{\bullet}_C(y_0,y_1) 
\]
such that 
\medskip
\[
\begin{gathered}
b_{M}^2=0 \\
(Id_N\otimes \Delta_C)\circ \Delta_N=(\Delta_N\otimes Id_N)\circ \Delta_N
\end{gathered} 
\]
and
\medskip
\[
\Delta_N\circ (Id_N\otimes \eta_C)=Id_N
\]
\end{defi}
\medskip
\begin{defi}
Given a coaugmented dg-cocategory $C$, the dg-category $CoMod^{\bullet}(C)$ is the dg-category whose objects are counital cocomplete dg-comodule \cite{LH} and a morphism $F$ of degree $d$ between given comodules $N_0,N_1$, is a map of graded quivers of degree $d$
\medskip
\[
F:N_0\to N_1
\]
which is compatible with the comodule structures and couints in the obvious way. The differential
\medskip
\[
d:Hom^d_{CoMod^{\bullet}(C)}(N_0,N_1)\to Hom^d_{CoMod^{\bullet}(C)}(N_0,N_1)
\]
is given by the commutator
\medskip
\[
d(F)=b_{N_1}\circ F - (-1)^d F\circ b_{N_0}
\]
\end{defi}
\medskip
\begin{defi}
Given a coaugmented dg-cocategory $C$, the category $CoMod(C)$ is the category $Z^0(CoMod^{\bullet}(C))$.
\end{defi}
\medskip
\begin{remark}
Given $A$ an $\Ainf$-category, there is a notion of weak-equivalences  (or better, a model category without limits) in the category $Mod_{\infty}(A)$ and a notion of homotopy for morphisms of $\Ainf$-modules. There are, moreover, natural equivalences
\medskip
\[
Mod_{\infty}(A)[W^{-1}]\simeq \bigslant{Mod_{\infty}(A)}{\sim}
\]
where $W$ is the class of weak-equivalences  in $Mod_{\infty}(A)$ and $\sim$ is the relation of homotopy in $Mod_{\infty}(A)$. Given a coaugmented dg-cocategory $C$, there is a model structure on $CoMod(C)$. Those notions are compatible, in the sense that there exists natural functors 
\medskip
\[
B_A: Mod_{\infty}(A)\to CoMod(B^+(A))
\]
inducing equivalences in the localizations.
\end{remark}
\medskip
\begin{defi}
Given $A$ and $\Ainf$-category, the dg-category $\mathcal{C}_{\infty}(A)$ is the dg-category whose objects are $\Ainf$-modules over $A$ and complex of morphism
\medskip
\[
Hom^{\bullet}_{\mathcal{C}_{\infty}(A)}(M_0,M_1)=Hom^{\bullet}_{CoMod^{\bullet}(B^+(A))}(B_A(M_0),B_A(M_1))
\]
\end{defi}
\medskip
\begin{remark}
By the definition of $\mathcal{C}_{\infty}(A)$, we have a dg-functor
\medskip
\[
B_A: \mathcal{C}_{\infty}(A)\to CoMod^{\bullet}(B^+(A))
\]
which is an isomorphims of dg-categories. Moreover, the dg-category $\mathcal{C}_{\infty}(A)$ computes the localization of $Mod_{\infty}(A)$ at the class of weak-equivalences, in the sense that there exists natural equivalences of categories
\medskip
\[
Mod_{\infty}(A)[W^{-1}]\xrightarrow{\sim} \bigslant{Mod_{\infty}(A)}{\sim}\xrightarrow{\sim} H^0(\mathcal{C}_{\infty}(A))
\]

Given $A$ an $\Ainf$-category, there exists an $\Ainf$-functor, called the $\Ainf$-Yoneda embedding
\medskip
\[
h^{\infty}:A\to \mathcal{C}_{\infty}(A)
\]
whose image coincides with the dg-Yoneda embedding, if $A$ is a dg-category.
\end{remark}

\subsection{Comparison between dg-bimodules and $\Ainf$-bimodules}
Most of the results that follow are taken from \cite{LH}. We need to remark that those results are stated, in their original form, for a slightly different model of the enveloping dg-category, which is given in the context of augmented dg-categories. However, those results do not depend really on the fact that the dg-categories considered are augmented and on the specific model used for the enveloping dg-category. The same results hold, slightly modifying the constructions and the proofs, with the model of the enveloping dg-category used in this paper.  

Given $A$ an $\Ainf$-category, there is a natural functor
\medskip
\[
R_{\tau}(A):Mod(U(A))\to CoMod(B^+(A))
\]
which is the right adjoint of a Quillen equivalence of model categories. Such $R_{\tau}(A)$ is defined via an acyclic twisted cochain \cite{LH}. Moreover, there is a functor
\medskip
\[
J_{A}:Mod(U(A))\to Mod_{\infty}(A)
\]
which induces equivalences in the localizations. Those functors are compatible with $B_A$, in the sense that we have a commutative diagram
\medskip
\[
 \begin{tikzpicture}
    \def\x{1.5}
    \def\y{-1.2}
    \node (A2_2) at (4*\x, 2.5*\y) {$Mod_{\infty}(A)$};
    \node (A2_1) at (2*\x, 1*\y) {$Mod(U(A))$};
    \node (A1_2) at (0*\x, 2.5*\y) {$CoMod(B^+(A))$};
   
     \path (A2_1) edge [->] node [auto,swap] {$\scriptstyle{J_{A}}$} (A2_2);
     \path (A2_1) edge [->] node [auto,swap] {$\scriptstyle{R_{\tau}(A)}$} (A1_2);
   \path (A2_2) edge [->] node [auto,swap] {$\scriptstyle{B_{A}}$} (A1_2);
          \end{tikzpicture}
\]

Analogous constructions and statements hold for the case of $\Ainf$-bimodules and dg-cobimodules. Given $\Ainf$-categories $A$ and $B$, $\mathcal{A}_{\infty}$-bimodules are the objects of category $Mod_{\infty}(A,B)$ where morphisms are maps of $\mathcal{A}_{\infty}$-bimodules \cite{LH}. Such category comes equipped with a notion of weak-equivalences , namely $\Ainf$ quasi-isomorphism of $\Ainf$-bimodules, and a notion of homotopy between morphisms of $\Ainf$-bimodules which are compatible, in the sense that there exist natural equivalences of categories
\medskip
\[
Mod_{\infty}(A,B)[W^{-1}]\simeq \bigslant{Mod_{\infty}(A,B)}{\sim}
\]
where $\sim$ is the equivalence relation induced by the notion of homotopy and $W$ is the class of $\Ainf$-quasi isomorphisms. Moreover, there is a dg-enrichment of the category $Mod_{\infty}(A,B)$, denoted by $\Cinf(A,B)$ \cite{LH} which is compatible with localization at $\Ainf$ quasi-isomorphims, giving natural equivalences of categories
\medskip
\[
Mod_{\infty}(A,B)[W^{-1}]\simeq \bigslant{Mod_{\infty}(A,B)}{\sim}\simeq H^0(\Cinf(A,B))
\]
In a similar way one can define the categories of counital cocomplete dg-bicomodules $CoMod(C,C')$ over coaugmented counital dg-cocategories $C,C'$. Such category is identified with
\medskip
\[
CoMod(C,C')\xrightarrow{\simeq} CoMod(C\otimes (C')^{op})
\]
where $C^{op}$ is the dg-cocategory with opposite cocomposition and $\otimes$ is the tensor product as dg-quivers. This identification provides $CoMod(C,C')$ with a model structure and a dg-enrichment $CoMod^{\bullet}(C,C')$ such that there exists an isomorphism of dg-categories
\medskip
\[
CoMod^{\bullet}(C,C')\xrightarrow{\simeq} CoMod^{\bullet}(C\otimes (C')^{op})
\]
In the case the dg-cocategories are the augmented bar construction of some dg-category, say $D$ and $E$, there exists a functor
\medskip
\[
B_{(D,E)}:Mod_{\infty}(D,E)\to CoMod(B^+(D),B^+(E))
\]
which induces an equivalence on the localizations
\medskip
\[
Mod_{\infty}(D,E)[W^{-1}]\xrightarrow{\sim} Ho(CoMod(B^+(D),B^+(E)))
\]
The same functor extends to an isomorphism of dg-categories
\medskip
\[
B(D,E): \Cinf(D,E)\xrightarrow{\simeq} CoMod(B^+(D),B^+(E))
\]
Also, there exists a natural functor
\medskip
\[
J_{(D,E)}: Mod(U(D),U(E))\to Mod_{\infty}(D,E)
\]
which induces natural equivalences on the localizations
\medskip
\[
Mod(U(D),U(E))[W^{-1}]\to Mod_{\infty}(D,E)[W^{-1}]
\]
In particular, for dg-categories $D$ and $E$, there exists a commutative diagram of natural functors 
\medskip
\[
 \begin{tikzpicture}
    \def\x{1.5}
    \def\y{-1.2}
    \node (A2_2) at (4*\x, 2.5*\y) {$Mod_{\infty}(D,E)$};
    \node (A2_1) at (2*\x, 1*\y) {$Mod(U(D),U(E))$};
    \node (A1_2) at (0*\x, 2.5*\y) {$CoMod(B^+(D), B^+(E))$};
   
     \path (A2_1) edge [->] node [auto] {$\scriptstyle{J(D,E)}$} (A2_2);
     \path (A2_1) edge [->] node [auto,swap] {$\scriptstyle{R_{\tau}(D,E)}$} (A1_2);
   \path (A2_2) edge [->] node [auto,swap] {$\scriptstyle{B(D,E)}$} (A1_2);
          \end{tikzpicture}
\]
where the functor $R_{\tau}(D,E)$ is defined via the identification
\medskip
\[
 \begin{tikzpicture}
    \def\x{1.5}
    \def\y{-1.2}
    \node (A2_2) at (4*\x, 2.5*\y) {$CoMod(B^+(D)\otimes B^+(E)^{op})$};
    \node (A2_1) at (4*\x, 1*\y) {$CoMod(B^+(D),B^+(E))$};
    \node (A1_2) at (0*\x, 2.5*\y) {$Mod(U(D)\otimes U(E)^{op})$};
    \node(A1_1) at (0*\x, 1*\y) {$Mod(U(D),U(E))$};
   
   \path (A1_1) edge [->] node [auto,swap] {$\scriptstyle{R_{\tau}(D,E)}$} (A2_1);
   \path (A1_1) edge [->] node [auto,swap] {$\scriptstyle{\simeq}$} (A1_2);
   \path (A1_2) edge [->] node [auto,swap] {$\scriptstyle{R_{\tau}}$} (A2_2);
    \path (A2_1) edge [->] node [auto,swap] {$\scriptstyle{\simeq}$} (A2_2);
          \end{tikzpicture}
\]
Here the functor $R_{\tau}$
\medskip
\[
R_{\tau}:Mod(U(D)\otimes U(E)^{op})\to CoMod(B^+(D)\otimes B^+(E)^{op})
\]
is defined via a twisting acyclic cochain as in \cite{LH}.

\raggedright
\newpage


\begin{thebibliography}{99}



\bibitem[Be]{Be}
   Bergner J.,
   \emph{A model category structure on the category of simplicial categories}.
   arXiv:math/0406507 [math.AT].  

\bibitem[Dr]{Dr}
   Drinfeld V.,
   \emph{DG quotients of DG categories}.
   J. Algebra, 272(2):643–691, 2004, arXiv:math/0210114 [math.KT].

	\bibitem[DK]{DK}
   Dwyer W., Kan D.,
   \emph{Simplicial localization of categories}.
   J. Pure and Appl. Algebra
17
(1980), 267–284.
 

\bibitem[Fao]{Fao}
   Faonte G.,
   \emph{Simplicial nerve of an $\Ainf$-category}.
   arXiv:1312.2127 [math.AT].

\bibitem[FMT]{FMT}
   Félix Y., Menichi L., Thomas J-C.,
   \emph{Duality in Gerstenhaber algebras}.
    	arXiv:math/0211229 [math.AT].

\bibitem[Fuk]{Fuk}
   Fukaya K.,
   \emph{Floer homology and mirror symmetry, II}.
   Minimal surfaces, geometric analysis and symplectic geometry (Baltimore, MD, 1999),  pp. 31–127 (Math. Soc. Japan, 2002).

	\bibitem[GeJo]{GeJo}
   Getzler E., Jones J.,
   \emph{Operads, homotopy algebra and iterated integrals for double loop spaces}.
    	arXiv:hep-th/9403055.  

\bibitem[Hov]{Hov}
   Hovey M.,
   \emph{Model categories}.
   volume 63 of Mathematical Surveys and Monographs. American
Mathematical Society, Providence, RI, 1999.  

	\bibitem[Jo]{Jo}
   Joyal A.,
   \emph{Quasi-categories and Kan complexes}.
   J. Pure Appl. Algebra 175 (2002) 207–222.  
   
\bibitem[Ke1]{Ke1}
   Keller B.,
   \emph{On differential graded categories}.
   International Congress of Mathematicians.
Vol. II, pages 151–190. Eur. Math. Soc., Zurich, 2006, arXiv:math/0601185 [math.KT].   

\bibitem[Ke2]{Ke2}
   Keller B.,
   \emph{$\Ainf$-algebras, modules and functor categories}.
Trends in representation
theory of algebras and related topics, volume 406 of Contemp. Math., pages 67–93.
Amer. Math. Soc., Providence, RI, 2006, arXiv:math/0510508 [math.RT].  


\bibitem[Ke3]{Ke3}
   Keller B.,
   \emph{On the cyclic homology category of exact categories}.
   J. Pure Appl. Algebra, vol. 136 (1999), no. 1, 1?56.  


\bibitem[LH]{LH}
   Lefèvre-Hasegawa K.,
   \emph{Sur les $A_{\infty}$-catégories}.
   Thèse de doctorat, Université Denis Diderot – Paris
7, November 2003, arXiv:math/0310337.

\bibitem[LHA]{LHA}
   Lurie J.,
   \emph{Higher Algebra}. Preprint, 2012 (http://www.math.harvard.edu/~lurie/papers/higheralgebra.pdf).
   
\bibitem[LGo]{LGo}
   Lurie J.,
   \emph{$(\infty,2)$-Categories and the Goodwillie Calculus I}.
    	arXiv:0905.0462 [math.CT].
   
\bibitem[LHT]{LHT}
    Lurie J.,
   \emph{Higher Topos Theory}.
   Annals of Mathematics Studies, 170. (Princeton University Press, Princeton, New Jersey, 2009), arXiv:math.CT/0608040.
   
\bibitem[Ly]{Ly}
   Lyubashenko V.,
   \emph{Category of $\Ainf$-categories}.
   Homology Homotopy Appl. 5 (2003), no. 1, 1–48.
   
\bibitem[LyMa]{LyMa}
   Lyubashenko V., Manzyuk O.,
   \emph{$\Ainf$-bimodules and Serre $\Ainf$-functors}.
    	arXiv:math/0701165 [math.CT].
   
\bibitem[Qu]{Qu}
   Quillen D.,
   \emph{Homotopical algebra}.
  No. 43 in Lecture Notes in Mathematics. Springer-Verlag,
Berlin, 1967.

\bibitem[Sei]{Sei}
   Seidel P.,
   \emph{Fukaya categories and Picard-Lefschetz theory}.
   European Mathematical Society (EMS), Zurich, 2008.
   
   \bibitem[Sta]{Sta}
  Stasheff J.D.,
  \emph{Homotopy associativity of H-spaces, II}.
  Trans. Amer. Math. Soc., 1963, vol. 108, pp. 293-312.
 
\bibitem[Tab]{Tab}
   Tabuada G.,
   \emph{Une structure de catégorie de modeles de Quillen sur la catégorie des dg-catégories}.
   C. R. Math. Acad. Sci. Paris 340 (2005), no. 1, 15–19.

\bibitem[Tam]{Tam}
   Tamarkin D.,
   \emph{What do DG categories form?}.
   Talk at the Geometric Landlands seminar, Chicago,
October 2005. 
   
\bibitem[Tr]{Tr}
   Tradler T.,
   \emph{The BV Algebra on Hochschild Cohomology Induced by Infinity Inner Products}.
    	arXiv:math/0210150 [math.QA].
  
\bibitem[To]{To}
  T\"{o}en B.,
  \emph{The homotopy theory of dg-categories and derived Morita Theory}.
 Invent. Math., Volume 167, Number 3, 2007 , 615-667.


\end{thebibliography}
\end{document}